\numberwithin{equation}{section}
\begin{document}
	
	\title{On the Morse Index of Critical Points in the Viscosity Method}
	\author{Alexis Michelat\footnote{Department of Mathematics, ETH Zentrum, CH-8093 Z\"{u}rich, Switzerland.}\setcounter{footnote}{0}}
	\date{\today}
	
	\maketitle
	
	\vspace{1.5em}
	
	\begin{abstract}
		We show that in viscous approximations of functionals defined on Finsler manifolds, it is possible to construct suitable sequences of critical points of these approximations satisfying the expected Morse index bounds as in Lazer-Solimini's theory, together with the entropy condition of Michael Struwe.
	\end{abstract}

	\tableofcontents
	\vspace{0.5cm}
	\begin{center}
		{Mathematical subject classification :
			46T05, 
		 	47J30, 
		 	58B20. 
	}
	\end{center}
	\theoremstyle{plain}
	\newtheorem*{theorem*}{Theorem}
	\newtheorem{theorem}{Theorem}[section]
	\newtheorem{lemme}[theorem]{Lemma}
	\newtheorem{propdef}[theorem]{Proposition-Definition}
	\newtheorem{prop}[theorem]{Proposition}
	\newtheorem{cor}[theorem]{Corollary}
	\theoremstyle{definition}
	\newtheorem*{definition}{Definition}
	\newtheorem*{definitions}{Definitions}
	\newtheorem{defi}[theorem]{Definition}
	\newtheorem{rem}[theorem]{Remark}
	\newtheorem*{rem*}{Remark}
	\newtheorem{remimp}[theorem]{Important remark}
	\newtheorem{rems}[theorem]{Remarks}
	\newtheorem*{rems2}{Remarks}
	\newtheorem{exemple}[theorem]{Example}
	\newcommand{\N}{\ensuremath{\mathbb{N}}}
	\parskip 1ex
	\newcommand{\vc}[3]{\overset{#2}{\underset{#3}{#1}}}
	\newcommand{\conv}[1]{\ensuremath{\underset{#1}{\longrightarrow}}}
	\newcommand{\A}{\ensuremath{\mathscr{A}}}
	\newcommand{\D}{\ensuremath{\nabla}}
	\renewcommand{\N}{\ensuremath{\mathbb{N}}}
	\newcommand{\Z}{\ensuremath{\mathbb{Z}}}
	\newcommand{\I}{\ensuremath{\mathbb{I}}}
	\newcommand{\K}{\ensuremath{\mathbb{K}}}
	\newcommand{\R}{\ensuremath{\mathbb{R}}}
	\newcommand{\W}{\ensuremath{\mathscr{W}}}
	\newcommand{\Q}{\ensuremath{\mathscr{Q}}}
	\newcommand{\C}{\ensuremath{\mathbb{C}}}
	\newcommand{\z}{\ensuremath{\bar{z}}}
	\renewcommand{\tilde}{\ensuremath{\widetilde}}
	\newcommand{\p}[1]{\ensuremath{\partial_{#1}}}
	\newcommand{\Res}{\ensuremath{\mathrm{Res}}}
	\newcommand{\lp}[2]{\ensuremath{\mathrm{L}^{#1}(#2)}}
	\renewcommand{\wp}[3]{\ensuremath{\left\Vert #1\right\Vert_{\mathrm{W}^{#2}(#3)}}}
	\newcommand{\hp}[3]{\ensuremath{\left\Vert #1\right\Vert_{\mathrm{H}^{#2}(#3)}}}
	\newcommand{\np}[3]{\ensuremath{\left\Vert #1\right\Vert_{\mathrm{L}^{#2}(#3)}}}
	\newcommand{\h}{\ensuremath{\vec{h}}}
	\renewcommand{\Re}{\ensuremath{\mathrm{Re}\,}}
	\renewcommand{\Im}{\ensuremath{\mathrm{Im}\,}}
	\newcommand{\diam}{\ensuremath{\mathrm{diam}\,}}
	\newcommand{\leb}{\ensuremath{\mathscr{L}}}
	\newcommand{\supp}{\ensuremath{\mathrm{supp}\,}}
	\renewcommand{\phi}{\ensuremath{\vec{\Phi}}}
	\newcommand{\Perp}{\ensuremath{\perp}}
	\renewcommand{\H}{\ensuremath{\vec{H}}}
	\newcommand{\norm}[1]{\ensuremath{\Vert #1\Vert}}
	\newcommand{\e}{\ensuremath{\vec{e}}}
	\newcommand{\f}{\ensuremath{\vec{f}}}
	\renewcommand{\epsilon}{\ensuremath{\varepsilon}}
	\renewcommand{\bar}{\ensuremath{\overline}}
	\newcommand{\s}[2]{\ensuremath{\langle #1,#2\rangle}}
	\newcommand{\bs}[2]{\ensuremath{\left\langle #1,#2\right\rangle}}
	\newcommand{\n}{\ensuremath{\vec{n}}}
	\newcommand{\ens}[1]{\ensuremath{\left\{ #1\right\}}}
	\newcommand{\w}{\ensuremath{\vec{w}}}
	\newcommand{\vg}{\ensuremath{\mathrm{vol}_g}}
	\renewcommand{\d}[1]{\ensuremath{\partial_{x_{#1}}}}
	\newcommand{\dg}{\ensuremath{\mathrm{div}_{g}}}
	\renewcommand{\Res}{\ensuremath{\mathrm{Res}}}
	\newcommand{\totimes}{\ensuremath{\,\dot{\otimes}\,}}
	\newcommand{\un}[2]{\ensuremath{\bigcup\limits_{#1}^{#2}}}
	\newcommand{\res}{\mathbin{\vrule height 1.6ex depth 0pt width
			0.13ex\vrule height 0.13ex depth 0pt width 1.3ex}}
	\newcommand{\ala}[5]{\ensuremath{e^{-6\lambda}\left(e^{2\lambda_{#1}}\alpha_{#2}^{#3}-\mu\alpha_{#2}^{#1}\right)\left\langle \nabla_{\vec{e}_{#4}}\vec{w},\vec{\mathbb{I}}_{#5}\right\rangle}}
	\setlength\boxtopsep{1pt}
	\setlength\boxbottomsep{1pt}
	\allowdisplaybreaks
	\newcommand*\mcup{\mathbin{\mathpalette\mcapinn\relax}}
	\newcommand*\mcapinn[2]{\vcenter{\hbox{$\mathsurround=0pt
				\ifx\displaystyle#1\textstyle\else#1\fi\bigcup$}}}
	\def\Xint#1{\mathchoice
		{\XXint\displaystyle\textstyle{#1}}%
		{\XXint\textstyle\scriptstyle{#1}}%
		{\XXint\scriptstyle\scriptscriptstyle{#1}}%
		{\XXint\scriptscriptstyle\scriptscriptstyle{#1}}%
		\!\int}
	\def\XXint#1#2#3{{\setbox0=\hbox{$#1{#2#3}{\int}$ }
			\vcenter{\hbox{$#2#3$ }}\kern-.58\wd0}}
	\def\ddashint{\Xint=}
	\newcommand{\dashint}[1]{\ensuremath{{\Xint-}_{\mkern-10mu #1}}}
	\newcommand{\xrightarrowdbl}[2][]{%
		\xrightarrow[#1]{#2}\mathrel{\mkern-14mu}\rightarrow
	}

\section{Introduction}

In this paper, we want to show that one can construct critical points of the right index depending on the dimension of the admissible min-max family in the framework of the viscosity method. Namely, we fix a $C^2$ Finsler manifold $X$ and we consider a $C^2$ function $F:X\rightarrow \R$, for which one aims at constructing (unstable) critical points. We further fix some $d$-dimensional compact manifold $M^d$  with boundary $\partial M^d= B^{d-1}\neq \varnothing$, and a continuous map $h: B^{d-1}\rightarrow X$, and we call the subset $\mathscr{A}\subset \mathscr{P}(X)$ a $d$-dimensional admissible family (relative to $(M^d,h)$) if
\begin{align*}
	\mathscr{A}=\mathscr{P}(X)\cap\ens{A: \;\text{there exists a continuous map}\;\, f:M^d\rightarrow X\;\, \text{such that}\;\, A=f(M^d)\;\, \text{and}\;\, f_{B^{d-1}}=h}.
\end{align*}
We shall generalise this example later and introduce additional min-max families in Section \ref{min-max}.
In particular, notice that $\mathscr{A}$ is stable under homeomorphisms isotopic to the identity preserving the boundary $h(B^{d-1})\subset X$.
Then the min-max level associated to $F$ and $\mathscr{A}$, denoted here by $\beta(F,\mathscr{A})$ or ($\beta(\mathscr{A})$ when there is no ambiguity in the choice of $F$)
\begin{align*}
	\beta(F,\mathscr{A})=\inf_{A\in \mathscr{A}}\sup F(A)<\infty.
\end{align*}
Assuming that the min-max is non-trivial in the following sense
\begin{align*}
	\beta(\mathscr{A})=\inf_{A\in \mathscr{A}}\sup F(A)>\sup F(h(B))=\widehat{\beta}(\mathscr{A}),
\end{align*}
this is a very classical theorem of Palais (\cite{palais}) 
that there exists a critical point $x\in K(F)$ of $F$ such that $F(x)=\beta(\mathscr{A})$, provided $F$ satisfies the celebrated Palais-Smale (PS) condition. 

Now, we assume furthermore that $X$ is a Finsler-Hilbert manifold and that the linear map $\D^2 F(x):T_xX\rightarrow T_xX$ is a Fredholm operator at every critical point $x\in K(F)$. We also define the index $\mathrm{Ind}_F(x)\in \N$ (resp. the nullity $\mathrm{Null}_F(x)$) of a critical point $x\in K(F)$ of $F$ as the number (with multiplicity) of negative eigenvalues (resp. as the multiplicity of the $0$-eigenvalue) of the Fredholm operator $\D^2 F(x):T_xX\rightarrow T_xX$.

 In this setting, it was subsequently proved by Lazer and Solimini (\cite{lazer}) that it is possible to find a critical point $x^{\ast}\in K(F)$ (\textit{a priori} different from $x$) such that we get the following index bound
\begin{align}\label{ib}
	\mathrm{Ind}_{F}(x^{\ast})\leq d.
\end{align}
For different types of min-max family, it is also possible to obtain a one-sided bound 
\begin{align*}
	\mathrm{Ind}_{F}(x^{\ast})+\mathrm{Null}_{F}(x^{\ast})\geq d
\end{align*}
or a two-sided estimate
\begin{align*}
	\mathrm{Ind}_{F}(x^{\ast})\leq d\leq \mathrm{Ind}_{F}(x^{\ast})+\mathrm{Null}_{F}(x^{\ast}).
\end{align*}
In particular, if $F$ is non-degenerate at $x$, we obtain a critical point for the third kind of families of index exactly equal to $d$ (to be defined in Section \ref{min-max}). For min-max families defined with respect to homology classes, the two-sided estimate was first obtained by Claude Viterbo (\cite{viterbo}).

Now, in the framework of the viscosity method (see \cite{geodesics} for a general introduction on the subject), the function $F$ does not satisfy the Palais-Smale condition (take for example minimal or Willmore surfaces), and one wishes to construct critical points of $F$ by approaching $F$ by a more coercive function for which we can apply the previous standard methods. We let $G:X\rightarrow \R_+$ be a $C^2$ function and we define for all $\sigma>0$ the $C^2$ function $F_{\sigma}=F+\sigma^2G$, and we assume that for all $\sigma>0$, the function $F_{\sigma}:X\rightarrow \R$ verifies the Palais-Smale condition. Furthermore, we denote for all $\sigma\geq 0$ (so that $\beta(0)=\beta(\mathscr{A})$)
\begin{align*}
	\beta(\sigma)=\beta(F_{\sigma},\mathscr{A})\geq \beta(0)=\beta(\mathscr{A}).
\end{align*}
In particular, the previous theory applies and we can find for all $\sigma>0$ a critical point $x_{\sigma}$ of $F_{\sigma}$ of the right index. Then this is a case-by-case analysis to show that the bounds carry one as $\sigma\rightarrow 0$ (see \cite{hierarchies} for minimal surfaces and \cite{morse2} for Willmore surfaces). However, the first problem which might occur (and actually the only one) is to loose energy in the approximation part, \text{i.e.} to have for some sequence $\ens{\sigma_k}_{k\in \N}\subset (0,\infty)$ converging towards $0$ and some sequence $\ens{x_k}_{k\in \N}\subset X$ of critical points associated to $\ens{F_{\sigma_k}}_{k\in \N}$ (\textit{i.e.} such that $x_k\in K(F_{\sigma_k},\beta(\sigma_k))$ for all $k\in \N$)
\begin{align*}
	F_{\sigma_k}(x_k)=\beta(\sigma_k)\conv{k\rightarrow 0}\beta(0)=\beta(\mathscr{A}),\quad \text{and}\;\, \limsup_{k\rightarrow \infty}F(x_{\sigma_k})<\beta(\mathscr{A}).
\end{align*}
There are some explicit examples of such failure (see \textit{e.g.} \cite{geodesics} for examples for geodesics and minimal surfaces), but Michael Struwe found that this  was possible to overcome this difficulty through what is called \emph{Struwe's monotonicity trick} (see \cite{struwe}, \cite{struwe2}). In our setting, the corresponding theorem is the following (see \cite{geodesics} or \cite{rivcours} for a proof).
\begin{theorem*}[$\ast$]
	Let $(X,\norm{\,\cdot\,})$ be a complete $C^1$ Finsler manifold. Let $F_{\sigma}:X\rightarrow \R$ be a family of $C^1$ functions for all $\sigma\in [0,1]$ such that for all $x\in X$, 
	\begin{align*}
		\sigma\mapsto F_{\sigma}(x)
	\end{align*}
	 is $C^1$ and increasing. Assume furthermore that there exists $C\in L^{\infty}_{\mathrm{loc}}((0,1))$, $\delta\in L^{\infty}_{\mathrm{loc}}(\R_+)$ going to $0$ at $0$, and $f\in L^{\infty}_{\mathrm{loc}}(\R)$ such that for all $0<\sigma,\tau<1$ and for all $x\in X$,
	\begin{align}\label{pseudo}
		\norm{D F_{\tau}(x)-DF_{\sigma}(x)}_x\leq C(\sigma)\delta(|\sigma-\tau|)f(F_{\sigma}(x)).
	\end{align}
	Finally, assume that for $\sigma>0$ the function $F_{\sigma}$ satisfies the Palais-Smale condition. Let $\mathscr{A}$ be an admissible family of min-max of $X$ and denote 
	\[
	\beta(\sigma)=\beta(F_{\sigma},\mathscr{A})=\inf_{A\in \mathscr{A}}\sup F_{\sigma}(A).
	\]
	Then there exists a sequence $\ens{\sigma_k}_{k\in \N}\subset (0,\infty)$ and $\ens{x_k}_{k\in \N}\subset X$ such that 
	\begin{align*}
		F_{\sigma_k}(x_k)=\beta(\sigma_k),\quad DF_{\sigma_k}(x_k)=0.
	\end{align*}
	Furthermore, for all $k\in \N$, the critical point $x_k$ satisfies the following \emph{entropy condition}
	\begin{align}\label{boltzmann}
		\p{\sigma_k}F_{\sigma_k}(x_k)\leq \frac{1}{\sigma_k\log\left(\frac{1}{\sigma_k}\right)\log\log\left(\frac{1}{\sigma_k}\right)}.
	\end{align}
\end{theorem*}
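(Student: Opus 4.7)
The strategy is to combine the monotonicity of $\sigma\mapsto\beta(\sigma)$ with Struwe's trick and an entropy selection of parameters where $\beta'$ is small. Since $\sigma\mapsto F_\sigma(x)$ is increasing for every fixed $x\in X$, passing to the supremum over $A\in\mathscr{A}$ shows that $\sigma\mapsto\beta(\sigma)$ is non-decreasing on $(0,1)$. By Lebesgue's theorem $\beta$ is differentiable almost everywhere, and on every compact sub-interval $\int_a^b\beta'(\sigma)\,d\sigma\leq\beta(b)-\beta(a)<\infty$, so $\beta'\in L^1_{\mathrm{loc}}(0,1)$.

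Next, at every point $\sigma\in(0,1)$ where $\beta'(\sigma)$ exists, I would apply the monotonicity trick to produce a critical point $x_\sigma\in K(F_\sigma,\beta(\sigma))$ with controlled $\partial_\sigma F_\sigma(x_\sigma)$. Fix a sequence $\sigma_n\nearrow\sigma$ together with almost-minimizing sets $A_n\in\mathscr{A}$ satisfying $\sup F_{\sigma_n}(A_n)\leq\beta(\sigma_n)+(\sigma-\sigma_n)^2$. For any $x\in A_n$ whose $F_\sigma$-value lies within $(\sigma-\sigma_n)^2$ of $\sup F_\sigma(A_n)\leq\beta(\sigma)+(\sigma-\sigma_n)^2$, the non-negative increment $F_\sigma(x)-F_{\sigma_n}(x)$ is bounded above by $\beta(\sigma)-\beta(\sigma_n)+O((\sigma-\sigma_n)^2)$; dividing by $\sigma-\sigma_n$ and sending $n\to\infty$ yields
\[
\partial_\sigma F_\sigma(x)\leq 2\bigl(\beta'(\sigma)+1\bigr).
\]
The pseudo-gradient estimate \eqref{pseudo} is then used to build a pseudo-gradient vector field for $F_\sigma$ that deforms the $A_n$ in a controlled way; combined with the Palais-Smale condition for $F_\sigma$, this produces a critical point $x_\sigma$ at level $\beta(\sigma)$ inheriting the same pointwise bound on $\partial_\sigma F_\sigma(x_\sigma)$.

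To finish, I would perform the entropy selection. Set $\phi(\sigma):=\bigl(\sigma\log(1/\sigma)\log\log(1/\sigma)\bigr)^{-1}$; the substitution $u=\log\log(1/\sigma)$ yields $\int_0^{\sigma_0}\phi(\sigma)\,d\sigma=+\infty$ for every $\sigma_0\in(0,e^{-e})$, whereas $\int_0^{\sigma_0}\beta'(\sigma)\,d\sigma<\infty$. Consequently the set $E=\{\sigma\in(0,\sigma_0):2(\beta'(\sigma)+1)\leq\phi(\sigma)\}$ must accumulate at $0$, for otherwise the reverse inequality would hold on a full neighborhood of $0$, forcing $\int\beta'=\infty$. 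Any sequence $\sigma_k\to 0$ inside $E$, together with the corresponding $x_k:=x_{\sigma_k}$ from the previous step, satisfies the entropy condition \eqref{boltzmann}.

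The main obstacle lies in the second paragraph: promoting the almost-minimizing family $(A_n)$ to a genuine critical point of $F_\sigma$ at level $\beta(\sigma)$ while preserving the quantitative control on $\partial_\sigma F_\sigma$. This requires a pseudo-gradient deformation argument adapted to the $C^1$ Finsler setting, and hypothesis \eqref{pseudo} is essential here: it provides the uniform interpolation between $DF_\tau$ and $DF_\sigma$ needed for a single flow to be simultaneously effective for all parameters in a small neighborhood of $\sigma$, and without it the bound on $\partial_\sigma F_\sigma(x)$ could be lost when passing from the approximating sequence to its limiting critical point.
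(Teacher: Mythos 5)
Your overall architecture --- monotonicity of $\beta$, almost-everywhere differentiability, Struwe's trick at a differentiability point, and an entropy selection of parameters playing the divergence of $\int \sigma^{-1}\left(\log\tfrac{1}{\sigma}\right)^{-1}\left(\log\log\tfrac{1}{\sigma}\right)^{-1}d\sigma$ against the convergence of $\int\beta'$ --- is the same as the paper's (the paper defers the proof of Theorem $(\ast)$ to its references, but reproduces exactly this scheme in Section 4.1 and in Theorem \ref{real}). However, the central step of your second paragraph is wrong as written: you approach $\sigma$ \emph{from below}, taking $\sigma_n\nearrow\sigma$ with $A_n$ near-optimal for $F_{\sigma_n}$, and then assert $\sup F_\sigma(A_n)\leq\beta(\sigma)+(\sigma-\sigma_n)^2$. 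This does not follow: since $F_{\sigma}\geq F_{\sigma_n}$, near-optimality of $A_n$ for $F_{\sigma_n}$ gives no upper bound whatsoever on $\sup F_\sigma(A_n)$ --- the excess $F_\sigma-F_{\sigma_n}$ on $A_n$ is precisely the quantity that is not uniformly controlled, and controlling it is the whole purpose of the monotonicity trick. Moreover, with the hypotheses you actually state ($F_\sigma(x)$ within $(\sigma-\sigma_n)^2$ of $\sup F_\sigma(A_n)\geq\beta(\sigma)$, and $F_{\sigma_n}(x)\leq\sup F_{\sigma_n}(A_n)\leq\beta(\sigma_n)+(\sigma-\sigma_n)^2$), the chain of inequalities yields a \emph{lower} bound $F_\sigma(x)-F_{\sigma_n}(x)\geq\beta(\sigma)-\beta(\sigma_n)-2(\sigma-\sigma_n)^2$, not the upper bound you claim. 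The correct argument (Theorem \ref{real} of the paper) takes $\sigma_k\searrow\sigma$ from \emph{above}: then $F_\sigma\leq F_{\sigma_k}$ makes a near-optimal sweepout for $F_{\sigma_k}$ automatically near-optimal for $F_\sigma$, and for $x\in A_k$ with $F_\sigma(x)\geq\beta(\sigma)-(\sigma_k-\sigma)$ one obtains $F_{\sigma_k}(x)-F_\sigma(x)\leq\beta(\sigma_k)-\beta(\sigma)+2(\sigma_k-\sigma)$, whence the difference quotient is bounded by $\beta'(\sigma)+2+o(1)$.

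A second, related issue: even granting a bound on the difference quotient, passing to $\partial_\sigma F_\sigma(x)$ is not automatic for a general $C^1$ increasing family, and in any case the point $x=x_n$ moves with $n$, so one cannot simply ``send $n\to\infty$''. The passage works for the quotient taken from above when $\sigma\mapsto F_\sigma(x)$ is convex (as for $F+\sigma^2G$ with $G\geq 0$, where $(F_{\sigma_k}(x)-F_\sigma(x))/(\sigma_k-\sigma)=(\sigma_k+\sigma)G(x)\geq\partial_\sigma F_\sigma(x)$ for each fixed $x$); for the quotient taken from below, convexity gives the \emph{opposite} inequality, so your orientation would lose the derivative bound even if the quotient bound were established. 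Finally, the deformation step that you yourself flag as ``the main obstacle'' --- converting the localized almost-critical sequence into a genuine critical point via a pseudo-gradient flow, using \eqref{pseudo} and the Palais-Smale condition --- is only gestured at; but the decisive flaw is the direction of the approximation.
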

Now, one would like to merge the index bound of Lazer and Solimini with Struwe's monotonicity trick, which requires a new argument (we refer to Section \ref{min-max} for the definitions of index, nullity and of the different types of min-max families).

\begin{theorem}\label{indexbound}
	Let $(X,\norm{\,\cdot\,}_X)$ be a $C^2$ Finsler manifold modelled on a Banach space $E$, and $Y\hookrightarrow X$ be a $C^2$ Finsler-Hilbert manifold modelled on a Hilbert space $H$ which we suppose locally Lipschitz embedded in $X$, 
	and let $F,G\in C^2(X,\R_+)$ be two fixed functions. Define for all $\sigma>0$, $F_\sigma=F+\sigma^2G\in C^2(X,\R_+)$ and suppose that the following conditions hold.
	\begin{enumerate}
		\item[\emph{(1)}] \emph{\textbf{Palais-Smale condition:}} For all $\sigma>0$, the function $F_{\sigma}:X\rightarrow Y$ satisfies the Palais-Smale condition at all positive level $c>0$.
		\item[\emph{(2)}] \emph{\textbf{Energy bound:}} The following energy bound condition holds : for all $\sigma>0$ and for all $\ens{x_k}_{k\in \N}\subset X$ such that 
		\begin{align*}
			\sup_{k\in \N}F_{\sigma}(x_k)<\infty,
		\end{align*}
		we have
		\begin{align*}
			\sup_{k\in \N}\norm{\D G(x_k)}<\infty.
	\end{align*}
	\item[\emph{(3)}] \textbf{\emph{Fredholm property:}} For all $\sigma>0$ and for all $x\in K(F_{\sigma})$, we have $x\in Y$, and the second derivative $D^2F_{\sigma}(x):T_xX\rightarrow T_x^{\ast}X$ restrict on the Hilbert space $T_xY$ such that the linear map $\D^2F_{\sigma}(x)\in \mathscr{L}(T_yY)$ defined by
	\begin{align*}
	D^2F_{\sigma}(x)(v,v)=\s{\D^2F_{\sigma}(x)v}{v}_{Y,x},\quad \text{for all}\;\, v\in T_xY,
	\end{align*}
	is a \emph{Fredholm} operator, and the embedding $T_xY\hookrightarrow T_xX$ is dense for the Finsler  norm $\norm{\,\cdot\,}_{X,x}$.
	\end{enumerate}
	Now, let $\mathscr{A}$ \emph{(}resp. $\mathscr{A}^{\ast}$, resp. $\bar{\mathscr{A}}$, resp. $\mathscr{A}(\alpha_{\ast})$, resp. $\mathscr{A}(\alpha^{\ast})$, where the last two families depend respectively on a homology class $\alpha_{\ast}\in H_d(Y,B)$ - where $B\subset Y$ is a fixed compact subset -  and a cohomology class $\alpha^{\ast}\in H^d  (Y)$\emph{)} be a $d$-dimensional admissible family of $Y$ (resp. a $d$-dimension dual family to $\mathscr{A}$, resp. a $d$-dimensional co-dual family to $\mathscr{A}$, resp. a $d$-dimensional homological family, resp. a $d$-dimensional co-homological family) with boundary $\ens{C_i}_{i\in I}\subset Y$.
	Define for all $\sigma>0$
	\begin{align*}
	\begin{alignedat}{3}
	&\beta(\sigma)=\inf_{A\in\mathscr{A}}\sup F_\sigma(A)<\infty,\quad &&
	\beta^{\ast}(\sigma)=\inf_{A\in \mathscr{A}^{\ast}}\sup F_{\sigma}(A),
	\quad&&\widetilde{\beta}(\sigma)=\inf_{A\in \widetilde{\mathscr{A}}}\sup F_{\sigma}(A)\\
	&\bar{\beta}(\sigma)=\inf_{A\in \mathscr{A}(\alpha_{\ast})}\sup F_{\sigma}(A),\quad&& 
	\underline{\beta}(\sigma)=\inf_{A\in \mathscr{A}(\alpha^{\ast})}\sup F_{\sigma}(A).
	\end{alignedat}
	\end{align*} 
	Assuming that the min-max value is non-trivial, \textit{i.e.}
	\begin{enumerate}\label{non-trivial}
		\item[\emph{(4)}] \textbf{\emph{Non-trivialilty:}} $\displaystyle \beta_0=\inf_{A\in \mathscr{A}}\sup F(A)>\sup_{i\in I} \sup F(C_i)=\widehat{\beta}_0$, 
	\end{enumerate}
	there exists a sequence $\ens{\sigma_k}_{k\in \N}\subset (0,\infty)$ such that $\sigma_k\conv{k\rightarrow \infty}0$, and for all $k\in \N$, there exists a critical point $x_k\in K(F_{\sigma_k})\in \mathscr{E}(\sigma_k)$ \emph{(}resp. $x_k^{\ast},\widetilde{x}_k,\bar{x}_{k},\underline{x}_k\in \mathscr{E}(\sigma_k)$\emph{)} of $F_{\sigma_k}$ satisfying the entropy condition \eqref{boltzmann} and such that respectively
	\begin{align*}
	\left\{
	\begin{alignedat}{2}
	&F_{\sigma_k}(x_k)=\beta(\sigma_k),\quad&&
	\mathrm{Ind}_{F_{\sigma_k}}(x_k)\leq d\\
	&F_{\sigma_k}(x_k^{\ast})=\beta^{\ast}(\sigma_k),\quad&&
	\mathrm{Ind}_{F_{\sigma_k}}(x_{k})\geq d		\\
	&F_{\sigma_k}(\widetilde{x}_k)=\widetilde{\beta}(\sigma_k),\quad&&
	\mathrm{Ind}_{F_{\sigma_k}}(\widetilde{x}_k)\leq d\leq \mathrm{Ind}_{F_{\sigma_k}}(\widetilde{x}_k)+\mathrm{Null}_{F_{\sigma_k}}(\widetilde{x}_k)
	\\
	&F_{\sigma_k}(\bar{x}_k)=\bar{\beta}(\sigma_k),\quad&&
	\mathrm{Ind}_{F_{\sigma_k}}(\bar{x}_k)\leq d\leq \mathrm{Ind}_{F_{\sigma_k}}(\bar{x}_k)+\mathrm{Null}_{F_{\sigma_k}}(\bar{x}_k)
	\\
	&F_{\sigma_k}(\underline{x}_k)=\underline{\beta}(\sigma_k),\quad&&
	\mathrm{Ind}_{F_{\sigma_k}}(\underline{x}_k)\leq d\leq \mathrm{Ind}_{F_{\sigma_k}}(\underline{x}_k)+\mathrm{Null}_{F_{\sigma_k}}(\underline{x}_k).
	\end{alignedat}\right.
	\end{align*}
\end{theorem}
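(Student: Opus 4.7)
The plan is to interleave Struwe's monotonicity trick with the Lazer--Solimini / Viterbo index-bound machinery, carrying out a single abstract deformation on almost-minimising families that simultaneously secures the entropy estimate \eqref{boltzmann} and the Morse-index bounds. Since $G\geq 0$, each of the five min-max functions
\[
\sigma\mapsto \beta(\sigma),\ \beta^{\ast}(\sigma),\ \widetilde{\beta}(\sigma),\ \bar{\beta}(\sigma),\ \underline{\beta}(\sigma)
\]
is non-decreasing on $(0,1)$, hence differentiable almost everywhere with locally integrable derivatives. Because the density $1/\bigl(\sigma\log(1/\sigma)\log\log(1/\sigma)\bigr)$ has divergent integral at $0$, a pigeonhole argument in the Hardy--Littlewood spirit yields a sequence $\sigma_k\downarrow 0$ at which each of the five functions is simultaneously differentiable and at which each derivative is bounded by $C/\bigl(\sigma_k\log(1/\sigma_k)\log\log(1/\sigma_k)\bigr)$. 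This is the only analytic input needed to upgrade an ordinary critical point at level $\beta(\sigma_k)$ to one satisfying \eqref{boltzmann}.

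At each such $\sigma_k$, and for any $\epsilon_n\downarrow 0$, the Lebesgue differentiability together with the monotonicity of $\sigma\mapsto F_\sigma(x)$ produces, exactly as in the proof of Theorem $(\ast)$, an almost-minimising family $A_n$ satisfying
\[
\sup_{x\in A_n\cap\{F_{\sigma_k}>\beta(\sigma_k)-\epsilon_n\}}\partial_\sigma F_{\sigma_k}(x)\ \leq\ \beta'(\sigma_k)+o(1).
\]
This is the family on which I would run a Lazer--Solimini / Viterbo deformation, with support confined to the upper strip $\{|F_{\sigma_k}-\beta(\sigma_k)|<\delta\}$ and with velocity given by a pseudo-gradient of $F_{\sigma_k}$ adapted to the spectral decomposition of the Fredholm Hessian $\D^2F_{\sigma_k}$: flowing along the negative eigenspace forces $\mathrm{Ind}\leq d$, and trapping the family in the non-negative eigenspace via a linking partner forces $\mathrm{Ind}+\mathrm{Null}\geq d$. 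Because the deformation is supported in the upper strip and $F_{\sigma_k}$ decreases along the flow, the pointwise upper bound on $\partial_\sigma F_{\sigma_k}$ derived above propagates to the deformed family. A diagonal extraction $n\to\infty$ at each fixed $\sigma_k$ and the Palais--Smale hypothesis (1) then deliver a critical point $x_k$ of $F_{\sigma_k}$ with the correct index bound and with the entropy inequality intact.

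The main obstacle, and the reason this does not reduce to a direct concatenation of \cite{struwe} and \cite{lazer}, is the mismatch of functional settings. The min-max structure and the entropy estimate live on the Banach Finsler manifold $X$, whereas the Hessian in hypothesis (3) and the spectral decomposition underlying any Lazer--Solimini-type flow live on the Hilbert submanifold $Y$. Hypothesis (3) guarantees that critical points actually belong to $Y$ and that $\D^2F_{\sigma_k}$ is Fredholm there, while the locally Lipschitz embedding $Y\hookrightarrow X$ is what allows the flow, built using Kato-type stability of the negative spectral projector in $T_xY$, to be transported to an admissible deformation of $A_n\subset X$ without breaking the ambient family structure. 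Once this transport is in place, the specific topological device detecting the index bound for each family (the classical Lazer--Solimini pseudo-index for $\mathscr{A}$ and $\mathscr{A}^{\ast}$, Viterbo's two-sided bound for the dual family, and (co)homological linking for $\mathscr{A}(\alpha_{\ast})$ and $\mathscr{A}(\alpha^{\ast})$) slots into the common deformation framework without further modification.
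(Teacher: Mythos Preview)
Your outline captures the right high-level picture, but it omits the step that the paper identifies as the genuinely new difficulty: the degenerate case. The Lazer--Solimini/Viterbo machinery you invoke rests on the Morse lemma (see the paper's Lemma \ref{lazer-solimini} and Propositions \ref{ext}, \ref{vietoris}); it requires each critical point at level $\beta(\sigma_k)$ to be non-degenerate so that one can set up the local cells $C(r_1,r_2)$ and perform the surgery/extension argument. Your ``Kato-type stability of the negative spectral projector'' is exactly what fails when $0\in\mathrm{Sp}(\D^2F_{\sigma_k}(x))$: the projector is discontinuous across the kernel, and there is no well-defined Morse chart. The paper's own proof handles this by a Marino--Prodi perturbation $F_{\sigma,y}=F_\sigma+\varphi_\sigma\langle y,\cdot\rangle$ (Proposition \ref{marinoprodi}) together with Proposition \ref{propre} to recover properness, and then runs the non-degenerate argument (Theorem \ref{ndeg}) for the perturbed functional.

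The subtle point---and the reason this is not just a concatenation of \cite{struwe} and \cite{lazer}---is that the perturbation alters $\beta(\sigma)$ and hence the entropy machinery. The paper deals with this by covering each interval $I_j=[a_{j+1},a_j]$ by finitely many balls $B(\sigma_i,\delta(\sigma_i))$ on which a single cut-off $\varphi_{\sigma_i}$ works, choosing the perturbation size $\epsilon\leq b_j/(N_j+1)$ so that $\int_{I_j}\beta'(\sigma,y)\,d\sigma$ stays controlled, and then re-running the entropy/measure argument for $\beta(\,\cdot\,,y)$ to find a good $\sigma(y)\in I_j$. Only after this does one let $y\to 0$ and use semi-continuity of index and nullity (the inequalities \eqref{1sided}--\eqref{2sided}) to pass the bound to a critical point of the original $F_{\sigma_\infty^j}$. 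Your proposal contains no analogue of this step; in particular, nothing in your argument guarantees that the critical point you produce has finite (let alone controlled) index if it happens to be degenerate.

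A secondary point: your mechanism for propagating the entropy bound---a pointwise upper bound on $\partial_\sigma F_{\sigma_k}$ along $A_n$---is not how the paper does it, and it is not obviously robust under the Lazer--Solimini surgery, which \emph{replaces} pieces of $A_n$ by new points in $\varphi^{-1}(\partial B_-(0,r_1))$ rather than flowing them. The paper instead uses the squeeze $\beta(\sigma)-(\sigma_k-\sigma)\leq F_\sigma(x_k)\leq F_{\sigma_k}(x_k)\leq\beta(\sigma_k)+(\sigma_k-\sigma)$ from Theorem \ref{real}, which survives because the surgery only decreases $\sup F_{\sigma_k}(A)$.
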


\begin{rem}
	The previous theorem is stated for a family $F_{\sigma}=F+\sigma^2G$, but it would hold more generally under the hypothesis of the previous Theorem $(\ast)$. Notice that the \textbf{Energy Bound} is nothing else that the bound of Theorem $(\ast)$. 
\end{rem}

\begin{rems}(On  the optimality of the hypothesis of Theorem \ref{indexbound}.)
	
	Firstly, the \textbf{Palais-Smale condition} might be weakened to the Palais-Smale condition along certain near-optimal sequence (see \cite{ghoussoub2}). However, the sequence $\ens{\sigma_k}_{k\in \N}$ given by the theorem cannot be made explicit, as is depends on differentiability property of $\sigma\mapsto\beta(\sigma)$ (actually, of certain approximations of this function), a function which is \emph{a priori} impossible to determine explicitly for all $\sigma> 0$ (determining $\beta(0)$ is already a very non-trivial question in many examples, and is actually one of the motivations of the viscosity method), so hypothesis $(1)$ is nearly optimal. 
	
	Secondly, the \textbf{Energy bound} is a mere restatement of inequality \eqref{pseudo}, which is really necessary to make the pseudo-gradient argument work (see \cite{geodesics}). It seems to be essentially the only way to obtain Palais-Smale min-max principle.
	
	Thirdly, the restriction on the Hilbert space is used to take advantage of the Morse lemma, a necessary tool in all classical references (\cite{lazer}, \cite{viterbo}, \cite{solimini}, \cite{ghoussoub}, \cite{ghoussoub2}). The \textbf{Fredholm property} is probably necessary as all existing methods rely on perturbation methods using the Sard-Smale theorem (\cite{smale}), for which the Fredholm hypothesis is necessary, thanks of the counter-example of Kupka (\cite{kupka}). Furthermore, we have to make the hypothesis that $T_xY$ be dense in $T_xX$ for a critical point $x\in K(F_{\sigma})$ as it shows that the index does not change for the restriction $\D^2F_{\sigma}(x)\in \mathscr{L}(T_xY)$.
	
	Finally, the \textbf{Non-triviality} assumption is to our knowledge necessary. Indeed, as we cannot localise the critical points of the right index as in the works of Solimini (\cite{solimini}) and Ghoussoub (\cite{ghoussoub}, \cite{ghoussoub2}), the corresponding theorem is Corollary $10.5$ in \cite{ghoussoub}, where this hypothesis is made in order to make sure that one can apply the deformation lemma. Once again, this step is the same that permits to prove the Palais-Smale min-max principle.
\end{rems}

\subsection{Examples of admissible families}

	We remark that the different families introduced above allow one to recover all known types of min-max considered by Palais in \cite{palais2}. The only case to check are the homotopy classes of mappings.
	 Let $M^d$ be a smooth manifold and let $c$ a regular homotopy class of immersions of $\Sigma^k$ into $X$, or an isotopy class of embeddings of $M^d$ into $X$. Then
	\begin{align*}
		\mathscr{A}(c)=\ens{f(M^d): f\in c}
	\end{align*}
	is ambient isotopy invariant so is an admissible family of dimension $d$, \textit{i.e.} one may freely has additional constraints in the definition of the admissible families as long as they stable under homeomorphisms isotopic to the identity (preserving the boundary conditions, if any). In particular, if $\Sigma^k, N^n$ are two smooth manifolds, $\mathrm{Imm}(\Sigma^k,N^n)$ is the set of \emph{smooth} immersions from $\Sigma^k$ to $N^n$, and $d\in \N$ is such that
	\begin{align*}
		\pi_d\left(\mathrm{Imm}(\Sigma^k,N^n)\right)\neq \ens{0},
	\end{align*}
	where $\pi_d$ designs the $d$-th regular homotopy group, then for all $c\in \pi_d\left(\mathrm{Imm}(\Sigma^k,N^n)\right)$ with $c\neq 0$, and for all $l\in \N$ and $1\leq p<\infty$ such that $lp>k$, as the following Sobolev space of immersion is a smooth Banach manifold (\cite{rivcours})
	\begin{align*}
		\mathrm{Imm}_{l,p}(\Sigma^k,N^n)=W^{l,p}(\Sigma^k,N^n)\cap\ens{\phi:d\phi(p)\wedge d\phi(p)\neq 0\;\,\text{for all}\;\, p\in \Sigma^k},
	\end{align*}
	we deduce that
	\begin{align*}
		\mathscr{A}(c)=\mathscr{P}(\mathrm{Imm}_{l,p}(\Sigma^k,N^n))\cap\ens{\phi(S^d): \phi\in C^0(S^d,\mathrm{Imm}_{l,p}(\Sigma^k,N^n)), [\phi]=c}
	\end{align*}
	is a $d$-dimensional min-max family of $\mathrm{Imm}_{l,p}(\Sigma^k,N^n)$.

\subsection{Applications}\label{appli}

\textbf{Sacks-Uhlenbeck $\alpha$-energies (\cite{sacks}).} Let $\Sigma$ be a closed Riemann surfaces  and let $(M^n,h)$ be a closed Riemannian manifold which we suppose isometrically embedded in some Euclidean space $\R^N$, and define for all $\sigma\geq 0$ the family of Banach spaces
\begin{align*}
	X_{\sigma}&=W^{1,1+\sigma}(\Sigma,M^n)=W^{1,1+\sigma}(\Sigma,\R^N)\cap\ens{\vec{u}: \vec{u}(p)\in M^n\;\,\text{for a.e.}\;\, p\in \Sigma}.\\
	Y&=W^{2,2}(\Sigma,M^n).
\end{align*}
One can check that also $X_{\sigma}$ depends on $\sigma$, as $Y$ is independent of $\sigma$, the proof of Theorem \ref{indexbound} is still valid. The function $F_{\sigma}:X_{\sigma}\rightarrow \R$ is given by
\begin{align*}
	F_{\sigma}(\vec{u})=\frac{1}{2}\int_{\Sigma}\left(\left(1+|d\vec{u}|^2\right)^{1+\sigma}-1\right)d\mathrm{vol}_{g_0}
\end{align*}
where $g_0$ is some fixed smooth metric on $\Sigma$.

The significance of the restriction on the Hilbert space $Y$ is given by the following regularity result (see \cite{moore}).

\begin{theorem*}
	If $0<\sigma<1/2$, any critical point $\vec{u}\in X_{\sigma}$ of $F_{\sigma}$ is smooth. Furthermore, for all $0<\sigma<1/2$, and all critical point $\vec{u}\in K(F_{\sigma})$, the restriction $D^2F_{\sigma}(\vec{u}):T_{\vec{u}}Y\rightarrow T_{\vec{u}}^{\ast}Y$ is a Fredholm operator.
\end{theorem*}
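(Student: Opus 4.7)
The plan splits into two independent parts. For smoothness, I would first derive the Euler--Lagrange equation satisfied by a constrained critical point $\vec{u} \in W^{1,2(1+\sigma)}(\Sigma, M^n)$: testing $DF_\sigma(\vec{u})V = 0$ against arbitrary $V \in C^\infty(\Sigma, \R^N)$ with $V(p) \in T_{\vec{u}(p)}M^n$, and splitting off the normal component via the second fundamental form $A$ of $M^n \hookrightarrow \R^N$, yields the quasilinear system
\[
-\dive_{g_0}\bigl((1+|d\vec{u}|^2)^\sigma\, d\vec{u}\bigr) = (1+|d\vec{u}|^2)^\sigma\, A(\vec{u})(d\vec{u}, d\vec{u}).
\]
Since the principal symbol at a covector $\xi$ is bounded below by $|\xi|^2$, this is uniformly elliptic for every $\sigma>0$. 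Since $\dim\Sigma = 2$ and $2(1+\sigma)>2$, Morrey's embedding gives $\vec{u}\in C^{0,\alpha}$ with $\alpha=\sigma/(1+\sigma)$.

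The next step is to promote this to $\vec{u}\in C^{1,\alpha}$ via the Sacks--Uhlenbeck / Ladyzhenskaya--Uraltseva regularity theory for quasilinear systems; the restriction $\sigma<1/2$ is used here to control the borderline right-hand side $(1+|d\vec{u}|^2)^\sigma A(\vec{u})(d\vec{u},d\vec{u})$ by the energy, after a suitable Uhlenbeck-type gauge choice on the normal component of the Euler--Lagrange equation. Once the coefficients of the linearization are H\"older, iterated Schauder estimates applied to the equation and its differentiated versions bootstrap $\vec{u}$ to $C^{k,\alpha}$ for every $k\in\N$, hence to $C^\infty$. For the Fredholm claim, I compute the second variation at the now-smooth $\vec{u}$ along arbitrary $V,W\in T_{\vec{u}}Y = \{V\in W^{2,2}(\Sigma,\R^N): V(p)\in T_{\vec{u}(p)}M^n\;\text{a.e.}\}$. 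Using a geodesic variation in $M^n$ to eliminate the extrinsic Hessian of the constraint, the bilinear form reads
\[
D^2F_\sigma(\vec{u})(V,W) = \int_\Sigma \Bigl[\, a_\sigma\s{dV}{dW} + b_\sigma\s{d\vec{u}}{dV}\s{d\vec{u}}{dW} + \s{\mathcal{R}(\vec{u})V}{W}\,\Bigr]\vol{g_0},
\]
with $a_\sigma=(1+\sigma)(1+|d\vec{u}|^2)^\sigma$, $b_\sigma=2\sigma(1+\sigma)(1+|d\vec{u}|^2)^{\sigma-1}$, and $\mathcal{R}(\vec{u})$ a smooth zeroth-order tensor built from $A(\vec{u})$ and $d\vec{u}$. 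Integrating by parts, this form is represented by a second-order formally self-adjoint operator $L_\sigma$ on sections of the smooth bundle $\vec{u}^*TM^n$, with smooth coefficients and positive-definite principal symbol.

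On the closed Riemann surface $\Sigma$, the Agmon--Douglis--Nirenberg theorem shows that $L_\sigma$ is Fredholm of index zero between the appropriate Sobolev completions. Transferring this to the prescribed form $\D^2F_\sigma(\vec{u})\in \mathscr{L}(T_{\vec{u}}Y)$ amounts to composing $L_\sigma$ with the isomorphism associated to the Hilbert--Riesz identification of $T_{\vec{u}}^*Y$ with $T_{\vec{u}}Y$ and with the orthogonal projection $W^{2,2}(\Sigma,\R^N)\to W^{2,2}(\vec{u}^*TM^n)$ (smooth because $\vec{u}$ is smooth); neither alters Fredholmness. The main obstacle is the first step of $C^{1,\alpha}$ regularity for the quasilinear system with the quadratic-in-gradient nonlinearity $A(\vec{u})(d\vec{u},d\vec{u})$ on the right-hand side, where the restriction $\sigma<1/2$ is crucial: this is the hard analytic content of the Sacks--Uhlenbeck bootstrap, and once it is in place the Fredholm assertion follows from standard elliptic theory.
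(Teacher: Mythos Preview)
The paper does not actually prove this statement: it is quoted as a known regularity result with the reference ``(see \cite{moore})'' and no argument is given. So there is no proof in the paper to compare your attempt against.

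That said, your outline is broadly the standard route one would follow, and it matches what is done in the Sacks--Uhlenbeck and Moore references: derive the quasilinear elliptic Euler--Lagrange system, use that $2(1+\sigma)>2$ in dimension two to get initial H\"older continuity, bootstrap via the $C^{1,\alpha}$ estimates for $\alpha$-harmonic maps, and then invoke standard elliptic theory on the closed surface $\Sigma$ to obtain smoothness and the Fredholm property of the second variation operator. One point to be careful about: you attribute the restriction $\sigma<1/2$ to the $C^{1,\alpha}$ step, but in fact the Sacks--Uhlenbeck regularity theory gives smoothness of critical points for every $\sigma>0$ (equivalently every $\alpha>1$); the condition $\sigma<1/2$ in the statement is not the threshold for regularity of the Euler--Lagrange equation itself. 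If you want to pin down where a smallness condition on $\sigma$ genuinely enters, you should consult the cited reference rather than guess at its role in the bootstrap.
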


In particular, such critical point $u\in X_{\sigma}$ is an element of $Y$, and the definition of the index is unchanged, so the main Theorem \ref{indexbound} applies.

We find interesting to notice that this idea to restrict a functional defined on a Finsler manifold to a Finsler-Hilbert manifold in order to exploit standard Morse theory in infinite dimension is due to Karen Uhlenbeck (\cite{uhlenbeck}).

In order to introduce the next two categories, we introduce some additional definitions.
Let $\Sigma$ be a closed Riemann surfaces of genus $\gamma$, and $\mathrm{Diff}_+^{\ast}(\Sigma)$ be the topological group of positive $W^{3,2}$-diffeomorphism (we adopt the standard notations of \emph{e.g.} \cite{brezis} for Sobolev functions) with either $3$ distinct marked points if $\gamma=0$, or $1$ marked point for $\gamma=1$ and no mark points for higher genera. Furthermore, if $(M^n,h)$ is a fixed Riemannian manifold, we denote by $\mathrm{Imm}(\Sigma,M^n)$ the Banach manifold of $W^{k,p}$-immersions (for $kp>2$) by 
\begin{align*}
	\mathrm{Imm}_{{k,p}}(\Sigma,M^n)=W^{k,p}(\Sigma,M^n)\cap\ens{\phi:d\phi(p)\wedge d\phi(p)\neq 0\;\, \text{for all}\;\, p\in \Sigma}
\end{align*}

It was recently proved by Tristan Rivi\`{e}re (\cite{hierarchies}) that the quotient spaces 
\begin{align*}
    &X=\widetilde{\mathrm{Imm}}_{2,4}(\Sigma,M^n)=\mathrm{Imm}_{2,4}(\Sigma,M^n)/\mathrm{Diff}_+^{\ast}(\Sigma)\\
	&Y=\widetilde{\mathrm{Imm}}_{3,2}(\Sigma,M^n)=\mathrm{Imm}_{3,2}(\Sigma,M^n)/\mathrm{Diff}_+^{\ast}(\Sigma)
\end{align*}
are (respectively) separated smooth Banach and \emph{Hilbert} manifolds, and this is really a crucial fact, as by the invariance under the diffeomorphism group on $\Sigma$, the perturbed functional of the area of the Willmore energy cannot satisfy the Palais-Smale condition, but satisfies this condition on the quotient space.

\textbf{Minimal surfaces (\cite{viscosity}, \cite{multiplicity}).}
Here, the Finsler manifolds are
\begin{align*}
	X=\widetilde{\mathrm{Imm}}_{2,4}(\Sigma,M^n),\quad Y=\widetilde{\mathrm{Imm}}_{3,2}(\Sigma,M^n)
\end{align*}
and the functions
\begin{align*}
	F(\phi)=\mathrm{Area}(\phi(\Sigma))=\int_{\Sigma}d\mathrm{vol}_g,\quad G(\phi)=\int_{\Sigma}\left(1+|\vec{\I}_g|^2\right)^2d\mathrm{vol}_g
\end{align*}
where $g=\phi^{\ast}h$ is the pull-back of the metric $h$ on $M^n$ by the immersion $\phi$, and $\vec{\I}_g$ is the second fundamental form of the immersion $\phi:\Sigma\rightarrow M^n$. However, we see that the subtlety here is that $F_{\sigma}$ satisfies the Palais-Smale condition only on $X$, but not on $Y$. However, as for all critical point $\phi\in X$, we actually have $\phi\in C^{\infty}(\Sigma,M^n)$, then we have in particular $\phi\in Y$, and one can directly verify that $D^2F_{\sigma}(\phi)$ is Fredholm on the \emph{Hilbert} space $T_{\phi}Y$ (see \cite{hierarchies}). Therefore, the main Theorem \ref{indexbound} applies to the viscosity method for minimal surfaces. Combining the recent resolution of the multiplicity one conjecture proved in this setting by A. Pigati and T. Rivi\`{e}re (\cite{multiplicity}) with the previous result of T. Rivi\`{e}re (\cite{hierarchies}), one can obtain the lower semi-continuity of the index.

\textbf{Willmore surfaces (\cite{eversion}, \cite{classification}).}

The goal here is to go further the minimisation for Willmore surfaces in space forms and to show the existence of Willmore surfaces solution to min-max problems, such as the so-called min-max sphere eversion (\cite{kusner}).

Restricting to the special case of Willmore spheres, we take
\begin{align*}
	X=\widetilde{\mathrm{Imm}}_{2,4}(S^2,\R^n),\quad Y=\widetilde{\mathrm{Imm}}_{3,2}(S^2,\R^n)
\end{align*}
and
\begin{align*}
	F(\phi)=\mathrm{W}(\phi)=\int_{S^2}|\H_g|^2d\vg,\quad F_{\sigma}(\phi)=F(\phi)+\sigma^2\int_{S^2}\left(1+|\vec{H}_g|^2\right)^2d\vg+\frac{1}{\log\left(\frac{1}{\sigma}\right)}\mathscr{O}(\phi)
\end{align*}
where $\H_g$ is the mean-curvature of the immersion $\phi:S^2\rightarrow \R^n$, and $\mathscr{O}(\phi)$ is the Onofri energy, defined by
\begin{align*}
	\mathscr{O}(\phi)=\frac{1}{2}\int_{S^2}|d\alpha|_g^2d\vg+4\pi\int_{S^2}\alpha e^{-2\alpha}d\mathrm{vol}_g-2\pi \log\left(\int_{S^2}d\vg\right)\geq 0.
\end{align*}
where $\alpha:S^2\rightarrow \R$ is the function given by the Uniformisation Theorem such that $g=e^{2\alpha}g_0$, where $g_0$ is a fixed metric on $S^2$ of constant Gauss curvature independent of $g$. That this quantity is non-negative was proved by Onofri (\cite{onofri}). Here one also easily proves that the hypothesis of the main Theorem \ref{indexbound} are satisfied.

For a proof of the lower semi-continuity of the index and an explicit application, we refer to \cite{morse2}.

\textbf{Acknowledgements.} I would like to thank my advisor Tristan Rivi\`{e}re for his support and very 
interesting related discussions. I also wish to thank Alessandro Pigati for critically listening a preliminary version of this article. 

\subsection{Organisation of the paper}

As is fairly standard in this theory, the proof is divided into two steps between the non-degenerate case and the degenerate case. In the first one, we assume that for all $\sigma>0$, the approximation  $F_{\sigma}$ is non-degenerate and in the second step that $\D^2F(x):T_xX\rightarrow T_xX$ is a Fredholm map at every critical point $x\in K(F_{\sigma})$. Through a general perturbation method due to Marino and Prodi (\cite{marino}), it is possible to reduce the problem to the non-degenerate case, but this is quite subtle to perturb the function to preserve the entropy condition, contrary to \cite{lazer} where the degenerate case followed directly from the non-degenerate case. 

Furthermore, let us emphasize that there is to our knowledge no method to prove directly Morse index estimates in this setting without reducing to the non-degenerate case, and the Fredholm hypothesis on the second derivative becomes at this point necessary as the only known way to perturb a function on a Finsler-Hilbert manifold to make it non-degenerate is to use the Sard-Smale theorem, for which this hypothesis is necessary.

\section{Technical lemmas}

\subsection{Preliminary definitions}

\begin{defi}
	Let $\pi:\mathscr{E}\rightarrow X$ be a Banach space bundle over a Banach manifold $X$ and let $\norm{\,\cdot\,}:\mathscr{E}\rightarrow \R_+$ be a continuous function such that for all $x\in X$ the restriction $\norm{\,\cdot\,}_{x}$ is a norm on the fibre $\mathscr{E}_x=\pi^{-1}(\ens{x})$. For all $x_0\in X$, and for all trivialisation $\varphi_{x_0}:\pi^{-1}(U_{x_0})\rightarrow U_{x_0}\times \mathscr{E}_{x_0}$ (where $U$ is some open neighbourhood of $x_0$) then for all $x\in U$, we get an isomorphism $L_x:\mathscr{E}_{x}\rightarrow \mathscr{E}_{x_0}$ so $\norm{\,\cdot\,}_x$ induces a norm on $\mathscr{E}_{x_0}$ by
	\begin{align*}
	\norm{v}_{x}=\norm{L_x^{-1}(v)}_{x}\qquad(\text{for all}\;\,v\in \mathscr{E}_{x_0}).
	\end{align*}
	We say that $\norm{\,\cdot\,}:\mathscr{E}\rightarrow \R$ is a Finsler structure on $\mathscr{E}$ is for all $x_0\in X$ and all such trivialisation $(U_{x_0},\varphi_{x_0})$, there exists a constant $C=C_{x_0}\geq 1$ such that for all $x\in U_{x_0}$, 
	\begin{align*}
	\frac{1}{C}\norm{\,\cdot\,}_x\leq \norm{\,\cdot\,}_{x_0}\leq C\norm{\,\cdot\,}_x.
	\end{align*}
	A Finsler manifold is a \emph{regular} (in the topological sense) $C^1$ Banach manifold $X$ equipped with a Finsler structure on the tangent space $TX$. A Finsler-Hilbert manifold or (infinite-dimensional) Riemannian manifold is a Finsler manifold modelled on a Hilbert space.
\end{defi}

\begin{theorem}[Palais \cite{palais2}\label{palais}]
	Let $(X,\norm{\,\cdot\,})$ be a Finsler manifold, and $d:X\times X\rightarrow \R_+\cup\ens{\infty}$ be such that for all $x,y\in X$
	\begin{align*}
	d(x,y)=\inf\ens{\int_{0}^{1}\norm{\gamma'(t)}_{\gamma(t)}dt: \gamma\in C^0([0,1],X), \gamma(0)=x,\;\, \gamma(1)=y}.
	\end{align*}
	Then $d$ is a distance on $X$ inducing the same topology as the manifold topology on  $X$.
\end{theorem}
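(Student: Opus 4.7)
The plan is to verify the distance axioms directly from the definition and then deduce the topology claim from a local bi-Lipschitz comparison between $d$ and a chart-norm, both relying on the comparability condition built into the definition of a Finsler structure.

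First I would dispose of the easy axioms. Non-negativity is immediate, since each $\norm{\,\cdot\,}_x$ is a norm. Symmetry holds because reversing an admissible path $t\mapsto\gamma(1-t)$ preserves the integrand: $\norm{-\gamma'(1-t)}_{\gamma(1-t)}=\norm{\gamma'(1-t)}_{\gamma(1-t)}$. The triangle inequality follows by concatenating admissible paths from $x$ to $y$ and from $y$ to $z$, reparametrising to $[0,1]$, and taking the infimum. The non-trivial separation $d(x,y)=0\Rightarrow x=y$ and the topology statement will both follow from a local estimate around an arbitrary point.

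Fix $x_0\in X$ and choose a chart $\varphi:U_{x_0}\to V\subset E$ centred at $x_0$ (with $E$ the Banach model) together with a compatible trivialisation of the tangent bundle $TU_{x_0}\simeq U_{x_0}\times T_{x_0}X$. After possibly shrinking $U_{x_0}$, the Finsler condition provides $C\geq 1$ with
\begin{align*}
\frac{1}{C}\norm{v}_{x_0}\leq \norm{v}_{x}\leq C\norm{v}_{x_0}
\end{align*}
for all $x\in U_{x_0}$ and $v$ in the trivialised fibre; composing with $D\varphi$ yields, up to enlarging $C$, the analogous comparison with the Banach norm of $E$. For the upper bound, take $y$ in a small coordinate ball and use the straight segment $\gamma(t)=\varphi^{-1}((1-t)\varphi(x_0)+t\,\varphi(y))$; its length is controlled by $C\norm{\varphi(y)-\varphi(x_0)}_E$, whence $d(x_0,y)\leq C\norm{\varphi(y)-\varphi(x_0)}_E$. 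For the lower bound, I would pick $\rho>0$ such that the closed chart-ball $\overline{B_\rho(x_0)}\subset U_{x_0}$ (possible by the topological regularity of $X$). Any admissible path $\gamma$ from $x_0$ to $y$ that stays in $U_{x_0}$ satisfies $\int_0^1\norm{\gamma'(t)}_{\gamma(t)}\,dt \geq \frac{1}{C}\norm{\varphi(y)-\varphi(x_0)}_E$, while a path leaving $\overline{B_\rho(x_0)}$ has Finsler length at least $\rho/C$ (estimating up to the first exit time). Therefore, for $y$ with $\norm{\varphi(y)-\varphi(x_0)}_E<\rho$, only paths of the first kind approach the infimum, giving $d(x_0,y)\geq \frac{1}{C}\norm{\varphi(y)-\varphi(x_0)}_E$.

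Combining the two bounds produces a bi-Lipschitz equivalence between $d$ and $\norm{\varphi(\cdot)-\varphi(x_0)}_E$ on a neighbourhood of $x_0$. Positivity is then immediate, since $d(x_0,y)=0$ forces $\varphi(y)=\varphi(x_0)$, hence $y=x_0$. The coincidence of topologies follows because small $d$-balls and small chart-norm balls around $x_0$ trap each other, so the two topologies share a neighbourhood basis at every point. The main obstacle is the lower bound for paths that exit the coordinate patch: one must prevent a curve from spiralling out of the chart and returning to a point arbitrarily close to $x_0$ with negligibly small length. The exit-time argument, combined with the topological regularity of $X$ and the local norm comparison, is precisely what rules this out; the remaining steps are routine reparametrisation and concatenation manipulations.
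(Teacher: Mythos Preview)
The paper does not supply its own proof of this theorem: it is quoted as a result of Palais and cited to \cite{palais2}, so there is no in-paper argument to compare against. Your outline is essentially the standard proof (and indeed the one Palais gives): verify the metric axioms formally, then use the Finsler comparability constant in a chart to get a local bi-Lipschitz equivalence between $d$ and the chart norm, from which both positivity and the equality of topologies follow.

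Two small remarks. First, your dichotomy in the lower bound should be stated as ``$\gamma$ stays in $\overline{B_\rho(x_0)}$'' versus ``$\gamma$ leaves $\overline{B_\rho(x_0)}$'', which are genuinely complementary; in the first case the chart comparison applies on all of $[0,1]$, in the second the exit-time estimate gives length $\geq \rho/C$, and both dominate $\tfrac{1}{C}\norm{\varphi(y)-\varphi(x_0)}_E$ once $\norm{\varphi(y)-\varphi(x_0)}_E<\rho$. Second, note that the statement as written in the paper takes $\gamma\in C^0([0,1],X)$ while simultaneously integrating $\norm{\gamma'(t)}_{\gamma(t)}$; the intended class is piecewise $C^1$ paths (as in Palais's original), and your argument implicitly and correctly uses this.
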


In particular, we will always assume that Finsler manifolds equipped with their Palais distance, usually denoted by $d$, and we will denote for all $A\subset X$ and $\delta>0$
\begin{align*}
&N_{\delta}(A)=X\cap\ens{x: d(x,A)\leq \delta}\\
&U_{\delta}(A)=X\cap\ens{x: d(x,A)<\delta}.
\end{align*}
\begin{theorem}[Palais, \cite{palais2}]\label{palais2}
	Let $(X,\norm{\,\cdot\,})$ be a Finsler manifold modelled on some Banach space $E$, let $U\subset X$ be an open subset, $\varphi:U\rightarrow E$ a chart and $x_0\in U$. We define for all $r>0$
	\begin{align*}
	&B(x_0,r)=U\cap\ens{x:\norm{\varphi(x)-\varphi(x_0)}\leq r}\\
	&U(x_0,r)=U\cap\ens{x: \norm{\varphi(x)-\varphi(x_0)}<r}\\
	&S(x_0,r)=U\cap\ens{x: \norm{\varphi(x)-\varphi(x_0)}=r}.
	\end{align*}
	Then for $r>0$ sufficiently small $B(x_0,r)$ is a closed neighbourhood of $x_0$, $U(x_0,r)$ is its interior relative to $X$ and $S(x_0,r)$ is the frontier relative to $X$.
\end{theorem}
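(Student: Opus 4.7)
The three claims are purely topological and are reduced to the corresponding statements about balls in the model space $E$, transported through $\varphi$; the only analytic input needed is a Lipschitz comparison between the Palais distance $d$ of Theorem \ref{palais} and the chart norm $x\mapsto\norm{\varphi(x)-\varphi(x_0)}_E$ in a neighbourhood of $x_0$. Since $\varphi(U)$ is open in $E$ and contains $\varphi(x_0)$, I first select $r_0>0$ with $\bar{B}_E(\varphi(x_0),r_0)\subset\varphi(U)$, so that for all $r\le r_0$ the sets in question become $B(x_0,r)=\varphi^{-1}(\bar{B}_E(\varphi(x_0),r))$, $U(x_0,r)=\varphi^{-1}(B_E(\varphi(x_0),r))$, and $S(x_0,r)=\varphi^{-1}(\partial B_E(\varphi(x_0),r))$, all lying inside $U$. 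Openness of $U(x_0,r)$ in $X$ is then immediate, since $\varphi:U\rightarrow\varphi(U)$ is a homeomorphism and $U$ is open in $X$.

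The main technical step is to produce $r_1\in(0,r_0]$ and $K\ge 1$ such that $d(x,x_0)\le K\,\norm{\varphi(x)-\varphi(x_0)}_E$ for every $x\in\varphi^{-1}(\bar{B}_E(\varphi(x_0),r_1))$. This comes from plugging the straight-line chart path $\gamma(t)=\varphi^{-1}(\varphi(x_0)+t(\varphi(x)-\varphi(x_0)))$ into the definition of $d$; the Finsler compatibility on the trivialisation over $x_0$ provides a constant $C\ge 1$ with $\norm{\,\cdot\,}_{\gamma(t)}\le C\,\norm{\,\cdot\,}_{x_0}$ once $r_1$ is small enough, which upgrades the chart-length of $\gamma$ into the desired bound on its Palais length. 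Since $(X,d)$ is metrizable by Theorem \ref{palais} and $U$ is open, one can then fix $\delta>0$ with $\ens{x\in X:d(x,x_0)\le\delta}\subset U$, and henceforth restrict to radii $r\le\min(r_1,\delta/K)$.

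Closedness of $B(x_0,r)$ then falls out: given $x_n\in B(x_0,r)$ converging to $y\in X$, continuity of $d(\cdot,x_0)$ gives $d(y,x_0)=\lim d(x_n,x_0)\le\delta$, so $y\in U$; by continuity of $\varphi$ at $y$, $\varphi(x_n)\to\varphi(y)\in\bar{B}_E(\varphi(x_0),r)$, whence $y\in B(x_0,r)$. For the interior, $U(x_0,r)\subset\mathrm{int}_X B(x_0,r)$ is obvious, and conversely any $x\in B(x_0,r)$ with $\norm{\varphi(x)-\varphi(x_0)}_E=r$ fails to be interior because each $X$-neighbourhood of $x$ has $\varphi$-image containing points of the form $\varphi(x)+t(\varphi(x)-\varphi(x_0))$ for small $t>0$, which have chart-norm strictly greater than $r$. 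Since $B(x_0,r)$ is closed in $X$, its frontier equals $B(x_0,r)\setminus U(x_0,r)=S(x_0,r)$. I expect the only real obstacle to be the bookkeeping of the Finsler constants in the Lipschitz estimate of the second step; the remaining arguments are formal once $B(x_0,r)$ is trapped inside a Palais ball contained in $U$.
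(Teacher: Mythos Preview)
Your argument is correct. The paper does not supply its own proof of this statement --- it is quoted as a result of Palais (\cite{palais2}) and used as a black box --- so there is nothing to compare against at the level of strategy. Your proof is self-contained and sound: the Finsler comparability condition on the chart trivialisation yields the Lipschitz bound $d(x,x_0)\le K\norm{\varphi(x)-\varphi(x_0)}_E$ along straight chart paths, which is exactly what is needed to trap $B(x_0,r)$ inside a Palais-closed ball contained in $U$ and hence to upgrade ``closed in $U$'' to ``closed in $X$''. The remaining identifications of interior and frontier are then, as you say, purely topological via the homeomorphism $\varphi$. One small point of bookkeeping worth making explicit: in the Lipschitz step, the constant $K$ absorbs both the Finsler comparability constant $C$ from the definition and the operator norm of $d\varphi(x_0)^{-1}:E\to (T_{x_0}X,\norm{\,\cdot\,}_{x_0})$, since the chart norm on $E$ need not coincide with the Finsler norm at $x_0$.
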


\begin{cor}\label{nottrivial}
	Let $(X,\norm{\,\cdot\,})$ be a Finsler manifold and $K\subset X$ be a compact  subset. Then for $r>0$ small enough, $N_{\delta}(K)$ is closed, and $U_{\delta}(K)$ is its interior relative to $X$.
\end{cor}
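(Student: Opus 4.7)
The proof splits cleanly into an easy topological part and a subtle geometric part where the smallness of $\delta$ is essential. For the easy part, Theorem~\ref{palais} says that $d$ generates the manifold topology of $X$ and is therefore jointly continuous, which makes $x\mapsto d(x,K)=\inf_{k\in K}d(x,k)$ a $1$-Lipschitz function; consequently $N_\delta(K)=\{d(\cdot,K)\le\delta\}$ is closed and $U_\delta(K)=\{d(\cdot,K)<\delta\}$ is open for \emph{every} $\delta>0$, and since $U_\delta(K)$ is open inside $N_\delta(K)$ the inclusion $U_\delta(K)\subseteq\mathrm{int}(N_\delta(K))$ is automatic. The nontrivial content is the reverse inclusion $\mathrm{int}(N_\delta(K))\subseteq U_\delta(K)$, which I would prove by contradiction.

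Assume $x\in\mathrm{int}(N_\delta(K))$ with $d(x,K)=\delta$. By compactness of $K$ and continuity of $d$, pick a minimizer $k^\ast\in K$ with $d(x,k^\ast)=\delta$. Apply Theorem~\ref{palais2} at every point of $K$ to produce charts $(\varphi_k,U_k)$ enjoying the chart-ball property, and by compactness extract a finite subcover $\{U(k_j,r_{k_j}/2)\}_{j=1}^N$ of $K$. Choose $\delta$ small enough that $N_{3\delta}(K)\subseteq\bigcup_j U(k_j,r_{k_j})$ and that on each such chart domain the Finsler norm and the pulled-back chart norm are equivalent with constants arbitrarily close to $1$ (the last point follows from the continuity of the Finsler structure).

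Working in the chart around $k^\ast$, which contains both $x$ and any candidate other near-minimizer $k'\in K$ of $x$ by the choice of $\delta$, set $\varphi=\varphi_{k^\ast}$ with $\varphi(k^\ast)=0$ and define the \emph{radial extension} $y=\varphi^{-1}\bigl((1+t)\varphi(x)\bigr)$ for small $t>0$. The local norm equivalence, applied with equivalence parameter $\eta\ll t$, yields $d(k^\ast,y)>\delta$ while keeping $d(x,y)$ arbitrarily small. If $y$ were still in $N_\delta(K)$, some $k'\in K$ would satisfy $d(y,k')\le\delta$; the resulting estimate $d(x,k')\le\delta+d(x,y)$ forces $k'$ into the same chart around $k^\ast$, and in that chart the minimality $d(x,k')\ge\delta$ together with the linear geometry of the extension gives $d(y,k')>\delta$, a contradiction. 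Hence $y\notin N_\delta(K)$, contradicting the assumption that $x$ lies in the interior.

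The main obstacle is this last step, namely the handling of \emph{multiple} minimizers of $d(x,\cdot)$ on $K$. The smallness of $\delta$ is precisely what makes the chart reduction work: it confines all near-minimizers of $x$ to a single chart on which the Finsler norm behaves like a fixed Banach norm, turning the radial outward motion from $k^\ast$ into a genuine escape from \emph{every} near-minimizer simultaneously. Without such smallness the identification $\mathrm{int}(N_\delta(K))=U_\delta(K)$ can genuinely fail, as witnessed by the standard example $K=S^1\subset\R^2$ with $\delta=1$ and $x$ the origin, where every point of $K$ is a minimizer and $x$ lies in $\mathrm{int}(N_1(K))$ but not in $U_1(K)$.
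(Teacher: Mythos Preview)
The paper states this corollary without proof, as an immediate consequence of Theorem~\ref{palais2}, so there is no argument in the paper to compare against. Your treatment of the easy direction is correct and clearly written. The genuine gap is in the last step, where you assert that the radial extension $y$ from $k^\ast$ increases the distance to \emph{every} near-minimizer $k'\in K$: you write that ``the minimality $d(x,k')\ge\delta$ together with the linear geometry of the extension gives $d(y,k')>\delta$'', but this is exactly the non-obvious point and is false in general. Already in a flat chart take $k^\ast=0$, $x=\delta e$ for a unit vector $e$, and $k'=2\delta e$, so that $\norm{x-k^\ast}=\norm{x-k'}=\delta$; then $y=(1+t)x$ satisfies $\norm{y-k'}=(1-t)\delta<\delta$. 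Moving radially away from one minimizer can move \emph{toward} another, and nothing in your use of the smallness of $\delta$ rules this out: your smallness hypothesis only places the near-minimizers in a common chart where the Finsler norm is close to a fixed Banach norm, and the counterexample lives precisely in that setting.

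In fact the statement, read as ``for all sufficiently small $\delta$'', seems too strong for arbitrary compact $K$. With $K=\{0\}\cup\{1/n:n\ge 2\}\subset\R$ and $\delta=\tfrac{1}{2n(n-1)}$, the closed $\delta$-intervals about $1/n$ and $1/(n-1)$ touch at a single point $m$ which lies in $\mathrm{int}(N_\delta(K))$ yet satisfies $d(m,K)=\delta$; these $\delta$ are arbitrarily small. What the paper actually uses later (and what Theorem~\ref{palais2} genuinely delivers) is the case where $K$ is a finite set of isolated critical points, where the reduction to Theorem~\ref{palais2} applied pointwise is immediate once $\delta$ is smaller than half the minimal mutual distance. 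If you want to rescue a general statement, either restrict to finite $K$, or weaken the conclusion to hold for a dense set of small $\delta$ rather than all of them.
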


\begin{defi} 
	Let $E,F$ be two Banach spaces. We say that a linear map $T\in\mathscr{L}(E,F)$  is a \emph{Fredholm operator} if $\Im(T)\subset F$ is closed, and  $\mathrm{Ker}(T)\subset E$ and $\mathrm{Coker}(T)=F/\mathrm{Im}(T)$ are finite-dimensional. Then the index $\mathrm{Ind}(T)\in \Z$ is defined by
	\begin{align*}
	\mathrm{Ind}(T)=\dim\mathrm{Ker}(T)-\mathrm{dim}(\mathrm{Coker}(T)).
	\end{align*}
\end{defi}

\begin{defi}
	Let $X,Y$ be two Banach manifolds and $F:X\rightarrow Y$ be a $C^1$ map. We say that $F$ is a \emph{Fredholm map} at $x$ if $DF(x):T_xX\rightarrow T_{F(x)}Y$ is a Fredholm operator and we define the index of $F$ at $x$, still denoted by $\mathrm{Ind}_x(F)$, by
	\begin{align*}
	\mathrm{Ind}_x(F)=\mathrm{Ind}(DF(x)).
	\end{align*}
\end{defi}

As the map $x\mapsto \mathrm{Ind}_x(F)\in \Z$ is continuous, we deduce that it is constant on each connected component of $X$, and we will denote it by $\mathrm{Ind}(F)$ if $F$ is defined on a connected domain.

In the applications we have in mind, we cannot assume that the manifold $X$ be connected, so we will have to keep in mind this technical point. 

If $F:X\rightarrow Y$ is a $C^1$ map between Banach manifolds, we say that $x\in X$ is a regular point if $DF(x):X\rightarrow Y$ is surjective. The complement of the regular points are called the \emph{singular points}, the image under $F$ of the singular points are the \emph{critical values} and their complement the \emph{regular values}.

Now we recall the celebrated Sard's theorem of Smale, which proceeds by reducing the infinite dimensional version to the finite dimensional Sard's theorem.

\begin{theorem}[Smale, \cite{sardsmale}]\label{sard}
	Let $X,Y$ be two Banach manifolds and let $U\subset X$ be an open \emph{connected} subset and $F:U\rightarrow Y$ be a $C^q$ Fredholm map, where
	\begin{align*}
	q\geq \max\ens{\mathrm{Ind}(F),0}+1.
	\end{align*}
	Then the regular values of $F$ are almost all $Y$, \textit{i.e.} the set of critical value is a set of first Baire category (or meagre).
\end{theorem}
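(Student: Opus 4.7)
The plan is to reduce to the classical finite-dimensional Sard theorem via the Fredholm structure, by putting $F$ in a local normal form near each point and then controlling the image of the critical set slicewise. Writing $\Sigma(F)=\{x\in U:DF(x)\text{ not surjective}\}$, the goal is to show that $F(\Sigma(F))$ is meagre in $Y$; since meagreness is preserved under countable unions, and since separable Banach manifolds are Lindelöf (so $U$ admits a countable cover by coordinate balls), it suffices to produce, around each $x_0\in U$, an open neighbourhood $V$ such that $F(\Sigma(F)\cap V)$ is nowhere dense in $Y$.

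Fix $x_0$ and set $T:=DF(x_0)$. By the Fredholm property, $K:=\ker T$ is finite-dimensional with a closed complement $E$, and $R:=\mathrm{Im}(T)$ is closed with a finite-dimensional complement $C$ in $T_{F(x_0)}Y$; moreover $\dim K-\dim C=\mathrm{Ind}(F)$. Working in charts, split source and target as $K\times E$ and $R\times C$ respectively, and write $F=(F_1,F_2)$. The differential of $(k,e)\mapsto(k,F_1(k,e))$ at the origin is $\mathrm{diag}(I,T|_E)$, hence invertible, so the inverse function theorem in Banach spaces yields a $C^q$ change of coordinates after which $F$ takes the normal form
\begin{align*}
    \tilde F(k,r)=(r,G(k,r)), \qquad G:K\times R\to C,\quad G\in C^q.
\end{align*}
A direct block computation gives $D\tilde F(k,r)$ surjective if and only if $\partial_k G(k,r):K\to C$ is surjective, so the critical set is the closed subset $\Sigma:=\{(k,r):\partial_k G(k,r)\text{ is not surjective}\}$, and its image is
\begin{align*}
    \tilde F(\Sigma)\;=\;\bigl\{(r,c)\in R\times C:\exists\,k\in K\text{ with }(k,r)\in\Sigma\text{ and }G(k,r)=c\bigr\}.
\end{align*}

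For each fixed $r\in R$, the map $G_r:=G(\cdot,r):K\to C$ is $C^q$ between finite-dimensional manifolds with $\dim K-\dim C=\mathrm{Ind}(F)$, so the classical Sard theorem, whose differentiability requirement $q\geq\max\{\dim K-\dim C,0\}+1=\max\{\mathrm{Ind}(F),0\}+1$ matches exactly our hypothesis, guarantees that the set of critical values of $G_r$ has Lebesgue measure zero in $C$. Now cover $\Sigma$ by a countable family of compact sets $\{K_n\}$ (using that the local domain is second-countable); then $\tilde F(K_n)$ is compact, hence closed in $R\times C$, and its slice above any $r\in R$ is a closed subset of $C$ consisting of critical values of $G_r$, therefore of Lebesgue measure zero and, being closed in the finite-dimensional space $C$, nowhere dense there. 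A product-set argument then yields nowhere-denseness in $R\times C$: if $\tilde F(K_n)$ contained an open set, it would contain a product $V_R\times V_C$, forcing some slice to contain the open set $V_C$, a contradiction. Hence $\tilde F(\Sigma)=\bigcup_n \tilde F(K_n)$ is meagre locally, and summing over the countable cover of $U$ yields the conclusion.

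The one delicate step is the passage from the slicewise finite-dimensional Sard conclusion to meagreness on $R\times C$; this is precisely where the Fredholm reduction is indispensable, since only the finite-dimensionality of $C$ turns the classical measure-zero conclusion into closed-and-nowhere-dense, and only the decomposition $R\oplus C$ permits the product-slice argument. The other subtlety, largely bookkeeping, is to verify that the differentiability threshold on $q$ is dictated solely by $\mathrm{Ind}(F)=\dim K-\dim C$ and not by any other dimension, which is automatic from the normal form above. Everything else — the implicit function theorem, the closedness of $\Sigma$, and the countable cover — is standard once the Fredholm splitting is in place.
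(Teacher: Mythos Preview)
The paper does not give its own proof of this statement: Theorem~\ref{sard} is simply quoted from Smale's paper \cite{sardsmale} as a known result, with no argument supplied. So there is no ``paper's proof'' to compare against.

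That said, your argument is essentially Smale's original one and is correct in outline: local Fredholm normal form via the inverse function theorem, reduction of the surjectivity condition to that of the finite-dimensional partial derivative $\partial_k G$, slicewise application of finite-dimensional Sard (whose regularity hypothesis is exactly $q\geq \max\{\mathrm{Ind}(F),0\}+1$), and then the product-slice argument to pass from measure-zero slices in $C$ to nowhere-denseness of the compact pieces $\tilde F(K_n)$ in $R\times C$. One point you gloss over is that the passage to a \emph{countable} cover of $U$ by such local charts requires a second-countability or separability assumption on $X$; this hypothesis is present in Smale's original statement but is not explicit in the version quoted here, and without it meagreness of $F(\Sigma(F))$ does not follow from the local result. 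Apart from this standing assumption, the proof is sound.
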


\subsection{Morse Index and admissible families of min-max}\label{min-max}

Let $X$ a $C^2$ Banach manifold, and suppose that $F:X\rightarrow\R$ is a function which admits second order G\^{a}teaux derivatives in $X$, \textit{i.e.} for all $C^2$ path $\gamma:(-\epsilon,\epsilon)\rightarrow X$ the function $t\mapsto F(\gamma(t))$ is a $C^2$ function. Then a critical point $x\in X$ of $F$ is an element such that for all $C^2$ path $\gamma:(-\epsilon,\epsilon)\rightarrow X$ such that $\gamma(0)=x$, we have
\begin{align*}
	\frac{d}{dt}F(\gamma(t))_{|t=0}=0.
\end{align*}
If $x$ is a critical point, we define the second derivative quadratic form $Q_x=D^2F(x):T_xX\rightarrow (T_xX)^{\ast}$ by
\begin{align*}
	Q_x(v)(v)=D^2F(x)(v,v)=\frac{d^2}{dt^2}F(\gamma(t))_{|t=0}
\end{align*}
for all $v\in T_xX$ and path $\gamma:(-\epsilon,\epsilon)\rightarrow X$ such that $\gamma(0)=x$ and $\gamma'(0)=v$.

Then $Q_x$ is a well-defined continuous map on $T_xX$, and the index $\mathrm{Ind}_{F}(x)$ of $x$ with respect to $F$, is defined by
\begin{align*}
	\mathrm{Ind}_{F}(x)=\sup\ens{\dim V:V\subset T_xX\;\,\text{is a sub vector-space such that}\;\, Q_x(v)(v)<0\;\,\text{for all}\;\, v\in V}
\end{align*}
To define the nullity, we need to assume that $F:X\rightarrow \R$ is $C^2$ Fr\'{e}chet differentiable map and recalling that $Q_x=D^2F(x):T_xX\rightarrow (T_xX)^{\ast}$, we define
\begin{align*}
	\mathrm{Null}_F(x)=\sup \ens{\dim W:W\subset T_xX\;\, \text{is a sub vector-space such that}\;\, Q_x(w)=0\;\,\text{for all}\;\, w\in W}.
\end{align*}
If $F$ is more regular or $X$ is a Finsler-Hilbert manifold, the definition remains unchanged. That is, if $X$ is a Finsler-Hilbert manifold, then we have
\begin{align*}
	D^2F(x)(v,v)=\s{L v}{v}_x
\end{align*}
for some self-adjoint linear operator $L:T_xX\rightarrow T_xX$. Its number of negative eigenvalues (with multiplicity) is also equal to the index of $F$ by the preceding definition (while the nullity is equal to the number of Jacobi fields, \textit{i.e.} $\mathrm{Null}_F(x)=\dim \mathrm{Ker} (L)$).

\begin{remimp}
	In particular, if $Y\subset X$ is a Lipschitz embedded \emph{Hilbert} manifold, and $x\in Y$ is a critical point of $F$, then the square gradient $\D^2F(x):T_{x}X\rightarrow T_{x}X$ restricts continuously to the \emph{Hilbert} space $T_{x}Y$ and the definition of the index is unchanged, provided that $T_xY\subset T_{x}X$ is dense, a condition easily verified in the cases of interest to us (it is stated explicitly in the hypothesis od the main Theorem \ref{indexbound}).
\end{remimp}

We first define families of min-max based on families of continuous maps.

\begin{defi}[Min-max families]\label{defminmax} Let $X$ be a $C^1$ Finsler manifold.	

\textbf{(1)} \textbf{ Admissible family.}	We say that $\mathscr{A}\subset \mathscr{P}(X)\setminus\ens{\varnothing}$ is an admissible min-max family of dimension $d\in\N$ with boundary $(B^{d-1},h)$ (possibly empty)
	for $X$ if
	\begin{enumerate}
		\itemsep-0.2em 
		\item[(A1)] For all $A\in\A$, $A$ is compact in $X$,
		\item[(A2)] There exists a $d$-dimensional compact \emph{Lipschitz} manifold $M^d$ with boundary $B^{d-1}$, (possibly empty)
		 and a continuous map $h:B\rightarrow X$ such that for all $A\in\mathscr{A}$, there exists a continuous map $f:M^d\rightarrow X$ that $A=f(M^d)$, and $f=h$ on $B$. 
		\item[(A3)] For every homeomorphism $\varphi$ of $X$ isotopic to the identity map such that $\varphi|_{B}=\mathrm{Id}|_{h(B)}$, and for all $A\in\A$, we have $\varphi(A)\in A$.
	\end{enumerate}

More generally, one can relax the notions of uniqueness of the compact manifold $M^d$ as follows. Let $I$ a set of indices and a family $\ens{M_i^d}_{i\in I}$ of compact Lipschitz manifold with boundary $(B^{d-1}_i,h_i)$. Then we define
\begin{align*}
	\mathscr{A}=\mathscr{P}(X)\cap \ens{A: \text{there exists}\;\,i\in I\;\,\text{and}\;\, f\in C^0(M_i^d,X)\;\, \text{such that}\;\, A=f(M_i^d)\;\,\text{and}\;\, f_{B_i^{d-1}}=h_i}
\end{align*}
Clearly, this class is stable under homeomorphisms of $X$ isotopic to the identity preserving the boundary $h(B)$.

\textbf{(2)} \textbf{Dual admissible family.} In a dual fashion, let $I$ be a (non-empty) sets of indices and let $\ens{C_i}_{i\in I}\subset X$ be a collection of subsets such that for all $i\in I$, there exists a non-empty set $J_i$ and a family of continuous functions $\{h_i^j\}_{j\in J_i}\in C^0(C_i,\R^d)$. Then we define $\mathscr{A}^{\ast}=\mathscr{A}(I,\ens{J_i}_{i\in I}, \{h_i^j\}_{i\in I, j\in J_i})$ by
\begin{align*}
	\mathscr{A}^{\ast}=\mathscr{P}(X)\cap\{A: &\text{ there exists } i\in I\;\, \text{such that for all } h\in C^0(X,\R^d)\;\, \text{such that } h_{|C_i}=h_{i}^j\;\, \text{for some } j\in I\\
	&\text{one has}\;\, 0\in h(A)
	\}.
\end{align*}
If the functions $h_i:B^{d-1}_i\rightarrow X$ are implicit, then we say by abuse of notation that $\ens{C_i}_{i\in I}=\ens{h(B_i^{d-1})}_{i\in I}$ is the boundary of $\mathscr{A}$ (this permits to give a uniform definition of boundary for each of admissible families).

\textbf{(3)} \textbf{Co-dual admissible family.} Finally, given a $d$-dimension dual admissible family $\mathscr{A}^{\ast}$, a $d$-dimensional co-dual admissible family is defined by
\begin{align*}
	\widetilde{\mathscr{A}}=\mathscr{A}^{\ast}\cap\ens{A:\dim_{\mathscr{H}}(A)<d+1},
\end{align*}
where $\dim_{\mathscr{H}}$ designs the Hausdorff dimension relative to the Hausdorff measures of the metric space $X$ (equipped with its Palais distance). 
The class is only stable under locally Lipschitz homeomorphism of $X$ isotopic to the identity (this is not restrictive, as the only homeomorphisms of interest are gradient flow of $C^2$ functions, which are indeed locally Lipschitz).

Finally, we define the following boundary values of admissible families $\mathscr{A}$, $\mathscr{A}^{\ast}$ and $\widetilde{\mathscr{A}}$ with boundary $\ens{C_i}_{i\in I}$ by 
\begin{align*}
	&\widehat{\beta}(F,\mathscr{A})=\sup_{i\in I}\,\sup F(h_i(B_i^{d-1})),\quad (\text{where}\;\, C_i=h(B^{d-1}_i)\;\, \text{for all}\;\, i\in I)\\
	&\widehat{\beta}(F,\mathscr{A}^{\ast})=\widehat{\beta}(F,\widetilde{\mathscr{A}})=\sup_{i\in I}\,\sup F(C_i).
	.
\end{align*}
\end{defi}

\begin{rem}
	The definition of the third family in \cite{lazer} is the more restrictive
	\begin{align*}
		\widetilde{\mathscr{A}}=\mathscr{A}^{\ast}\cap\ens{A: \mathscr{H}^d(A)<\infty}
	\end{align*}
	but as we shall see, our definition will still permit to obtain the suitable two-sided index bounds.
\end{rem}

\begin{rem}
	In the definition of the first family of min-max, the hypothesis on $M^d$ (or equivalently on $\ens{M_i^d}_{i\in I}$) can be considerably weakened, as the main Theorem \ref{indexbound} would still hold if $M^d$ was merely a metric space of Hausdorff dimension (with respect to the metric) at most $d$ admitting Lipschitz partitions of unity. Furthermore, the family of boundaries $\ens{B_i^{d-1}}_{i\in I}$ need not be a boundary, but can be any closed subset, as long as it satisfies the non-triviality condition as recalled below. In particular, $M^d$ can be assumed to be a cellular complex of dimension at most $d$. This will be particularly important in the example of Section \ref{tristan}, where we shall also in some special situation relax the hypothesis relative to the continuity of the different functions involved in a situation where weaker topologies are available.
\end{rem}

\begin{defi}
	Let $X$ be a $C^1$ Finsler manifold and $\mathscr{A}$ (resp. $\mathscr{A}^{\ast}$, resp. $\widetilde{\mathscr{A}}$) be a $d$-dimensional admissible (resp. dual, resp. co-dual) min-max family  with boundary $\ens{C_i}_{i\in I}$. We say that $\mathscr{A}$ (resp. $\mathscr{A}^{\ast}$, resp. $\widetilde{\mathscr{A}}$) is \emph{non-trivial} with respect to a continuous map $F:X\rightarrow \R$ if 
	\begin{align}\label{adapted}
		\beta(F,\mathscr{A})=\inf_{A\in \mathscr{A}}\sup F(A)>\sup \sup F(h_i(B_i^{d-1}))=\widehat{\beta}(F,\mathscr{A}).
	\end{align}
	Whenever this does not yield confusion, we shall write more simply $\beta(\mathscr{A})$ and $\widehat{\beta}(\mathscr{A})$.
\end{defi}

\begin{rem}
	The condition $\mathrm{(A2)} 
	$ can be relaxed in the sense that the applications $f:M^d\rightarrow X$ need not be continuous with respect to the strong topology of $X$, as long as we take a weaker notion of continuity stable under homeomorphisms of $X$ isotopic to the identity and fixing the boundary $h(B)$. See Section \ref{tristan} for an explicit example involving families of immersions continuous with respect to the flat norm of currents.
\end{rem}

The second class of mappings are based on (co)-homology type properties.

\begin{defi}
	Let $R$ be an arbitrary ring,  $G$ be an abelian group, and $d\in \N$ a fixed integer.
	
	\textbf{(4)} \textbf{Homological family.} Let ${\alpha}_{\ast}\in H_d(X,B,R)\setminus \ens{0}$ be a \emph{non-trivial} $d$-dimensional relative (singular) homology class of $X$ with respect to $B$ with $R$ coefficients. We say that $\underline{\mathscr{A}}=\underline{\mathscr{A}}(\alpha_{\ast})$ is a $d$-dimensional homological family with respect to $\alpha_{\ast}\in H_d(X,B,R)$ and boundary $B$ if
	\begin{align*}
		\underline{\mathscr{A}}({\alpha}_{\ast})=\mathscr{P}(X)\cap \ens{A: A\;\, \text{compact}, B\subset A\;\, \text{and}\;\, \alpha\in \mathrm{Im}(\iota^{A}_{\ast})},
	\end{align*} 
	where for all $A \supset B$, the application $\iota_{\ast}^A: H_d(A,B,R)\rightarrow H_d(X,A,R)$ is the induced map in homology from the injection $\iota^{A}:A\rightarrow X$.
	
	\textbf{(5)} \textbf{Cohomological family.} Let $\alpha^{\ast}\in H^{d}(X,G)\setminus\ens{0}$ be a \emph{non-trivial} $d$-dimensional (singular) cohomology class of $X$ with $G$ coefficients. We say that $\bar{\mathscr{A}}=\bar{\mathscr{A}}(\alpha^{\ast})$ is a $d$-dimensional cohomological family with respect to $\alpha^{\ast}\in H^d(X,G)$ if
	\begin{align*}
		\bar{\mathscr{A}}(\alpha^{\ast})=\mathscr{P}(X)\cap\ens{A: A \text{ compact and}\;\, \alpha^{\ast}\notin \mathrm{Ker}(\iota^{\ast}_{A})},
	\end{align*}
	where for all $A\subset X$, the application $\iota^{\ast}_{ A}:H^d(X,G)\rightarrow H^d(A,G)$ is the induced map in cohomology from the injection $\iota_{A}:A\rightarrow X$. In other word, the non-zero class $\alpha^{\ast}$ is not annihilated by the restriction map in cohomology $\iota^{\ast}_{ A}:H^d(X,G)\rightarrow H^d(A,G)$.

\end{defi}

\begin{rem}
	This recovers the classes (e) and (f) in the seminal paper of Palais (\cite{palais}).
	We observe that for cohomological families, there is no boundary conditions to check, as they are obviously stable under any ambient homeomorphism isotopic to the identity $\mathrm{Id}_{X}:X\rightarrow X$. One can check that no restrictions is necessary for the coefficients in homology and cohomology.
\end{rem}

\subsection{Deformation lemmas}

The results we present here are essentially adaptations to our setting of known results of Lazer-Solimini and Solimini (see also the results of Ghoussoub for subsequent extensions \cite{ghoussoub}, \cite{ghoussoub2}).

The next lemma is due to Solimini and absolutely crucial as, whereas the restriction of $F_{\sigma}$ on the Hilbert does \emph{not} satisfy the Palais-Smale condition, it satisfies a stronger property on a suitable neighbourhood of critical points.

If $(X,\norm{\,\cdot\,})$ is a Finsler manifold equipped with its Palais distance $d$ and $A\subset X$, we recall the notations
\begin{align*}
&U_{\delta}(A)=X\cap\ens{x: d(x,A)<\delta},\qquad 
N_{\delta}(A)=X\cap\ens{x:d(x,A)\leq \delta}.
\end{align*}
Notice in particular that by Corollary \ref{nottrivial}, if $A$ is assumed to be compact, then $N_{\delta}(A)$ is closed and $U_{\delta}(A)$ is its interior. In all constructions, we will assume implicitly whenever necessary that such $\delta>0$ has been chosen such that $N_{\delta}(A)$ is closed.

\begin{prop}[Solimini, \cite{solimini}]\label{propre}
	Let $X$ be a $C^2$ Finsler-Hilbert manifold and $F:X\rightarrow \R$ be a $C^2$ function, and assume that $K\subset K(F)$ is a compact subset of the critical points of $F$. If the square gradient $\D^2F(x):T_xX\rightarrow T_xX$ is a Fredholm operator for all $x\in X$, for all $\epsilon>0$, there exists $\delta>0$ such that for all $\tilde{F}:X\rightarrow \R$ such that
	\begin{align*}
		\Vert F-\tilde{F}\Vert_{C^2}\leq \epsilon
	\end{align*}
	the map $\D\tilde{F}$ is proper on $N_{\delta}(K)$. In particular, $\tilde{F}$ satisfies the Palais-Smale condition on $N_{\delta}(K)$.
\end{prop}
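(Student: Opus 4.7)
The strategy is to reduce to a local analysis near each $k\in K$ using compactness of $K$, and to exploit the self-adjoint Fredholm structure of $\D^2 F(k)$ to split the tangent space into the (finite-dimensional) kernel and an orthogonal complement on which the Hessian is invertible. First I would apply Theorem \ref{palais2} together with the compactness of $K$ to cover $K$ by finitely many trivializing charts $(U_{k_1},\varphi_1),\dots,(U_{k_m},\varphi_m)$, identifying each $U_{k_i}$ with an open neighbourhood of $0$ in $H_i=T_{k_i}X$, and to choose $\delta>0$ small enough that $N_\delta(K)\subset \bigcup_i U_{k_i}$. By pigeonhole, any sequence in $N_\delta(K)$ admits a subsequence contained in a single such chart, so it suffices to establish properness of $\nabla\tilde F$ locally near a fixed $k\in K$.

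\textbf{Local analysis near $k$.} Set $L:=\D^2 F(k)$, which is self-adjoint (as $F$ is $C^2$) and Fredholm by hypothesis, and write $V=\mathrm{Ker}(L)$ (finite-dimensional), $W=V^{\perp}$. Self-adjointness gives $L|_W:W\to W$ is an isomorphism. For $\epsilon$ small enough, $\tilde L:=\D^2\tilde F(k)$ satisfies $\|\tilde L-L\|\leq \epsilon$, so $\pi_W\circ \tilde L|_W:W\to W$ is a small perturbation of $L|_W$ and is therefore invertible, with inverse bounded uniformly in $\tilde F$. By Taylor's theorem, upon shrinking $\delta$ further we have, for every $x$ in the chart,
\begin{align*}
\nabla \tilde F(x)=\nabla\tilde F(k)+\tilde L(x-k)+R_{\tilde F}(x),\qquad \|R_{\tilde F}(x)\|\le \eta\,\|x-k\|,
\end{align*}
with $\eta$ as small as desired, uniformly for $\|\tilde F-F\|_{C^2}\leq \epsilon$ (using uniform continuity of $D^2F$ and the $C^2$-bound on $\tilde F-F$).

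\textbf{Solving for the $W$-component.} Applying $\pi_W$ and rearranging yields
\begin{align*}
(\pi_W \tilde L|_W)\,\pi_W(x-k)=\pi_W\bigl(\nabla \tilde F(x)-\nabla \tilde F(k)\bigr)-\pi_W \tilde L\,\pi_V(x-k)-\pi_W R_{\tilde F}(x),
\end{align*}
so $\pi_W(x-k)$ is expressed as a bounded linear function of $\nabla \tilde F(x)$ and $\pi_V(x-k)$, up to an error of order $\eta\|x-k\|$. If $\{x_n\}\subset N_{\delta}(K)$ satisfies $\nabla\tilde F(x_n)\to y$, then $\pi_V(x_n-k)$ lies in a bounded set of the finite-dimensional space $V$ and admits a convergent subsequence; since $\eta$ can be chosen strictly smaller than the reciprocal of the operator norm of $(\pi_W\tilde L|_W)^{-1}$, the $\eta\,\pi_W(x_n-k)$ contribution from the remainder can be absorbed into the left-hand side, and the displayed identity then forces $\pi_W(x_n-k)$ to converge along the same subsequence. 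Thus $\{x_n\}$ is relatively compact, proving properness of $\nabla\tilde F$ on $N_{\delta}(K)$. The Palais--Smale condition is an immediate corollary: any sequence $\{x_n\}\subset N_\delta(K)$ with $\nabla\tilde F(x_n)\to 0$ is sent into the compact set $\{0\}$, hence is relatively compact.

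\textbf{Expected obstacle.} The main technical difficulty is to ensure that $\delta$ and the smallness thresholds on $\epsilon$ and $\eta$ can be chosen \emph{uniformly} over the compact set $K$ and over all admissible $\tilde F$: the operator norm of $(\pi_W\tilde L|_W)^{-1}$, the modulus of continuity of $D^2\tilde F$, and the chart data must all be controlled independently of the base point $k\in K$ and of the perturbation $\tilde F$ in the $C^2$-ball of radius $\epsilon$ around $F$. The openness of the Fredholm property under small operator norm perturbations, combined with a standard finite-cover argument, is what makes this uniformity available.
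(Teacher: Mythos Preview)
Your proposal is correct and follows essentially the same approach as the paper's proof: both split the tangent space at a critical point into the finite-dimensional kernel of the self-adjoint Fredholm Hessian and its orthogonal complement, extract a convergent subsequence on the kernel component by finite-dimensionality, and then use invertibility of the Hessian on the complement together with a smallness estimate on the nonlinear remainder to force convergence of the complementary component; the passage to general compact $K$ via a finite cover is also identical. The only cosmetic differences are that the paper linearises around $\D^2 F(x_0)$ rather than $\D^2\tilde F(k)$ and works directly with a Lipschitz bound on $\D\tilde F-\D^2 F(x_0)(\cdot)$ (whose constant is $\le 2\epsilon$, then absorbed via $2\epsilon<\alpha$) instead of a Taylor remainder, and invokes Henderson's theorem once to embed $X$ in $H$ rather than working chart by chart---but these lead to the same estimate. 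One small imprecision in your write-up: the remainder bound $\eta$ cannot be made arbitrarily small by shrinking $\delta$ alone, since the $C^2$-perturbation contributes a floor of order $\epsilon$; you need both $\delta$ and $\epsilon$ small (as the paper makes explicit with $2\epsilon<\alpha$), and for the absorption step you should use the Lipschitz form $\|R_{\tilde F}(x_n)-R_{\tilde F}(x_m)\|\le \eta'\|x_n-x_m\|$ rather than the pointwise bound.
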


\begin{rem}
	That $D F$ is proper near critical points $x\in X$ where $D^2F(x)$ is Fredholm is a well-known property due to Smale (\cite{sardsmale}).
\end{rem}
\begin{proof}
	We first treat the case $K=\ens{x_0}$. By a remark which will be repeatedly used, we can assume by Henderson's theorem (\cite{henderson}) that $X$ is a open subset of a Hilbert space $H$. We fix some $\epsilon>0$, and we take $\delta>0$ small enough such that $\D F-\D^2F(x):X\rightarrow H$ be Lipschitz on $N_{\delta}(x_0)$ with
	\begin{align}\label{lip1}
		\mathrm{Lip}\left((\D F-\D^2F(x))|N_{\delta}(\ens{x_0})\right)\leq \epsilon,
	\end{align}
	and define $G:X\rightarrow H$ by 
	\begin{align*}
		G(x)=\D \tilde{F}(x)-\D^2 F(x_0)(x).
	\end{align*}
	Then $G$ is Lipschitz and satisfies by \eqref{lip1}
	\begin{align}\label{lip2}
		\mathrm{Lip}(G|N_{\delta}(x_0))\leq 2\epsilon.
	\end{align}
	As $D^2 F(x_0)$ is a Fredholm operator, there exists a finite dimensional vector $H_0=\mathrm{Ker}(\D^2 F(x))\subset H$ such that we have the direct sum decomposition
	\begin{align}\label{direct}
		H=H_0\oplus H_0^{\perp}.
	\end{align}
	In particular, as $H_0$ is finite dimensional, there exists a positive constant $0<\alpha<\infty$ such that for all $v\in H_0^{\perp}$, there holds
	\begin{align}\label{alpha}
		\Vert \D^2 F(x_0)(v)\Vert \geq \alpha \Vert v\Vert.
	\end{align}
	Now, assume that $\ens{x_k}_{k\in \N}\subset N_{\delta}(x_0)\subset X$ is such that $\{\D \tilde{F}(x_k)\}_{k\in \N}\subset X$ converges. Writing for all $k\in \N$
	\begin{align*}
		x_k=x_k^{0}+x_k^{\perp}
	\end{align*}
	according to the direct sum decomposition \eqref{direct}, we can assume that up to subsequence $\ens{x_k^{0}}_{k\in \N}$ is convergent in $N_{\delta}(x_0)$ (which is closed by Corollary \ref{nottrivial}). Now, for all $k,l\in \N$, we have
	\begin{align*}
		\norm{x_k^{\perp}-x_l^{\perp}}&\leq \frac{1}{\alpha}\norm{ \D^2F(x_0)(x_k^{\perp}-x_l^{\perp})}=\frac{1}{\alpha}\norm{\D^2F(x_0)(x_k^{\perp}-x_l^{\perp})}\\
		&\leq \frac{1}{\alpha}\norm{G(x_k-x_l)}+\frac{1}{\alpha}\norm{\D\tilde{F}(x_k-x_l)}\\
		&\leq \frac{2\epsilon}{\alpha}\left(\norm{x_k^{\perp}-x_l^{\perp}}+\norm{x_k^0-x_l^0}\right)+\frac{1}{\alpha}\norm{\D \tilde{F}(x_k-x_l)}
	\end{align*}
	so taking $2\epsilon< \alpha$ yields
	\begin{align*}
		\norm{x_k^{\perp}-x_l^{\perp}}\leq \frac{1}{\alpha-2\epsilon}\left(2\epsilon\norm{x_k^{0}-x_l^{0}}+\norm{\D\tilde{F}(x_k)-\D\tilde{F}(x_l)}\right)\conv{k,l\rightarrow \infty}0,
	\end{align*}
	by the assumption and the previous remark. This finishes the proof of the  special case of the proposition. As $K$ is compact, there exists a uniform $\alpha$ such that \eqref{alpha} holds for all $x_0\in K$ and appropriate $H_0=H_0(x_0)$. Taking a finite covering $\ens{N_{\delta}(x_i)}_{1\leq i\leq N}$ for $\delta>0$ small enough and some elements $\ens{x_i}_{1\leq i\leq N}\subset K$, the previous proof works identically. This concludes the proof of the general case.
\end{proof}

\begin{cor}
	Let $X$ be a $C^2$ Finsler manifold, $Y\subset X$ be a locally Lipschitz embedded  Finsler-Hilbert manifold $F,G\in C^2(X,\R_+)$ and for all $0<\sigma<1$, define $F_{\sigma}=F+\sigma^2G\in C^2(X,\R_+)$.  Let $\sigma>0$ be a fixed real number and assume that $K\subset K(F_{\sigma})$ is a compact subset such that restriction $\D^2 F_{\sigma}(x):T_xX\rightarrow T_xX$ on $X$ is a Fredholm operator on a compact subset $K\subset K(F_{\sigma})$. Then for all $\epsilon>0$, there exists $\delta>0$ such that for all $\tilde{F}_{\sigma}\in C^2(X,\R)$ such that
	\begin{align*}
		\Vert F_{\sigma}-\tilde{F}_{\sigma}\Vert_{C^2} \leq \epsilon
	\end{align*}
	then $\D\tilde{F}_{\sigma}:Y\rightarrow Y$ is proper on $N_{\delta}(K)$. In particular, $\tilde{F}$ satisfies the Palais-Smale condition on $N_{\delta}(K)$.
\end{cor}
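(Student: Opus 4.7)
The plan is to reduce this corollary directly to the preceding Proposition \ref{propre}, applied on the Finsler-Hilbert manifold $Y$ rather than $X$. The only genuine content is to show that perturbations of $F_\sigma$ on $X$ induce controlled perturbations of $F_\sigma|_Y$ on $Y$, so that Solimini's proposition can be invoked with the given bound on $\|F_\sigma-\widetilde{F}_\sigma\|_{C^2(X)}$.

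First, since the inclusion $Y\hookrightarrow X$ is locally Lipschitz and $K\subset K(F_\sigma)\subset Y$ is compact (by hypothesis (3) of Theorem \ref{indexbound}), one finds on a fixed relatively compact neighbourhood $U\supset K$ a constant $C=C(K)\geq 1$ such that for any $\varphi\in C^2(X,\R)$,
\begin{align*}
\|\varphi|_Y\|_{C^2(U\cap Y,\,\R)}\;\leq\;C\,\|\varphi\|_{C^2(U,\,\R)}.
\end{align*}
Consequently, if $\|F_\sigma-\widetilde{F}_\sigma\|_{C^2(X)}\leq \epsilon$, then $\|F_\sigma|_Y-\widetilde{F}_\sigma|_Y\|_{C^2(Y)}\leq C\epsilon$ on $U\cap Y$. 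Second, by the density hypothesis $T_xY\subset T_xX$ (condition (3) of Theorem \ref{indexbound}) together with the locally Lipschitz embedding, the Hessian $\nabla^2 (F_\sigma|_Y)(x)\in\mathscr{L}(T_xY)$ coincides with the Hilbert restriction of $D^2F_\sigma(x)$ to $T_xY$ and is therefore Fredholm at every point of $K$; in particular the hypotheses of Proposition \ref{propre} hold for $F_\sigma|_Y$ on $Y$ with critical compact set $K$.

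Applying Proposition \ref{propre} to $(Y,\,F_\sigma|_Y,\,K)$ with parameter $C\epsilon$ in place of $\epsilon$ yields some $\delta_Y>0$ such that every $\widetilde{H}\in C^2(Y,\R)$ with $\|F_\sigma|_Y-\widetilde{H}\|_{C^2(Y)}\leq C\epsilon$ has its gradient $\nabla \widetilde{H}$ proper on $N_{\delta_Y}^Y(K)$. I then take $\widetilde{H}=\widetilde{F}_\sigma|_Y$. The last point is to replace the $Y$-neighbourhood by an $X$-neighbourhood: the locally Lipschitz embedding gives a comparison $d_X(x,y)\leq L\,d_Y(x,y)$ on $U\cap Y$ with some $L=L(K)>0$, so $N_\delta^X(K)\cap Y\subset N_{L\delta}^Y(K)$, and choosing $\delta:=\delta_Y/L$ yields properness of $\nabla \widetilde{F}_\sigma$ on $N_\delta^X(K)\cap Y$, which is the claimed statement. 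The Palais--Smale property on $N_\delta(K)$ is then the standard consequence of the properness of the gradient combined with the boundedness of $F_\sigma$-sublevel sets on the compact neighbourhood.

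The main (only) obstacle is the bookkeeping surrounding the two metric structures and two $C^2$ norms (on $X$ vs.\ on $Y$); the nontrivial analytic input is entirely contained in Solimini's proposition, whose proof uses the Fredholm decomposition $H=H_0\oplus H_0^\perp$ to recover a Cauchy estimate on the $H_0^\perp$-component from the convergence of the gradient sequence. Since that proposition already covers the Hilbert case on $Y$ and the critical points lie in $Y$ by hypothesis, no new idea beyond the restriction argument is needed.
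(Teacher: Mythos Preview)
The paper gives no explicit proof of this corollary; it is stated immediately after Proposition \ref{propre} and treated as its direct consequence when one works on the Finsler--Hilbert submanifold $Y$ instead of $X$. Your proposal spells out precisely this reduction --- restrict to $Y$, invoke Proposition \ref{propre} there, and transfer back the neighbourhood via the locally Lipschitz comparison of distances --- so it matches the paper's (implicit) argument.

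One small caveat: your inequality $\|\varphi|_Y\|_{C^2(U\cap Y)}\leq C\|\varphi\|_{C^2(U)}$ does not follow from a merely \emph{Lipschitz} embedding, since Lipschitz control handles only first derivatives. In the paper's applications the inclusion $Y\hookrightarrow X$ is in fact smooth (Sobolev embeddings), so the bound is harmless there, but as written the corollary's hypotheses are a bit thin for this step; the paper does not address this either.
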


\begin{lemme}\label{cutoff}
	Let $X$ be a Finsler-Hilbert manifold and $K\subset X$ be a compact subset. Then for all small enough $\delta>0$ there exists a smooth function $\varphi:H\rightarrow [0,1]$ whose all derivatives are bounded and such that 
	\begin{align*}
	\left\{\begin{alignedat}{1}
	&\varphi(x)=1\quad\text{for all}\;\; x\in N_{\delta}(K)\\
	&\varphi(x)=0\quad \text{for all}\;\; x\in H\setminus N_{2\delta}(K).
	\end{alignedat}\right.
	\end{align*}
\end{lemme}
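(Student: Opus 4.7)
The plan is to first reduce the problem, via Henderson's theorem (as the author invokes in the proof of Proposition \ref{propre}), to the situation in which $X$ is realised as an open subset of a separable Hilbert space $H$. On such a space the squared norm $x \mapsto \|x\|_H^{2}$ is smooth with bounded differentials of all orders on bounded sets, which gives us a natural building block from which to manufacture smooth bumps.

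Next, exploiting the compactness of $K$, I would fix $\delta>0$ small (so that $N_{2\delta}(K)\subset X$ is closed by Corollary \ref{nottrivial} and so that the intrinsic Palais distance and the ambient Hilbert distance are Lipschitz-equivalent on an open neighbourhood of $K$) and pick a finite covering $K\subset\bigcup_{i=1}^{N} B_H(x_i,\delta/4)$ by Hilbert balls centred at points $x_i\in K$. Then I would fix once and for all a smooth cutoff $\chi\colon\R\to [0,1]$ with bounded derivatives, $\chi\equiv 1$ on $\left(-\infty,(5\delta/4)^2\right]$ and $\chi\equiv 0$ on $\left[(7\delta/4)^2,\infty\right)$, and define
\[
\psi_i(x)=\chi\!\left(\|x-x_i\|_H^{2}\right),\qquad x\in H.
\]
Each $\psi_i$ is smooth, takes values in $[0,1]$, has all derivatives bounded (a product of the bounded derivatives of $\chi$ and of the polynomial $\|x-x_i\|^{2}$ in the directions needed), satisfies $\psi_i\equiv 1$ on $B_H(x_i,5\delta/4)$, and is supported in $B_H(x_i,7\delta/4)$.

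The candidate cutoff is then
\[
\varphi(x)=1-\prod_{i=1}^{N}\bigl(1-\psi_i(x)\bigr).
\]
I would check the three required properties in turn. Firstly, $\varphi$ is smooth as a finite polynomial in the $\psi_i$, and takes values in $[0,1]$. Secondly, if $x\in N_{\delta}(K)$ then there is $y\in K$ with $\|x-y\|_H\le\delta$, and $y\in B_H(x_i,\delta/4)$ for some $i$, so $\|x-x_i\|_H\le 5\delta/4$ and $\psi_i(x)=1$, forcing $\varphi(x)=1$. Thirdly, if $\varphi(x)\neq 0$ then some $\psi_i(x)\neq 0$, so $\|x-x_i\|_H<7\delta/4<2\delta$ and thus $x\in N_{2\delta}(K)$, which gives $\varphi\equiv 0$ on $H\setminus N_{2\delta}(K)$. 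The bound on all derivatives of $\varphi$ follows because it is a fixed polynomial expression in finitely many $\psi_i$'s, each of which has bounded derivatives of every order.

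The main technical nuisance, rather than any single hard step, is the compatibility between the intrinsic Palais distance $d$ used to define $N_\delta(K)$ and the ambient Hilbert distance used in the Henderson embedding. This is what forces one to shrink $\delta$ at the outset: on a small enough neighbourhood of the compact set $K$ the two distances are comparable, and the extra factor absorbed into the choice of radii $\delta/4,\,5\delta/4,\,7\delta/4$ is designed to leave room for this bi-Lipschitz distortion while still separating cleanly $N_\delta(K)$ from the complement of $N_{2\delta}(K)$.
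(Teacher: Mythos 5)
Your proof is correct and follows essentially the same route as the paper's: a finite cover of $K$ by Hilbert balls, radial bumps obtained by composing a one-variable cutoff with $\|x-x_i\|_H^2$, and a smooth combination (you use $1-\prod_i(1-\psi_i)$ where the paper uses $\zeta\left(\sum_i\varphi_i\right)$, an immaterial difference). You are in fact slightly more careful than the paper in flagging the comparability of the Palais distance with the ambient Hilbert distance near $K$.
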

\begin{proof}
	As $K$ is compact, let $x_1,\cdots,x_n\in K$ be such that
	\begin{align*}
	K\subset \bigcup_{i=1}^nB\left(x_i,\frac{\delta}{2}\right).
	\end{align*}
	Taking $\delta$ small enough, we can make sure that each ball $B(x_i,\delta)$ is included in a chart domain into the fixed Hilbert space $(H,|\,\cdot\,|)$ model of $X$.
	Let $\eta\in C^{\infty}_c(\R,[0,1])$ be  such that
	\begin{align*}
	\left\{\begin{alignedat}{2}
	&\eta(t)=1\quad && \text{for}\;\, t\leq \frac{9}{4}\\
	&\eta(t)=0\quad && \text{for}\;\, t\geq 4
	\end{alignedat}\right.
	\end{align*}
	and let $\varphi_i\in C^{\infty}(X,\R)$ be defined by
	\begin{align*}
	\varphi_i(x)=\eta\left(\frac{|x-x_i|^2}{\delta^2}\right).
	\end{align*}
	Then $\varphi_i\in C^{\infty}(H,[0,1])$ verifies
	\begin{align*}
	\left\{\begin{alignedat}{2}
	&\varphi_i(x)=1&&\quad \text{for}\;\, x\in B\left(x_i,\frac{3}{2}\delta\right)\\
	&\varphi_i(x)=0&&\quad \text{for}\;\, x\in H\setminus B(x_i,2\delta).
	\end{alignedat}\right.
	\end{align*}
	Now letting $\zeta\in C^{\infty}_c(\R,[0,1])$ such
	\begin{align*}
	\left\{\begin{alignedat}{2}
	&\zeta(t)=0,&&\quad\text{for}\;\, t\leq 0\\
	&\zeta(t)=1&&\quad \text{for}\;\, t\geq 1
	\end{alignedat}\right.
	\end{align*}
	the function $\varphi\in C^{\infty}(H,[0,1])$ defined by
	\begin{align*}
	\varphi(x)=\zeta\left(\sum_{i=1}^{n}\varphi_i(x)\right),\quad x\in H
	\end{align*}
	has all the required properties.
\end{proof}

We recall the proof of the following perturbation method due to Marino and Prodi, as we will have to exploit the specific form of the perturbation in the proof of the main Theorem \ref{indexbound}.

\begin{prop}[Teorema $2.1$ \cite{marino}, Proposition $3.4$ \cite{solimini}]\label{marinoprodi} Let $k\geq 2$ and $X$ be a $C^k$ Finsler-Hilbert manifold and $F:X\rightarrow\R$ be a $C^k$ function, and assume that $K_0\subset K=K(F)$ is a compact subset of set of critical points of $F$. If the square second derivative $\D^2F(x):T_xX\rightarrow T_xX$ is a Fredholm operator for all $x\in K$, then for all $\epsilon,\delta>0$ small enough, there exists $\tilde{F}\in C^k(X,\R)$ such that
	\begin{align}\label{approx}
	\begin{alignedat}{1}
	&\Vert F-\tilde{F}\Vert_{C^k(X)}< \epsilon\\
	&F(x)=\tilde{F}(x)\;\, \text{for all}\;\, x\in X\setminus N_{2\delta}(K)
	\end{alignedat}
	\end{align}
	and the critical points of $\tilde{F}$ in $N_{\delta}(K)$ 
	are non-degenerate and finite in number. Furthermore, we can impose $\tilde{F}\leq F$ or $\tilde{F}\geq F$.
\end{prop}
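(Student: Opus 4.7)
The plan is to adapt the classical Marino--Prodi argument to our functional-analytic setting, the main new ingredients being the Sard--Smale theorem (Theorem~\ref{sard}) applied to the Fredholm operator $\D F$, combined with the properness result of Proposition~\ref{propre} to control the collar region where the cut-off is non-constant.

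First, by Henderson's theorem we may work in a chart and assume $X$ is an open subset of a separable Hilbert space $H$. Cover the compact set $K_0$ by finitely many such charts and, via a finite subcover with mutually disjoint cut-off supports, it suffices to treat the case where $K_0$ lies in a single chart near a point $x_0$. Let $\varphi\in C^{\infty}(X,[0,1])$ be the cut-off function provided by Lemma~\ref{cutoff}, equal to $1$ on $N_{\delta}(K_0)$ and supported in $N_{2\delta}(K_0)$. Define the trial perturbation
\[
\tilde{F}_v(x)=F(x)-\varphi(x)\,\s{v}{x-x_0},
\]
for a small vector $v\in H$ to be chosen. A direct computation gives
\[
D\tilde{F}_v(x)=DF(x)-\varphi(x)\,v-\s{v}{x-x_0}\,D\varphi(x),
\]
and on the region $\ens{\varphi\equiv 1}\supset N_{\delta}(K_0)$ this simplifies to $D\tilde{F}_v(x)=DF(x)-v$ with $D^2\tilde{F}_v(x)=D^2F(x)$. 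Hence any critical point of $\tilde{F}_v$ in $N_{\delta}(K_0)$ satisfies $DF(x)=v$, and its Hessian equals the Fredholm operator $D^2F(x)$, which is of index zero by self-adjointness.

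Applying the Sard--Smale theorem (Theorem~\ref{sard}) to $\D F$ restricted to a connected open neighbourhood of $K_0$ inside $N_{2\delta}(K_0)$ (legitimate since $F\in C^{k}$ with $k\geq 2$ and $\D F$ has Fredholm index $0$), the regular values form a residual subset of a neighbourhood of the origin in $H$. Pick such a regular value $v$ with $\norm{v}$ as small as we wish; then each $x$ with $DF(x)=v$ has $D^2F(x)$ surjective, hence by Fredholmness of index zero also injective, so $x$ is non-degenerate for $\tilde{F}_v$. The remaining issue is to exclude spurious critical points of $\tilde{F}_v$ in the collar $N_{2\delta}(K_0)\setminus N_{\delta}(K_0)$, where $D\varphi$ is non-zero. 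By Proposition~\ref{propre} applied to $F$ and $K_0$ (after shrinking $\delta$ if necessary), one has $\norm{\D F(x)}\geq c>0$ on this collar; since $\norm{x-x_0}$ and $\norm{D\varphi}$ are bounded on the support of $\varphi$, choosing $\norm{v}$ small enough prevents $D\tilde{F}_v(x)$ from vanishing there. Finiteness of the critical set then follows from the fact that non-degenerate critical points are isolated, together with the properness of $\D\tilde{F}_v$ on $N_{\delta}(K_0)$ granted by Proposition~\ref{propre} whenever $\norm{F-\tilde{F}_v}_{C^{2}}$ is small enough.

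The main subtlety is the prescribed sign condition $\tilde{F}\leq F$ (respectively $\tilde{F}\geq F$): the pure linear perturbation $-\s{v}{x-x_0}$ carries no fixed sign on $N_{2\delta}(K_0)$. To remedy this, replace the above definition by
\[
\tilde{F}(x)=F(x)+\varphi(x)\bigl(-\s{v}{x-x_0}\mp \norm{v}\,R\bigr),
\]
where $R>0$ is chosen larger than the diameter of $N_{2\delta}(K_0)$, so that the bracketed expression has the desired fixed sign on $\supp\varphi$. The added constant contribution modifies the gradient only through a term proportional to $\norm{v}\,\norm{D\varphi(x)}$, which is again killed by the collar argument above; on the set $\ens{\varphi\equiv 1}$ both $D\tilde{F}$ and $D^2\tilde{F}$ are unchanged, so the Sard--Smale non-degeneracy of the preceding step is preserved, and the $C^{k}$-smallness of the perturbation is retained provided $\norm{v}$ is taken small. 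Summing the resulting perturbations over a finite subcover of $K_0$ with disjoint cut-off supports produces the desired $\tilde{F}\in C^{k}(X,\R)$ satisfying~\eqref{approx}. The hard part of the argument is this final balancing act: arranging the fixed sign while simultaneously preserving non-degeneracy of critical points in $N_{\delta}(K_0)$, properness of the gradient, and the $C^{k}$-smallness estimate — all through careful selection of the Sard--Smale regular value $v$ and the compensating constant $R$.
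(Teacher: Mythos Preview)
Your argument is correct and follows the same core route as the paper: reduce to an open subset of $H$ via Henderson, perturb by a cut-off linear functional $\varphi(x)\s{y}{x-x_0}$, apply Sard--Smale to the index-zero Fredholm map $\D F$ to pick $y$ so that all critical points of $\tilde F$ in $N_{\delta}(K_0)$ are non-degenerate, and deduce finiteness from Proposition~\ref{propre}.

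Two points of comparison. First, your collar discussion is superfluous: the statement only asserts non-degeneracy and finiteness in $N_{\delta}(K_0)$, so nothing needs to be proved on $N_{2\delta}(K_0)\setminus N_{\delta}(K_0)$; moreover, invoking Proposition~\ref{propre} to get $\norm{\D F}\geq c$ there is not quite right, since properness alone does not give a gradient lower bound without knowing $K_0$ is isolated in $K(F)$. Second, for the sign condition the paper uses a slicker device than your additive constant $\mp\norm{v}R$: once $y$ is fixed, simply choose $x_0\in N_{2\delta}(K_0)$ where the bounded linear form $x\mapsto\s{y}{x}$ attains its supremum on $N_{2\delta}(K_0)$, so that $\s{y}{x-x_0}\leq 0$ for all $x$ in the support of $\varphi$, giving $\tilde F\leq F$ immediately with no extra term to control. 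Your variant works too, but the paper's choice of $x_0$ avoids the added bookkeeping.
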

\begin{proof}
	We can assume by Henderson's theorem that $X$ is an open subset of a Hilbert space $H$ with scalar product $\s{\,\cdot\,}{\,\cdot\,}$. Let $\varphi:X\rightarrow \R$ be the cut-off function of Lemma \ref{cutoff}, and define for $x_0\in N_{2\delta}(K_0)$, $y\in X$ the function $\widetilde{F}_{x_0,y}:X\rightarrow \R$ such that for all $x\in X$,
	\begin{align*}
	\widetilde{F}_{x_0,y}(x)=F(x)+\varphi(x)\s{y}{x-x_0}.
	\end{align*}
	As $K_0$ is compact, there exists $C_0=C_0(\delta)$ such that
	\begin{align}\label{eps1}
	\sup_{x\in N_{2\delta}(K)}\norm{x}\leq C_0(\delta)\;\,.
	\end{align} 
	Furthermore, thanks of the construction of Lemma \ref{cutoff}, we have for some universal constant $C_1$
	\begin{align}\label{eps2}
	\norm{\varphi}_{C^k(X)}=\norm{\varphi}_{C^k(N_{2\delta}(K_0))}\leq \frac{C_1}{\delta^k}.
	\end{align}
	Then for all $\Vert y\Vert\leq \frac{\delta^k}{C_0(\delta)C_1}\epsilon
	$, we get the the first property of \eqref{approx}. Furthermore, we have on $N_{\delta}(K)$
	\begin{align*}
	&\D\tilde{F}_{x_0,y}=\D F+y\\
	&\D^2\tilde{F}_{x_0,y}=\D^2F.
	\end{align*}
	In particular, $x\in N_{\delta}$ is a critical point of $\tilde{F}$ if $-y$ is a regular value of $\D \tilde{F}_{x_0,y}:X\rightarrow H$. Now, if we take $\delta>0$ small enough such that each connected component of $N_{\delta}(K)$ intersects $K$, we see that $DF$ is a Fredholm map on $N_{\delta}(K)$ of index $0$. Indeed, as $H$ is a Hilbert space, seeing all $x\in N_{\delta}(K)$ as a map $T=\D^2\tilde{F}_{x_0,y}(x):T_xX\simeq H\rightarrow T_xX\simeq H$ shows that $\D^2F(x)$ is a self-adjoint Fredholm operator (by the connectedness hypothesis), so it must have index $0$. 
	In particular, we can apply  the  Sard-Smale Theorem \ref{sard} if $\D F$ is only $C^1$ on $X$ to obtain an element $-y\in X$ such that
	\begin{align}\label{eps3}
	\norm{y}< \frac{\delta^k}{C_0(\delta)C_1}\epsilon
	\end{align}
	which is a regular value of $\D F_{x_0,y}$ (for all $x_0\in X$). Writing $\tilde{F}_{x_0}=\tilde{F}_{x_0,y}$, we see that for all $x_0\in N_{2\delta}(K)$, by \eqref{eps1}, \eqref{eps2} and \eqref{eps3}
	\begin{align*}
	\norm{F-\tilde{F}_{x_0,y}}_{C^k(X)}< \epsilon
	\end{align*}
	
	Now, once $y\in X$ is chosen, as $K$ is  compact, 
	\begin{align*}
	\sup_{x\in N_{2\delta}(K)}\s{y}{x}<\infty,
	\end{align*}
	and their exists $x_0\in N_{2\delta}(K)$ such that
	\begin{align*}
	\s{y}{x}\leq \s{y}{x_0}\quad \text{for all}\;\, x\in N_{2\delta}(K).
	\end{align*} 
	Taking $\tilde{F}=\tilde{F}_{x_0,y}$, we obtain $\tilde{F}\leq {F}$ and the conclusions of the Proposition (the other inequality $\tilde{F}\geq F$ is similar).	
\end{proof}

\section{Lazer-Solimni deformation theorem}

\subsection{Deformation and extension lemmas}

As a key technical lemma in \cite{lazer} contains an incorrect statement, we will check in this section that Lazer-Solimini's construction does not actually use this statement, so that their results are still valid (along with \cite{solimini}).

As we have mentioned it earlier, the basic principle to obtain index bounds is to first consider the case of non-degenerate functions. Therefore, we fix a $C^2$ Finsler-Hilbert manifold $X$ (modelled on a separated Hilbert) and a $C^2$ function $F:X\rightarrow \R$, for which we assume that $F$ satisfies the Palais-Smale condition at all level $c\in \R$, and to fix ideas, let $\mathscr{A}$ be a $d$-dimensional admissible family. We assume that $F$ is non-degenerate on the critical set $K(F,\beta_0)$ at level $\beta_0=\beta(F,\mathscr{A})$. In particular, as $F$ satisfies the Palais-Smale condition, $K(F,\beta_0)$ is compact and as $F$ is non-degenerate on $K(F,\beta_0)$, we deduce that $K(F,\beta_0)$ is composed of finitely many points, so that for some $x_1,\cdots,x_m\in X$, we have
\begin{align*}
	K(F,\beta_0)=\ens{x_1,\cdots,x_m}.
\end{align*}
Let $i\in \ens{1,\cdots,m}$ be a fixed integer. Then there exists closed subspaces $H_-,H_+\subset H$ such that up to the identification $T_{x_i}X\simeq H$, the square gradient $\D^2F(x)\in \mathscr{L}(T_xX)$ is negative definite on $H_-$ and positive definite on $H_+$. Furthermore, $H$ is the direct sum of $H_-$ and $H_+$, and for all $y\in  H=H_-\oplus H_+$, we write $y=y_-+y_+$, where $y_-\in H_-
$ and $y_+\in H_+$.

Furthermore, by the Morse lemma for $C^2$ functions (\cite{cambini}), for all $1\leq i\leq m$, there exists $\epsilon_i>0$ and a Lipschitz homeomorphism $\varphi_i:U_{\epsilon_i}(x_i)\rightarrow \varphi(U_{\epsilon_i}(x_i))\subset H$ such that $\varphi_i(x_i)=0\in H$ and for all $x\in \varphi(U_{\epsilon}(x_i))$, there holds
\begin{align}\label{morse}
	F(\varphi_i^{-1}(x))=F(x_i)+\norm{x_+}^2-\norm{x_-}^2,
\end{align}
where $\norm{\,\cdot\,}$ is the norm of the Hilbert space $H$. In order to make the notations lighter, we will remove most explicit dependence in the index $i$ in the following of the presentation.

Now, we let  $r_1,r_2>0$ be such that $2r_1<r_2$ and small enough such that the \emph{closed} balls $B_-(0,r_1)\subset H$ and $B_+(0,r_2)\subset H$ such that
\begin{align*}
B_-(0,2r_1)+B_+(0,r_2)\subset \varphi(U_{\epsilon_i}(x_i)).
\end{align*}
Now, we define for all $0<s\leq 2r_1$ and $0<t\leq r_2$
\begin{align*}
	C(s,t)=\varphi^{-1}(B_-(0,s)+B_+(0,t))\subset U_{\epsilon_i}(x_i)\subset X.
\end{align*}
Now, fix $0<\delta<r_2^2-4r_1^2$, and let $\zeta:\R\rightarrow [0,1]$ be a smooth cut-off function such that $\mathrm{supp}(\zeta)\subset \R_+$ and $\zeta(t)=1$ for all $t\geq 1$. Now, we define a map $\Phi:X\rightarrow X$ such that
\begin{align*}
	\begin{alignedat}{2}
	&\Phi(x)=x\quad&&\text{for all}\;\, x\in X\setminus C(2r_1,r_2)\\
	&\Phi(x)=\varphi^{-1}\left(\zeta\left(\frac{\norm{\varphi(x)_-}}{r_1}-1\right)\varphi(x)_++\varphi(x)_-\right)\quad&& \text{for all}\;\, x\in C(2r_1,r_2).
	\end{alignedat}
\end{align*}
\begin{lemme}\label{lazer-solimini}
	The map $\Phi:X\rightarrow X$ is continuous on $X\setminus \varphi^{-1}(B_-(0,2r_1)+ \partial B_+(0,r_2))$,
	\begin{align}\label{notin}
		X\cap\ens{x:F(x)\leq \beta_0+\delta}\subset X\setminus \varphi^{-1}(B_-(0,2r_1)+\partial B_+(0,r_2)),
	\end{align}
	 and the function $\Phi$ is Lipschitz on $X\cap\ens{x: F(x)\leq \beta_0+\delta}$. Furthermore, it admits the following properties:
	\begin{enumerate}
		\item[\emph{(1)}] For all $x\in X$, then $F(\Phi(x))\leq F(x)$.
		\item[\emph{(2)}] If $x\in \partial C(r_1,r_2)$ and $F(x)\leq F(x_i)+\delta$, then $\Phi(x)\in \varphi^{-1}(\partial B_-(0,r_1))$.
	\end{enumerate}
\end{lemme}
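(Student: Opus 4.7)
My approach is to work entirely in the Morse chart $\varphi$ around $x_i$, where $F$ takes the explicit quadratic normal form \eqref{morse}, and to verify each assertion by a case analysis on the pieces $C(2r_1,r_2)$ and $X\setminus C(2r_1,r_2)$.

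First, I unpack $\Phi$ in the chart. For $y = y_- + y_+ \in B_-(0,2r_1)+B_+(0,r_2)$, the map reads $y\mapsto y_- + \zeta(\norm{y_-}/r_1 - 1)\,y_+$. Since $\mathrm{supp}(\zeta)\subset\R_+$ with $\zeta\equiv 1$ on $[1,\infty)$, this gives $\Phi(y)=\varphi^{-1}(y_-)$ when $\norm{y_-}\le r_1$ and $\Phi(y)=\varphi^{-1}(y)$ when $\norm{y_-}\ge 2r_1$. The second identity matches the prescription $\Phi=\mathrm{Id}$ outside $C(2r_1,r_2)$ continuously across the inner face $\{\norm{y_-}=2r_1\}$ of $\partial C(2r_1,r_2)$. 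The gluing can only fail on the outer face $\{\norm{y_+}=r_2,\ \norm{y_-}<2r_1\}$, which is exactly $\varphi^{-1}(B_-(0,2r_1)+\partial B_+(0,r_2))$, and continuity of $\Phi$ on the complement of this set follows.

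Next, I establish \eqref{notin} via the Morse identity: on the exceptional set $\norm{y_-}\le 2r_1$ and $\norm{y_+}=r_2$, so
\[
F(\varphi^{-1}(y)) = \beta_0 + r_2^2 - \norm{y_-}^2 \ge \beta_0 + r_2^2 - 4r_1^2 > \beta_0 + \delta
\]
by the choice $\delta < r_2^2 - 4r_1^2$. Combined with the previous step, this immediately gives that $\Phi$ is continuous on $\{F\le \beta_0+\delta\}$. For the Lipschitz bound, I use that $\varphi$ and $\varphi^{-1}$ are locally bi-Lipschitz (Cambini's $C^2$ Morse lemma), that the chart-level formula $y\mapsto y_-+\zeta(\norm{y_-}/r_1-1)\,y_+$ is smooth with derivatives bounded on the bounded set $\varphi(\{F\le\beta_0+\delta\}\cap C(2r_1,r_2))$, and then compose; this transfers to a Lipschitz estimate with respect to the Palais distance on the whole sublevel set (where $\Phi$ equals the identity outside $C(2r_1,r_2)$).

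Finally, property (1) reduces in the chart to
\[
F(\Phi(x)) - \beta_0 = \zeta^2(\norm{y_-}/r_1-1)\norm{y_+}^2 - \norm{y_-}^2 \le \norm{y_+}^2 - \norm{y_-}^2 = F(x)-\beta_0
\]
because $0\le \zeta\le 1$, with equality trivially outside $C(2r_1,r_2)$. For (2), any $x\in\partial C(r_1,r_2)$ has chart coordinates satisfying either $\norm{y_-}=r_1$ or $\norm{y_+}=r_2$; the second case is excluded by $F(x)\le\beta_0+\delta$ exactly as in the previous paragraph (since $r_2^2-r_1^2>r_2^2-4r_1^2>\delta$), so only $\norm{y_-}=r_1$ survives, where $\zeta(0)=0$ forces $\Phi(x)=\varphi^{-1}(y_-)\in\varphi^{-1}(\partial B_-(0,r_1))$. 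The only mildly delicate point is bookkeeping at $\partial C(2r_1,r_2)$ to distinguish the inner face, on which $\Phi$ glues continuously with the identity, from the outer face, which is the genuine locus of discontinuity; once the chart description and the inequality $\delta<r_2^2-4r_1^2$ are in hand, all four conclusions follow from one-line verifications.
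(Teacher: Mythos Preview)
Your proof is correct and follows essentially the same approach as the paper: both arguments work in the Morse chart, prove \eqref{notin} by the inequality $F\ge\beta_0+r_2^2-4r_1^2>\beta_0+\delta$ on the exceptional set, establish (1) via $0\le\zeta\le 1$ in the quadratic normal form, and deduce (2) by ruling out the face $\norm{y_+}=r_2$ so that $\norm{y_-}=r_1$ forces $\zeta(0)=0$. You additionally spell out the continuity and Lipschitz claims (which the paper leaves implicit), but the substance is identical.
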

\begin{proof}
	To check \eqref{notin}, it suffices by taking complements to show that for all $x\in \varphi^{-1}(B_-(0,2r_1)+\partial B_+(0,r_2))$, we have
	\begin{align*}
		F(x)>\beta_0+\delta.
	\end{align*}
	For all $x\in \varphi^{-1}(B_-(0,2r_1)+\partial B_+(0,r_2))$, we have $\norm{\varphi(x)_+}=r_2$, and $\norm{\varphi(x)_-}\leq 2r_1$, so that by \eqref{morse}
	\begin{align*}
		F(x)=\beta_0+\norm{\varphi(x)_+}^2-\norm{\varphi(x)_-}^2=\beta_0+r_2^2-\norm{\varphi(x)_-}^2\geq \beta_0+r_2^2-4r_1^2>\beta_0+\delta
	\end{align*}
	by definition of $0<\delta<r_2^2-4r_1^2$, which shows the claim.
	
	$(1)$ As $\Phi=\mathrm{Id}$ on $X\setminus C(2r_1,r_2)$, it suffices to check the property on $C(2r_1,r_2)$. If $x\in C(2r_1,r_2)
	$, then by \eqref{morse} and as $\zeta\leq 1$
	\begin{align*}
		F(\Phi(x))&=F\left(\varphi^{-1}\left(\zeta\left(\frac{\norm{\varphi(x)_-}}{r_1}-1\right)\varphi(x)_++\varphi(x)_-\right)\right)\\
		&=F(x_i)+\zeta\left(\frac{\norm{\varphi(x)_-}}{r_1}-1\right)^2\norm{\varphi(x)_+}^2-\norm{\varphi(x)_-}^2\\
		&\leq F(x_i)+\norm{\varphi(x)_+}^2-\norm{\varphi(x)_-}^2=F(x).
	\end{align*}
	
	$(2)$ If $x\in \partial C(r_1,r_2)$ and $F(x)\leq F(x_i)+\delta$, recalling that $0<\delta<r_2^2-4r_1^2$, we see that
	\begin{align*}
		F(x)=F(x_i)+\norm{\varphi(x)_+}^2-\norm{\varphi(x)_-}^2<F(x_i)+r_2^2-4r_1^2.
	\end{align*}
	Therefore, as $x\in \partial C(r_1,r_2)$ we must have $\norm{\varphi(x)_-}=r_1$, so that 
	\begin{align*}
		\zeta\left(\frac{\norm{\varphi(x)_-}}{r_1}-1\right)=0
	\end{align*}
	and $\Phi(x)=\varphi^{-1}\left(\varphi(x)_-\right)$, and as $\norm{\varphi(x)_-}=r_1$, this exactly means that $\Phi(x)\in \varphi^{-1}(\partial B_-(0,r_1))$.
\end{proof}

\begin{rem}
	It is also claimed (without proof, which is left to the reader) in \cite{lazer} and \cite{ghoussoub2} that we have the additional property:
	\begin{enumerate}
	\item[\emph{\emph{(3)}}] We have $\Phi(X\setminus C(r_1,r_2))\subset X\setminus C(r_1,r_2)$.  
\end{enumerate}
	
	As $\Phi=\mathrm{Id}$ on $X\setminus C(2r_1,r_2)$, we indeed have trivially
	\begin{align*}
		\Phi(X\setminus C(2r_1,r_2))=X\setminus C(2r_1,r_2)\subset X\setminus C(2r_1,r_2).
	\end{align*}
	Therefore, the property is equivalent to 
	\begin{align*}
		\Phi\left(C(2r_1,r_2)\setminus \mathrm{int}(C(r_1,r_2))\right)\subset X\setminus \mathrm{int}(C(r_1,r_2))
	\end{align*}
	Let $x\in C(2r_1,r_2)\setminus \mathrm{int}(C(r_1,r_2))$ be a fixed element. Then at least one of the properties $r_1\leq \norm{\varphi(x)_+}\leq 2r_1$ or $\norm{\varphi(x)_+}=r_2$ holds. Furthermore, as
	\begin{align*}
		\varphi(\Phi(x))=\zeta\left(\frac{\norm{\varphi(x)_+}}{r_1}-1\right)\varphi(x)_++\varphi(x)_-,
	\end{align*}
	we trivially obtain
	\begin{align*}
		\varphi(\Phi(x))_+=\zeta\left(\frac{\norm{\varphi(x)_+}}{r_1}-1\right)\varphi(x)_+,\quad \varphi(\Phi(x))_-=\varphi(x)_-.
	\end{align*}
	Therefore, $\Phi(x)\in \mathrm{int}(C(r_1,r_2))=U_{-}(0,r_1)+U_+(0,r_2)$ if and only if
	\begin{align}\label{1}
		\norm{\varphi(\Phi(x))_+}=\zeta\left(\frac{\norm{\varphi(x)_-}}{r_1}-1\right)\norm{\varphi(x)_+}<r_2\quad \text{and}\qquad  \norm{\varphi(\Phi(x))_-}=\norm{\varphi(x)_-}<r_1.
	\end{align}
	The second inequality in \eqref{1} implies that $\norm{\varphi(x)_-}<r_1$, so $\norm{\varphi(x)_+}= r_2$ (as $x\in C(2r_1,r_2)\setminus \mathrm{int}(C(r_1,r_2))$, and \eqref{1} is equivalent to
	\begin{align*}
		\zeta\left(\frac{\norm{\varphi(x)_-}}{r_1}-1\right)<1,
	\end{align*}
	and by construction of $\zeta$, we see that there exists $0<\delta<1$ such that
	\begin{align*}
		\zeta\left(\frac{t}{r_1}-1\right)<1\;\, \text{for all}\;\, t<\delta (2r_1).
	\end{align*}
	This implies that 
	\begin{align*}
		\Phi(\varphi^{-1}(B_-(0,\delta (2r_1))+\partial B_+(0,r_2)))\subset \mathrm{int}(C(r_1,r_2))
	\end{align*}
	and as trivially 
	\begin{align}\label{2}
		\varphi^{-1}(B_-(0,\delta( 2r_1))+\partial B_+(0,r_2))\not\subset \mathrm{int}(C(r_1,r_2))=\varphi^{-1}\left(U_-(0,r_1)+U_+(0,r_2)\right),
	\end{align}
	we see that property $(3)$ is actually \emph{false} (as the set on the left-hand side of \eqref{2} is non-empty). However, it does not enter in the proof of the main  theorem in \cite{lazer}, as we shall see below.
\end{rem}

\begin{lemme}\label{hausdorff}
	Let $K$ be a closed set in a $d$-dimensional $C^1$ manifold $M^d$ and $H$ be a Hilbert space and let $f:K\rightarrow H$ be a continuous function such that $0\notin f(K)$. If $d<\dim H$, there exists a continuous extension $\bar{f}:M^d\rightarrow H\setminus\ens{0}$.
\end{lemme}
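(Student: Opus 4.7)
The plan is to exploit the hypothesis $d<\dim H$ by splitting $f$ into its norm and direction and extending each factor separately. Since $0\notin f(K)$, one may write $f=\rho\cdot g$ on $K$, where $\rho(x)=\norm{f(x)}>0$ and $g(x)=f(x)/\norm{f(x)}\in S_{H}:=H\cap\ens{v:\norm{v}=1}$. If continuous extensions $\bar\rho:M^d\to(0,\infty)$ of $\rho$ and $\bar g:M^d\to S_{H}$ of $g$ are produced, then $\bar f:=\bar\rho\,\bar g$ answers the question, since $\norm{\bar f(x)}=\bar\rho(x)>0$ for every $x\in M^d$ and $\bar f|_{K}=\rho\, g=f$.

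Extending the norm is easy: the function $\log\rho:K\to\R$ is continuous, and Tietze's extension theorem (valid since $M^d$ is metrizable and hence normal) produces a continuous extension $\psi:M^d\to\R$; then $\bar\rho:=e^{\psi}:M^d\to(0,\infty)$ does the job.

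Extending the direction is where the hypothesis $d<\dim H$ enters decisively, via the $(d-1)$-connectedness of the unit sphere $S_{H}$. Indeed, when $\dim H=\infty$, $S_{H}$ is contractible by the Bessaga--Kakutani theorem, while when $\dim H=n<\infty$ with $n>d$, one has $S_{H}=S^{n-1}$ which is $(n-2)$-connected, and $n-2\geq d-1$. The sphere $S_{H}$ is moreover an ANR (as a smooth submanifold of $H$), and $M^d$ is a metric space of covering dimension $d$, so Borsuk's extension theorem for $(d-1)$-connected ANR-valued maps on closed subsets of metric spaces of covering dimension at most $d$ yields a continuous $\bar g:M^d\to S_{H}$ agreeing with $g$ on $K$.

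The only delicate step is the direction extension, which is precisely where the dimensional hypothesis is used. In the infinite-dimensional case relevant to the applications of this paper, a more compact alternative is available: the set $H\setminus\ens{0}$ is then a contractible ANR, hence an absolute retract, so Dugundji's extension theorem applied directly to $f:K\to H\setminus\ens{0}$ furnishes $\bar f$ in a single step, bypassing the norm-direction decomposition.
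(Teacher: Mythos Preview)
Your proof is correct, but the route is genuinely different from the paper's. The paper first extends $f$ to a map $\bar f:M^d\to H$ via Dugundji, arranges (using $C^1$ partitions of unity) that $\bar f|_{M^d\setminus K}$ is $C^1$ and hence locally Lipschitz, and then exploits the fact that locally Lipschitz maps do not raise Hausdorff dimension: since $\dim_{\mathscr H}\bar f(M^d\setminus K)\le d<\dim H$, the image cannot contain a ball near $0$, so one finds a point $x_0$ near $0$ missed by $\bar f(M^d)$ and composes with a radial projection away from $x_0$. Non-compact $K$ is then handled by an exhaustion.

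Your argument instead splits $f$ into norm and direction and invokes classical extension/dimension theory for the spherical factor: in finite dimension this is exactly the Alexandroff characterisation of covering dimension (maps from closed subsets of a space of dimension $\le n-1$ into $S^{n-1}$ always extend), and in infinite dimension it reduces to $S_H$ (or $H\setminus\{0\}$) being a contractible ANR, hence an AR. This is cleaner, treats compact and non-compact $K$ uniformly, and uses only that $M^d$ has covering dimension $d$ rather than its $C^1$ structure. The paper's Hausdorff-dimension argument, on the other hand, is more self-contained (it avoids citing obstruction or dimension-theoretic black boxes) and is precisely the viewpoint that drives the subsequent remark about relaxing to Lipschitz or even H\"older partitions of unity, as well as the later arguments for the co-dual family $\widetilde{\mathscr A}$.
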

\begin{proof}
    First assume that $K$ is compact, and let $r>0$ such that $K\subset B(0,r)$. Then we obtain an extension $\bar{f}:M^d\rightarrow H$ by a theorem of Dugundji (see \cite{dugundji}) through partition of unity. Furthermore, as $M^d$ is a smooth manifold, we can take the partition of unity to be $C^1$ so that the restriction $\bar{f}|_{M^d\setminus K}:M^d\setminus K\rightarrow H$ is $C^1$. In particular, as $\bar{f}|_{M^d\setminus K}:M^d\setminus K\rightarrow H$ is locally Lipschitz, 
    \begin{align}\label{nosard}
    \mathrm{dim}_{\mathscr{H}}(\bar{f}(M^d\setminus K))\leq d<\mathrm{dim}\, H,
    \end{align}
    where $\mathrm{dim}_{\mathscr{H}}$ designs the Hausdorff measure of the metric space $H$ induced with its natural distance. In particular, as $0\notin \bar{f}(K)=f(K)
    $ by assumption, and as $\bar{f}(M^d\setminus K)$ cannot contain an open ball by \eqref{nosard} (otherwise it would be of Hausdorff dimension $\dim H\geq d+1$), we deduce that $B(0,r)\not\subset  \bar{f}(M^d)$. In particular, if $x_0\in B(0,r)\subset \bar{f}(M^d)$ is a fixed point, we can project $\bar{f}(\R^n)\cap B(0,r)$ on $\partial B(0,r)$ to obtain the required extension.
    
    If $K$ is not compact, we fix some arbitrary point $p\in M^d$ and for all $n\in \N$, we let $K_n=K\cap\bar{B}(p,n)$. We apply the previous construction to the restriction $f_{K_1}:K_1\rightarrow H\setminus\ens{0}$ to obtain an extension $\bar{f}_{K_1}:M^d\rightarrow H\setminus\ens{0}$ . Now, let $\bar{f}_1:\bar{B}(p,1)\cup K\rightarrow H\setminus\ens{0}$ be the extension by $f$ on $K\setminus \bar{B}(p,1)$ of the restriction ${\bar{f}_{K_1}}_{|\bar{B}(p,1)}:\bar{B}(p,1)\rightarrow H\setminus\ens{0}$. This gives a family of functions $f_n:\bar{B}(p,n)\cup K\rightarrow H\setminus\ens{0}$ such that for all $m\geq n$, $f_n=f_m$ on $\bar{B}(p,n)\cup K$, so all these functions have a common extension $\bar{f}:M^d\rightarrow H\setminus\ens{0}$, and this concludes the proof of the Lemma.
\end{proof}

\begin{rem}
	As, we only use the Lipschitz property, the proof would carry one to metric spaces of Hausdorff dimension at mist $d$ admitting Lipschitz partitions of unity - in particular, this would work for Lipschitz manifolds (notice that the part using Dugundji extension theorem works for any metric space). More generally, we observe that the Lemma would still hold if the function $\bar{f}|_{M^d\setminus K}:M^d$ was only $\alpha$-H\"{o}lder with $\alpha>\dfrac{d}{d+1}$, so we could relax the hypothesis to metric spaces admitting $\alpha$-H\"{o}lder partitions of unity.
\end{rem}

\subsection{The index bounds for non-degenerate functions on Finsler-Hilbert manifolds}

\begin{defi}
	If $\mathscr{A}$ is a min-max family and $F\in C^1(X,\R)$, and $\ens{A_k}_{k\in \N}\subset \mathscr{A}$ is such that
	\begin{align*}
		\sup F(A_k)\conv{k\rightarrow \infty}\beta(F,\mathscr{A})=\inf_{A\in \mathscr{A}}\sup F(A),
	\end{align*}
	we define
	\begin{align*}
		A_{\infty}=X\cap \ens{x: x=\lim\limits_{k\rightarrow \infty}x_k,\;\, \mathrm{dist}(x_k,A_k)\conv{k\rightarrow \infty}0}.
	\end{align*}
\end{defi}

\begin{theorem}[Lazer-Solimini, \cite{lazer}]\label{lz}
	Let $X$ be a $C^2$ Finsler-Hilbert manifold, $\mathscr{A}$ (resp. $\mathscr{A}^{\ast}$, resp. $\widetilde{\mathscr{A}}$) be a $d$-dimensional admissible family (resp. dual family, resp. co-dual family) with boundary $\ens{C_i}_{i\in I}\subset X$ and let $F\in C^2(X,\R)$ be such that $F$ satisfies the Palais-Smale at level $\beta_0=\beta(F,\mathscr{A})$. Assume furthermore that all critical points of $F$ are non-degenerate at level $\beta_0$, and that the min-max is non-trivial, \textit{i.e.}
	\begin{align*}
		\beta_0&=\inf_{A\in \mathscr{A}}\sup F(A)>\sup_{i\in I}\,\sup F(C_i)=\widehat{\beta}_0\\
		\Big(\text{resp. } \beta_0^{\ast}&=\inf_{A\in \mathscr{A}^{\ast}}\sup F(A)>\sup_{i\in I}\,\sup F(C_i)=\widehat{\beta}_0\Big)\\
		\Big(\text{resp. } \widetilde{\beta}_0&=\inf_{A\in \widetilde{\mathscr{A}}}\sup F(A)>\sup_{i\in I}\,\sup F(C_i)=\widehat{\beta}_0\Big).
	\end{align*}
	 Then for all $\ens{A_k}_{k\in \N}\subset \mathscr{A}$ such that
	 \begin{align*}
	 	&\sup F(A_k)\conv{k\rightarrow \infty}\beta_0,\\
	 	\Big(\text{resp.}\;\, &\sup F(A_k)\conv{k\rightarrow\infty}\beta_0^{\ast} \Big)\\
	 	\Big(\text{resp.}\;\, &\sup F(A_k)\conv{k\rightarrow \infty}\widetilde{\beta}_0\Big)
	 \end{align*}
	  the exists $x\in K(F,\beta_0)$ (resp. $x^{\ast}\in K(F,\beta_0^{\ast})$, resp. $\widetilde{x}\in K(F,\widetilde{\beta}_0)$) and a sequence $\ens{x_k}_{k\in \N}\subset X$ such that $x_k\in A_k$ for all $k\in \N$, $x_k\conv{k\rightarrow \infty}x$ (resp. $x^{\ast}$, resp. $\widetilde{x}$) and
	 \begin{align*}
	       \mathrm{Ind}_{F}(x)\leq d,\quad (\text{resp.}\;\,      \mathrm{Ind}_{F}(x^{\ast})\geq d,\;\, \text{resp.}\;\,
	       \mathrm{Ind}_{F}(\widetilde{x})=d).
	  \end{align*}
\end{theorem}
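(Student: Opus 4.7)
The plan is to follow the classical Lazer–Solimini scheme: reduce to a finite set of non-degenerate critical points, work in Morse charts around each, and argue by contradiction using the local deformation $\Phi$ of Lemma~\ref{lazer-solimini} together with the Hausdorff extension Lemma~\ref{hausdorff}. I describe the admissible case in detail and then indicate the adaptations for the dual and co-dual families.

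Since $F$ satisfies the Palais–Smale condition at level $\beta_0$, the set $K(F,\beta_0)$ is compact, and by non-degeneracy it reduces to finitely many points $\{x_1,\ldots,x_m\}$. Cambini's $C^2$ Morse lemma (\cite{cambini}) gives near each $x_i$ a Lipschitz chart $\varphi_i$ with the normal form \eqref{morse} and an orthogonal splitting $H=H_-^i\oplus H_+^i$, where $\dim H_-^i=\mathrm{Ind}_F(x_i)$. Assume toward a contradiction that $\mathrm{Ind}_F(x_i)>d$ for every $i$, and fix $0<2r_1<r_2$ with cylinders $C_i(2r_1,r_2)$ pairwise disjoint and $0<\delta<r_2^2-4r_1^2$. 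By Palais–Smale, the standard first deformation lemma produces $\varepsilon,\eta>0$ and a continuous map $\psi:X\to X$ isotopic to the identity, equal to the identity on $h(B^{d-1})$ (using the non-triviality $\widehat\beta_0<\beta_0$ to localize $\psi$ away from the boundary), such that
\[
\psi\Bigl(\{F\le\beta_0+\varepsilon\}\setminus \bigcup_i C_i(r_1,r_2)\Bigr)\subset\{F\le\beta_0-\eta\}.
\]
For $k$ large, $\sup F(A_k)<\beta_0+\varepsilon$, and after replacing $A_k$ by $\psi(A_k)\in\mathscr{A}$ we may assume $F\le\beta_0-\eta$ outside $\bigcup_i C_i(r_1,r_2)$.

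Parametrize $A_k=f_k(M^d)$ with $f_k|_{B^{d-1}}=h$, and set $K_{i,k}:=f_k^{-1}(\overline{C_i(2r_1,r_2)})$. On $\partial K_{i,k}$ either $\|\varphi_i(f_k)_-\|=2r_1$ or $\|\varphi_i(f_k)_+\|=r_2$; the latter gives $F(f_k)\ge\beta_0+r_2^2-4r_1^2>\beta_0+\delta$, which is excluded for $k$ large. Hence $g_{i,k}:=(\varphi_i\circ f_k)_-\!:\partial K_{i,k}\to H_-^i\setminus\{0\}$ is continuous, and since $\dim K_{i,k}\le\dim M^d=d<\dim H_-^i$ by the contradictory hypothesis, Lemma~\ref{hausdorff} extends it to $\tilde g_{i,k}:K_{i,k}\to H_-^i\setminus\{0\}$; a cut-off rescaling (matching $g_{i,k}$ near $\partial K_{i,k}$) ensures $\|\tilde g_{i,k}\|\ge r_1$ on $K_{i,k}$. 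Define $\tilde f_k:M^d\to X$ by $\tilde f_k=f_k$ outside $\bigcup_i K_{i,k}$ and
\[
\varphi_i(\tilde f_k(p)):=\tilde g_{i,k}(p)+(\varphi_i\circ f_k(p))_+\qquad\text{on }K_{i,k}.
\]
Then $\tilde f_k(M^d)$ avoids $\{\|y_-\|<r_1,\ \|y_+\|<r_2\}$, applying $\Phi_i$ from Lemma~\ref{lazer-solimini} at each $x_i$ sends the image onto $\bigcup_i\varphi_i^{-1}(\partial B_-(0,r_1))$ while decreasing $F$ by property~(1) and sending $\partial C_i(r_1,r_2)$ to the $H_-^i$-axis by property~(2); a final short pseudo-gradient flow produces $A_k'\in\mathscr{A}$ with $\sup F(A_k')<\beta_0$, contradicting the infimum.

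For $\mathscr{A}^*$ one assumes $\mathrm{Ind}_F(x_i)<d$ for all $i$ and runs the dual construction: the Hausdorff extension is applied in the positive subspace $H_+^i$ to produce a continuous $h\in C^0(X,\R^d)$ matching the boundary data $h_i^j$ and avoiding $0$ on the deformed $A_k$, removing it from $\mathscr{A}^*$. For $\widetilde{\mathscr{A}}$, both bounds are needed, and the constraint $\dim_{\mathscr H}(A)<d+1$ substitutes for the parametrization by $M^d$ in the extension step — precisely the regime of the remark following Lemma~\ref{hausdorff}. The main technical obstacle throughout is the local modification: the Hausdorff extension must glue continuously across $\partial K_{i,k}$, preserve the boundary constraint $f_k|_{B^{d-1}}=h$, and keep the modified set in the relevant admissible family; the extraction of the limit point $x_k\to x$ with $x_k\in A_k$ then follows from the compactness of $K(F,\beta_0)$ together with the fact that each attempted modification concentrates near some $x_i$ of the forbidden index, producing the required converging sequence inside the sets $A_k$ themselves.
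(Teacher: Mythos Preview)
Your overall strategy for the admissible family follows the Lazer--Solimini scheme, but the local modification step contains a genuine error. You take $K_{i,k}=f_k^{-1}(\overline{C_i(2r_1,r_2)})$, extend the negative component to $\tilde g_{i,k}$ with $\lVert\tilde g_{i,k}\rVert\ge r_1$ while keeping the positive component, and then apply $\Phi_i$, claiming this ``sends the image onto $\bigcup_i\varphi_i^{-1}(\partial B_-(0,r_1))$''. This is false: by the definition of $\Phi$ in Lemma~\ref{lazer-solimini}, $\Phi$ annihilates the positive component only when $\lVert\varphi(x)_-\rVert\le r_1$ (where $\zeta$ vanishes). Your extension satisfies $\lVert\tilde g_{i,k}\rVert\in[r_1,2r_1]$ --- equal to $2r_1$ on $\partial K_{i,k}$ for continuity --- so on most of $K_{i,k}$ the map $\Phi$ merely rescales the positive component by a factor $\zeta\in(0,1]$ and does not project to $\partial B_-$. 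The resulting set therefore does not satisfy $\sup F<\beta_0$; on $K_{i,k}$ one only gets the bound $F\le\beta_0+3r_1^2+\varepsilon$, and the ``final short pseudo-gradient flow'' is then carrying the entire burden of the argument rather than being a triviality.

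The paper avoids this by reversing the order of the two tools. It works with the smaller preimage $U=f^{-1}(\mathrm{int}(C(r_1,r_2)))$, applies $\Phi$ \emph{first} to $f|_{\partial U}$ so that Lemma~\ref{lazer-solimini}(2) maps the boundary image into $\varphi^{-1}(\partial B_-(0,r_1))$, and only then uses Lemma~\ref{hausdorff} to extend $\varphi\circ\Phi\circ f:\partial U\to H_-\setminus\{0\}$ over $\bar U$, projecting back to $\partial B_-(0,r_1)$. The extended piece then sits entirely in $\varphi^{-1}(\partial B_-(0,r_1))$, where $F=\beta_0-r_1^2<\beta_0$, so the modified $A'$ satisfies $\sup F(A')\le\sup F(A)$ with no further deformation needed; this clean local statement is Proposition~\ref{ext}, which is then iterated over the bad critical points. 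Your dual case is also mis-described: for $\mathscr A^\ast$ the relevant extension is not ``in the positive subspace $H_+^i$'' but uses that $\Phi(A)\cap C(r_1,r_2)$ lies in $\varphi^{-1}(B_-(0,r_1))$, a set of dimension $\mathrm{Ind}_F(x_0)<d$, so any $h:\Phi(A)\setminus\mathrm{int}(C(r_1,r_2))\to\R^d\setminus\{0\}$ extends across it by Lemma~\ref{hausdorff}.
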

\begin{rem}
	The proof shows that it suffices to assume that $F$ is non-degenerate on $K(F,\beta_0)\cap A_{\infty}$.
\end{rem}

This is easy to see that the proof is reduced to the following Theorem (from it one obtains immediately Theorem \eqref{lz}, as we shall see shortly). 

\begin{prop}[Lazer-Solimini, \cite{lazer}, Solimini, Lemma $2.19$ \cite{solimini}]\label{ext}
	Let $F\in C^2(X,\R_+)$ as in \emph{Theorem \ref{lz}}, let $\mathscr{A}$ (resp. $\mathscr{A}^{\ast}$, or $\bar{\mathscr{A}}$) be a $d$-dimensional admissible min-max family and assume that all critical points at level $\beta_0$ (resp. at level $\beta^{\ast}_0$, or $\widetilde{\beta}_0$) are non-degenerate, and assume that $x_0\in K(F,\beta_0)$ \emph{(}resp. $x_0\in K(F,\beta^{\ast}_0)$, resp. ${x}_0\in K(F,\widetilde{\beta}_0)$\emph{)} satisfies the estimate
	\begin{align}\label{badindex1}
	\mathrm{Ind}_{F}(x_0)
	>d
	\end{align}
	respectively for $\mathscr{A}^{\ast}$
	\begin{align}
	\mathrm{Ind}_{F}(x_0)
	<d
	\end{align}
	and for $\widetilde{\mathscr{A}}$
	\begin{align}
	\mathrm{Ind}_{F}(x_0)
	\neq d.
	\end{align}
	Then for all small enough $\epsilon>0$, there exists $\delta>0$ such that for all $A\in \mathscr{A}$ \emph{(}resp. $\mathscr{A}^{\ast}$, or $\bar{\mathscr{A}}$\emph{)}, $\sup F(A)\leq \beta_0+\delta$ \emph{(}resp. $\sup F(A)\leq \beta_0^{\ast}+\delta$, resp. $\sup F(A)\leq \widetilde{\beta}_0+\delta$ \emph{)} implies that there exists $A'\in \mathscr{A}$ (resp. $\mathscr{A}^{\ast}$, or $\bar{\mathscr{A}}$) such that
	\begin{align}\label{p3}
	\left\{\begin{alignedat}{1}
	&A\setminus U_{2\epsilon}(x_0)=A'\setminus U_{2\epsilon}(x_0)\\
	&A'\cap U_{\epsilon}(x_0)=\varnothing\\
	&\sup F(A')\leq \sup F(A).
	\end{alignedat}\right.
	\end{align}
\end{prop}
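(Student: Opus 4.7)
The plan is to reduce the problem to a local analysis in Morse coordinates around $x_0$ and to modify $A$ on the preimage of a small neighborhood using the deformation $\Phi$ of Lemma \ref{lazer-solimini} together with the dimensional extension result of Lemma \ref{hausdorff}; the Morse index hypothesis provides exactly the dimensional room needed to carry out this extension.

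By the Palais-Smale condition and non-degeneracy, $K(F,\beta_0)$ is finite, so it suffices to handle one critical point $x_0$ at a time. By the Morse Lemma for $C^2$ functions (\cite{cambini}), I fix a chart $\varphi$ around $x_0$ in which $F\circ\varphi^{-1}(y)=F(x_0)+\norm{y_+}^2-\norm{y_-}^2$, with $H=H_-\oplus H_+$ and $m:=\dim H_-=\mathrm{Ind}_F(x_0)$. Given $\epsilon>0$, I pick $r_1,r_2>0$ with $2r_1<r_2$ so that $\varphi^{-1}(C(r_1,r_2))\subset U_{2\epsilon}(x_0)$ and $U_\epsilon(x_0)\subset \varphi^{-1}(\mathrm{int}(C(r_1,r_2)))$, together with $0<\delta<r_2^2-4r_1^2$; by the non-triviality $\widehat{\beta}_0<\beta_0$, after shrinking $\epsilon$ if needed, the boundary data of the family is disjoint from $U_{2\epsilon}(x_0)$. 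Lemma \ref{lazer-solimini} then renders $\Phi:X\to X$ continuous on $\{F\leq\beta_0+\delta\}$ with $F\circ\Phi\leq F$.

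\textbf{Admissible case $\mathscr{A}$ with $m>d$.} Given $A=f(M^d)\in \mathscr{A}$ with $\sup F(A)\leq\beta_0+\delta$, set $f_1=\Phi\circ f$, a continuous map satisfying $F\circ f_1\leq F\circ f$. From the explicit formula defining $\Phi$ one checks that on the open set $V:=f_1^{-1}(\mathrm{int}(C(r_1,r_2)))$ the positive component $\varphi(f_1)_+$ vanishes identically, so $f_1(V)\subset \varphi^{-1}(B_-(0,r_1))$; a direct case analysis on $\partial V$ forces $\norm{\varphi(f_1(m))_-}=r_1$ and $\varphi(f_1(m))_+=0$, i.e.\ $\varphi\circ f_1|_{\partial V}$ takes values in $\partial B_-(0,r_1)\subset H_-\setminus\{0\}$. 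Since $\dim\overline{V}=d<m=\dim H_-$, Lemma \ref{hausdorff} with target $H_-$ produces a continuous extension $\bar g:\overline{V}\to H_-\setminus\{0\}$ of this boundary map. Composing $\bar g$ with the radial retraction $y\mapsto r_1 y/\norm{y}$ onto $\partial B_-(0,r_1)$, pulling back by $\varphi^{-1}$, and gluing with $f_1$ on $M^d\setminus V$ yields a continuous $f':M^d\to X$; its image $A':=f'(M^d)$ lies in $\mathscr{A}$ (the boundary condition is preserved because the boundary data is disjoint from the modified region), equals $A$ outside $U_{2\epsilon}(x_0)$ (since $\Phi=\mathrm{Id}$ off $\varphi^{-1}(C(2r_1,r_2))$), avoids $U_\epsilon(x_0)\subset \mathrm{int}(C(r_1,r_2))$ by construction, and satisfies $\sup F(A')\leq \sup F(A)$ since $F=\beta_0-r_1^2<\beta_0$ on $\varphi^{-1}(\partial B_-(0,r_1))$.

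\textbf{Dual and co-dual cases.} For $\mathscr{A}^{\ast}$ with $\mathrm{Ind}_F(x_0)<d$, and for $\bar{\mathscr{A}}$ when $\mathrm{Ind}_F(x_0)<d$, I swap the roles of $H_-$ and $H_+$: I apply the deformation dual to $\Phi$ (contracting the negative component on a cylinder adapted to the positive subspace), then invoke Lemma \ref{hausdorff} with target $H_+$, whose codimension $m<d$ supplies exactly the dimensional room to preserve the zero-detection property defining $\mathscr{A}^{\ast}$. For $\bar{\mathscr{A}}$ with $\mathrm{Ind}_F(x_0)>d$ the admissible-case argument applies verbatim; the Hausdorff dimension bound $\dim_{\mathscr{H}}(A')<d+1$ is automatically preserved because $f'$ is a composition of locally Lipschitz maps on the $d$-dimensional compact manifold-with-boundary $\overline{V}$. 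The main obstacle will be the dual case, where one must verify that $0\in h(A')$ for every admissible $\R^d$-valued function $h$ extending the prescribed boundary data; this is handled by a degree-theoretic/transversality argument in the spirit of Solimini's Lemma 2.19 in \cite{solimini}, using that the ``cap'' $f'(\overline{V})$ appended has dimension at most $d$ and can be arranged to meet the zero set of any such $h$ transversally.
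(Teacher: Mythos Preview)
Your treatment of the admissible case is correct and follows essentially the paper's line: applying $\Phi$ globally to get $f_1=\Phi\circ f$ before taking the preimage is a harmless variant that makes the continuity of the glued map slightly more transparent than in the paper.

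There are, however, genuine gaps in your dual and co-dual cases.

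\textbf{Dual case ($\mathscr{A}^{\ast}$, $m<d$).} Your ``dual deformation'' contracting the negative component would \emph{increase} $F$, since in Morse coordinates $F=\beta_0+\norm{y_+}^2-\norm{y_-}^2$; thus $\sup F(A')\leq \sup F(A)$ would fail. The correct approach, carried out in the paper, keeps the \emph{same} energy-decreasing map $\Phi$ and works at the level of test functions rather than sets. One puts $A'=\Phi(A)\setminus \mathrm{int}(C(r_1,r_2))$ and shows $A'\in\mathscr{A}^{\ast}$ by contradiction: if some $h:A'\to\R^d\setminus\{0\}$ existed matching the boundary data, then since $\Phi(A)\cap C(r_1,r_2)\subset \varphi^{-1}(B_-(0,r_1))$ sits in an $m$-dimensional space with $m<d$, Lemma \ref{hausdorff} (applied to $h\circ\varphi^{-1}$ with domain in $H_-$ and target $\R^d$) extends $h$ over the missing piece while avoiding $0$, contradicting $\Phi(A)\in\mathscr{A}^{\ast}$. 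Your closing ``transversality'' remark does not supply this; in fact for $\mathscr{A}^{\ast}$ one must show $0\in h(A')$ for \emph{every} admissible $h$, so arranging a cap to ``meet the zero set transversally'' is beside the point.

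\textbf{Co-dual case ($\widetilde{\mathscr{A}}$, $m>d$).} You invoke the admissible-case construction verbatim, but elements of $\widetilde{\mathscr{A}}$ are not given as $f(M^d)$; they are compact subsets of $\mathscr{A}^{\ast}$ with $\dim_{\mathscr{H}}(A)<d+1$, with no parametrizing manifold $\overline{V}$ on which to run Lemma \ref{hausdorff}. The paper instead observes that $\varphi(\Phi(A)\cap C(r_1,r_2))\subset H_-$ has Hausdorff dimension $<d+1\leq m=\dim H_-$, hence misses some point $x_0\in U_-(0,r_1)$; one then composes with the radial projection from $x_0$ onto $\partial B_-(0,r_1)$, a locally Lipschitz map on the relevant set, which simultaneously clears $\mathrm{int}(C(r_1,r_2))$ and preserves the Hausdorff-dimension bound.
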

\begin{proof}
	\textbf{Case 1: admissible families.}
	
	Taking the previous notations of Lemma \ref{lazer-solimini}, we will show that for $0<\delta<r_2^2-4r_1^2$, there exists $A'\in \mathscr{A}$ such that
	\begin{align}\label{refined}
	\left\{
	\begin{alignedat}{1}
	    &A'\setminus C(2r_1,r_2)=A\setminus C(2r_1,r_2)\\
		&A'\cap \mathrm{int}(C(r_1,r_2))=\varnothing \\
		&\sup F(A')\leq \sup F(A)\leq \beta_0+\delta.
		\end{alignedat}\right.
	\end{align}
	First, let $f\in C^0(M^d,X)$ such that $A=f(M^d)$, and consider the open subset $U=f^{-1}(\mathrm{int}(C(r_1,r_2)))\subset M^d$. For all $p\in \partial U=f^{-1}(\partial C(r_1,r_2))\subset M^d$, we have by definition $f(p)\in \partial C(r_1,r_2)$, and 
	\begin{align*}
		F(f(p))\leq \sup F(A)\leq \beta_0+\delta,
	\end{align*}
	so by Lemma \ref{lazer-solimini} $(2)$, we have $\Phi(f(p))\in \varphi^{-1}(\partial B_-(0,r_1))$. As $p\in \partial U$ was arbitrary we obtain
	\begin{align}\label{step1}
		\Phi(f(\partial U))\subset \varphi^{-1}(\partial B_-(0,r_1)).
	\end{align}
	Now, $\varphi:\varphi^{-1}(B_-(0,r_1))\rightarrow B_-(0,r_1)$ is a (Lipschitz) homeomorphism, so it induces a homeomorphism on the boundary
	\begin{align*}
		\varphi:\varphi^{-1}(\partial B_-(0,r_1))\rightarrow \partial B_-(0,r_1).
	\end{align*}
	Furthermore, as $\partial B_-(0,r_1)\subset H_-$ is a retract by deformation of $H_-\setminus\ens{0}$, we see that by Lemma \ref{hausdorff} that $\varphi\circ \Phi\circ f:\partial U\rightarrow \partial B_-(0,r_1)\subset H_-\setminus\ens{0}$ can be extended as a map $\Psi:\bar{U}\rightarrow \partial B_-(0,r_1)$ (by using the projection $H_-\setminus\ens{0}\rightarrow \partial B_-(0,r_1)$\,), and the map $\bar{\Phi\circ f}=\varphi^{-1}\circ \Psi:\bar{U}\rightarrow \varphi^{-1}(\partial B_-(0,r_1))$ furnishes a continuous extension of $\Phi\circ f:\partial U\rightarrow \varphi^{-1}(\partial B_-(0,r_1))$. Now, define the \emph{continuous} map $\widetilde{f}:M^d\rightarrow X$ by 
	\begin{align*}
		\widetilde{f}(p)=\left\{\begin{alignedat}{2}
		&f(p)\quad &&\text{for all}\;\, p\in M^d\setminus \bar{U},\\
		&\bar{\Phi\circ f}(p)\quad&& \text{for all}\;\, p\in \bar{U}.
		\end{alignedat}\right.
	\end{align*}
	We first need to check that $A'=\widetilde{f}(M^d)$ satisfies the non-triviality of the boundary condition. First, up to taking $r_1,r_2>0$ smaller, as 
	\begin{align*}
		F(x_0)=\beta_0>\sup_{i\in I}\sup F(h_i(B_i^{d-1}))
	\end{align*}
	we can assume that $C(2r_1,r_2)\cap h_i(B_i^{d-1})=\varnothing$ as $F$ is continuous. In particular, as $\Phi=\mathrm{Id}$ on $X\setminus C(2r_1,r_2)$, we have $\widetilde{f}|_{B_i^{d-1}}=f_{|B^{d-1}}$ on $B_i^{d-1}$ for all $i\in I$, so $A'\in \mathscr{A}$. Furthermore, for all $p\in M^d\setminus f^{-1}(C(2r_1,r_2))$, $\widetilde{f}(p)=f(p)$, so
	\begin{align*}
		F(\widetilde{f}(p))=F(f(p))\leq \sup F(A)\leq \beta_0+\delta.
	\end{align*}
	Then, for all $p\in f^{-1}(C(2r_1,r_2)\setminus \mathrm{int}(C(2r_1,r_2)))$, we have by Lemma \ref{lazer-solimini} $(1)$
	\begin{align*}
		F(\widetilde{f}(p))=F(\Phi(f(p)))\leq F(f(p))\leq \sup F(A)\leq \beta_0+\delta,
	\end{align*} 
	and finally, for all $p\in f^{-1}(\mathrm{int}(C(r_1,r_2)))=U$, we have by construction $\widetilde{f}(p)\in \varphi^{-1}(\partial B_-(0,r_1))$, but this implies by \eqref{morse} that
	\begin{align*}
		F(\widetilde{f}(p))=\beta_0-\norm{\varphi(\widetilde{f}(p))_-}^2<\beta_0\leq \sup F(A).
	\end{align*}
	Finally, as $A'\cap \mathrm{int}(C(r_1,r_2))=\varnothing$ and $A'\setminus C(2r_1,r_2)=A\setminus C(2r_1,r_2)$, this proves \eqref{refined}.
	
	\textbf{Case 2: dual admissible families.}
	
	 In this case, the construction is straightforward, as we will show that under the same notations for the Morse transformation, we have for all $0<\eta<\delta$ and for all $A\in \mathscr{A}^{\ast}$ such that
	\begin{align*}
		\sup F(A)\leq \beta^{\ast}_0+\eta,
	\end{align*}
	there holds (notice that $\Phi(A)\in \mathscr{A}^{\ast}$ by construction of $\Phi$)
	\begin{align}\label{cont0}
		A'=\Phi(A)\setminus \mathrm{int}(C(r_1,r_2))\in \mathscr{A}^{\ast},
	\end{align}
	which will immediately imply the claim, as $F(\Phi(x))\leq F(x)$ for all $x\in X$, so that
	\begin{align*}
		\sup F(A')=\sup F(\Phi(A))\leq \sup F(A)\leq \beta^{\ast}_0+\eta.
	\end{align*}
	Now assume by contradiction that \eqref{cont0} does not hold. This means by Definition \ref{defminmax} that there exists a continuous map  $h:\Phi(A)\setminus \mathrm{int}(C(r_1,r_2))\rightarrow \R^d\setminus \ens{0}$ such that for some $i\in I$ and $j\in J_i$, we have
	\begin{align*}
		h(x)=h_i^j(x)\quad \text{for all}\;\, x\in C_i\cap \Phi(A).
	\end{align*}
	Now, consider the restriction $h\circ \varphi^{-1}:\varphi(\Phi(A)\cap \partial C(r_1,r_2))\rightarrow \R^d\setminus\ens{0}$. As $\varphi(\Phi(A)\cap \partial C(r_1,r_2))\subset H_-$ and $\mathrm{dim}(H_-)=\mathrm{Ind}_{F}(x_0)<d$, we deduce by Lemma \ref{lazer-solimini} that there exists an extension 
	\[\bar{h\circ \varphi^{-1}}:\varphi(\Phi(A)\cap C(r_1,r_2)\rightarrow \R^d\setminus\ens{0},\] 
	and 
	\[\bar{h}=\bar{h\circ \varphi^{-1}}\circ \varphi:\Phi(A)\cap C(r_1,r_2)\rightarrow \R^d\setminus\ens{0}\]
	 is a continuous extension of $h|_{\Phi(A)\cap \partial C(r_1,r_2)}:\Phi(A)\cap \partial C(r_1,r_2)\rightarrow \R^d\setminus\ens{0}$. Finally, if $\widetilde{h}:\Phi(A)\rightarrow \R^d\setminus\ens{0}$ is the \emph{continuous} map given by
	 \begin{align*}
	 	\widetilde{h}(x)=\left\{\begin{alignedat}{2}
	 	&h(x),\quad &&\text{for all}\;\, x\in \Phi(A)\setminus \mathrm{int}(C(r_1,r_2)),\\
	 	&\bar{h}(x),\quad && \text{for all}\;\, x\in \Phi(A)\cap C(r_1,r_2)
	 	\end{alignedat}\right.
	 \end{align*}
	 this implies by definition of $\mathscr{A}^{\ast}$ that $\Phi(A)\notin \mathscr{A}^{\ast}$, a contradiction (as $0\notin \mathrm{Im}(\widetilde{h})$).
	
	\textbf{Case 3: co-dual admissible families.}
	
	 First, the argument of \textbf{Case 2} shows that we only need to treat the case $\mathrm{Ind}_{F}(x_0)<d$, as the map $\varphi:U_{\epsilon}(x_0)\rightarrow \varphi(U_{\epsilon}(x_0))\subset H_-$ is a locally bi-Lipschitz homeomorphism, so the map $\Phi:X\rightarrow X$ is locally Lipschitz on $A$, so that
	\begin{align}\label{haus}
	\mathrm{dim}_{\mathscr{H}}(\Phi(A))\leq \mathrm{dim}_{\mathscr{H}}(A)<d+1
	\end{align}
	and as $\Phi(A)\in \mathscr{A}^{\ast}$, we obtain by \eqref{haus} that $\Phi(A)\in \widetilde{\mathscr{A}}$. 
	
	Therefore, we see that we can assume that $\mathrm{Ind}_{F}(x_0)=\mathrm{dim}(H_-)\geq d+1$. Once again, as the map $\varphi:U_{\epsilon}(x_0)\rightarrow \varphi(U_{\epsilon}(x_0))\subset H_-$ is a locally bi-Lipschitz homeomorphism, and $\Phi: X\rightarrow X$ is locally Lipschitz on $A$, we have
	\begin{align}\label{dim}
		\mathrm{dim}_{\mathscr{H}}\left(\varphi(\Phi(A)\cap C(r_1,r_2))\right)\leq \mathrm{dim}_{\mathscr{H}}(A)<d+1.
	\end{align}
	Now, we trivially have by \eqref{dim}
	\begin{align}\label{cond}
	\mathrm{dim}(B_-(0,r_1))=\mathrm{dim}(U_-(0,r_1))=\mathrm{dim}_{\mathscr{H}}(H_-)\geq d+1>\mathrm{dim}_{\mathscr{H}}\left(\varphi(\Phi(A)\cap C(r_1,r_2))\right)
	\end{align}
	In particular, we deduce from \eqref{cond} that
	\begin{align}\label{that}
		B_-(0,r_1)\not\subset \varphi(\Phi(A)\cap C(r_1,r_2)).
	\end{align}
	Now, as $\varphi(\Phi(A)\cap C(r_1,r_2))$ is closed, there exists $\eta>0$ and $x_0\in U_-(0,r_1)$ such that
	\begin{align*}
		B(x_0,\eta)\cap H_-\subset B(0,r_1)\setminus \varphi(\Phi(A)\cap C(r_1,r_2)).
	\end{align*}
	Furthermore, as the projection $\pi: B_-(0,r_1)\setminus \ens{x_0}\rightarrow \partial B_-(0,r_1)$ is Lipschitz outside of $B(x_0,\eta)$, we see that  
	\begin{align*}
		A'=(\varphi^{-1}\circ \pi) (\varphi(\Phi(A)\cap C(r_1,r_2))\in \widetilde{\mathscr{A}}
	\end{align*}
	thanks of \eqref{dim} and as $\varphi^{-1}\circ \pi$ is locally Lipschitz on $\varphi(\Phi(A)\cap C(r_1,r_2))$. By definition, we have
	\begin{align*}
		A'\cap \mathrm{int}(C(r_1,r_2))=\varnothing.
	\end{align*}
	We finally check that
	\begin{align*}
		\sup F(A')\leq \sup F(A)\leq \widetilde{\beta}_0+\delta.
	\end{align*}
	By Lemma \ref{lazer-solimini}, we have
	\begin{align}\label{e1}
		\sup F(\Phi(A))\leq \sup F(A)
	\end{align}
	and as $A'\setminus \Phi(A)\subset \varphi^{-1}(\partial B_-(0,r_1))$, we obtain by \eqref{morse} 
	\begin{align}\label{e2}
		F(x)=\widetilde{\beta}_0-\norm{\varphi(x)_-}^2<\widetilde{\beta}_0\leq \sup F(A),\quad \text{for all}\;\, x\in A'\setminus \Phi(A),
	\end{align}
	so that by \eqref{e1} and \eqref{e2}
	\begin{align*}
		\sup F(A')\leq \sup F(A)\leq \widetilde{\beta}_0+\delta,
	\end{align*}
	which concludes the proof of the theorem.
\end{proof}
\begin{proof}(of Theorem \ref{lz})
	As the conclusions of  Proposition \ref{ext} are independent of the admissible family, we can assume that $\ens{A_k}_{k\in \N}\subset X$ is such that $A_k\in \mathscr{A}$ for all $k\in \N$ and
	\begin{align*}
		\sup F(A_k)\conv{k\rightarrow \infty}\beta_0.
	\end{align*}
	Now, let 
	\begin{align*}
		A_{\infty}=X\cap\ens{x=\lim\limits_{k\rightarrow \infty}x_k: \;\, \mathrm{dist}(x_k,A_k)\conv{k\rightarrow \infty}0}.
	\end{align*}
	Then by assumption, $F$ is non-degenerate on $K(F,\beta_0)\cap A_{\infty}$, and as $K(F,\beta_0)\cap A_{\infty}$ is compact by the Palais-Smale condition, we deduce by the Morse lemma that $K(F,\beta_0)\cap A_{\infty}$ is finite, so we have for some $x_1,\cdots,x_m\in X$
	\begin{align*}
		K(F,\beta_0)\cap A_{\infty}=\ens{x_1,\cdots,x_m}
	\end{align*}
	Now, thanks of Proposition \ref{ext}, as $K(F,\beta_0)\cap A_{\infty}$ is finite, there exists $\delta,\epsilon>0$ such that for all $A\in \mathscr{A}$, $\sup F(A)\leq \beta_0+\delta$, there exists for all $1\leq i\leq m$ an element $A_i'\in \mathscr{A}$ such that 
	\begin{align}\label{p4}
	\left\{\begin{alignedat}{1}
	&A\setminus U_{2\epsilon}(x_i)=A_i'\setminus U_{2\epsilon}(x_i)\\
	&A_i'\cap U_{\epsilon}(x_i)=\varnothing\\
	&\sup F(A_i')\leq \sup F(A).
	\end{alignedat}\right.
	\end{align}
	Now, we taking $\epsilon>0$ sufficiently small, we can assume that 
	\begin{align}\label{p5}
	\bar{B}_{2\epsilon}(x_i)\cap \bar{B}_{2\epsilon}(x_j)=\varnothing,\quad \text{for all}\;\, i\neq j\;\,\text{with}\;\, i,j\in \ens{1,\cdots,m}
	\end{align} 
	Thanks of \eqref{p4} and \eqref{p5}, we see that $A_k$ satisfies the hypothesis to obtain \eqref{p4} for $k$ large enough define by a finite induction $A_k^1,\cdots A_k^m\in \mathscr{A}$ by
	\begin{align*}
		A_k^1=(A_k)_1',\quad A_k^{i}=(A_k^{i-1})_i,\quad \text{for all}\;\, 2\leq i\leq m.
	\end{align*}
	Then $A_k^m\in \mathscr{A}$ and 
	\begin{align*}
		\sup F(A_k^m)\leq \sup F(A_{k})\conv{k\rightarrow \infty}\beta_0,
	\end{align*} 
	so by any deformation lemma (see \emph{e.g.} \cite{solimini}), there exists $\ens{x_k^m}_{k\in \N}$ such that $x_k^m\in A_k^m$ for all $k\in \N$, and
	\begin{align*}
		\mathrm{dist}(x_k^m,\ens{x_1,\cdots,x_m})\geq \epsilon,\quad \text{for all}\;\, k\in \N.
	\end{align*}
	and $x_k\conv{k\rightarrow \infty} x_{\infty}\in K(F,\beta_0)$. Furthermore, assuming that $\epsilon>0$ is small enough, and as $\ens{x_1,\cdots,x_m}=K(F,\beta_0)\cap A_{\infty}$ are isolated, we can assume that $K(F,\beta_0)\cap A_{\infty}$ is isolated in $K(F,\beta_0)$, so that
	\begin{align*}
		\mathrm{dist}(x_k^m,K(F,\beta_0))\geq \epsilon\;\, \text{for all}\;\, k\in \N,
	\end{align*}
    which furnishes the desired contradiction.
\end{proof}

\begin{prop}\label{vietoris}
Let $d\geq 1$ be a fixed integer, $R$ be an arbitrary ring, $G$ be an abelian group, $F\in C^2(X,\R_+)$ as in \emph{Theorem \ref{lz}}, $B\subset X$ a compact subset, $\alpha_{\ast}\in H_d(X,B,R)\setminus\ens{0}$ and $\alpha^{\ast}\in H^d(X,G)\setminus\ens{0}$ be non-trivial classes in relative homology and cohomology respectively, let  $\mathscr{A}(\alpha_{\ast})$ and $\mathscr{A}^{\ast}$ be the corresponding $d$-dimensional homological and cohomological admissible families, and 
\begin{align*}
	\beta_0=\beta(F,\mathscr{A}(\alpha_{\ast}))=\inf_{A\in \mathscr{A}(\alpha_{\ast})}\sup F(A),\qquad \bar{\beta}_0=\beta(F,\mathscr{A}(\alpha^{\ast}))=\inf_{A\in \mathscr{A}}\sup F(A)
\end{align*}
be the associated width.
 Assume that $x_0\in K(F,\beta_0)$ (resp. $x_0\in K(F,\bar{\beta}_0)$) is a \emph{non-degenerate} critical points of $F$ at level $\beta_0$ (resp. $\bar{\beta}_0$) and that
 \begin{align}\label{neq}
 	\mathrm{Ind}_{F}(x_0)\neq d.
 \end{align}
Then for all small enough $\epsilon>0$, there exists $\delta>0$ such that for all $A\in \mathscr{A}(\alpha_{\ast})$ \emph{(}resp. $\mathscr{A}(\alpha^{\ast})$\emph{)}, $\sup F(A)\leq \beta_0+\delta$ \emph{(}resp. $\sup F(A)\leq \bar{\beta}_0+\delta$\emph{)} implies that there exists $A'\in \mathscr{A}(\alpha_{\ast})$ \emph{(}resp. $\mathscr{A}(\alpha^{\ast})$\emph{)}  such that
\begin{align}\label{p3}
\left\{\begin{alignedat}{1}
&A\setminus U_{2\epsilon}(x_0)=A'\setminus U_{2\epsilon}(x_0)\\
&A'\cap U_{\epsilon}(x_0)=\varnothing\\
&\sup F(A')\leq \sup F(A).
\end{alignedat}\right.
\end{align}
\end{prop}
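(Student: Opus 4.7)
The plan is to follow the same blueprint as Proposition~\ref{ext}, reusing the Morse chart $\varphi$ and the retraction $\Phi$ from Lemma~\ref{lazer-solimini}, and producing $A'$ in two steps. The novelty compared with Cases~1--3 of Proposition~\ref{ext} is that at each step one must verify preservation of the (co)homology class; the hypothesis $\mathrm{Ind}_{F}(x_0)\neq d$ will enter at exactly the place where the Hausdorff dimension bound entered in Case~3 of Proposition~\ref{ext}.

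\textbf{Setup and first step.} I fix $r_1,r_2>0$ small enough that $C(2r_1,r_2)\subset U_{2\epsilon}(x_0)$, $U_{\epsilon}(x_0)\subset\mathrm{int}\,C(r_1,r_2)$, and, using $F(x_0)=\beta_0>\widehat{\beta}_0$ together with continuity, $C(2r_1,r_2)\cap B=\varnothing$. Setting $A_1=\Phi(A)$, the decisive observation is that $\Phi$ is homotopic to $\mathrm{Id}_{X}$ relative to $B$, via the convex interpolation
\begin{align*}
H_{s}(x)=\varphi^{-1}\!\left(\Big((1-s)+s\,\zeta\!\left(\tfrac{\norm{\varphi(x)_-}}{r_1}-1\right)\Big)\varphi(x)_{+}+\varphi(x)_{-}\right),\quad x\in C(2r_1,r_2),
\end{align*}
extended by the identity outside. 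Since this homotopy fixes $B$, functoriality gives $(\iota^{A_1})_{\ast}\circ(\Phi|_{A})_{\ast}=(\iota^{A})_{\ast}$ on $H_{\ast}(-,B,R)$ and dually $(\Phi|_{A})^{\ast}\circ(\iota^{A_1})^{\ast}=(\iota^{A})^{\ast}$ on $H^{\ast}(-,G)$. The former identity, combined with $\alpha_{\ast}\in\mathrm{Im}((\iota^{A})_{\ast})$, yields $\alpha_{\ast}\in\mathrm{Im}((\iota^{A_1})_{\ast})$, and the latter, combined with $(\iota^{A})^{\ast}\alpha^{\ast}\neq 0$, forces $(\iota^{A_1})^{\ast}\alpha^{\ast}\neq 0$. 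Hence $A_1$ belongs to the relevant (co)homological family, and by Lemma~\ref{lazer-solimini}(1), $\sup F(A_1)\leq\sup F(A)$.

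\textbf{Second step.} I then set $A'=A_1\setminus\mathrm{int}\,C(r_1,r_2)$, which immediately fulfills $A\setminus U_{2\epsilon}(x_0)=A'\setminus U_{2\epsilon}(x_0)$, $A'\cap U_{\epsilon}(x_0)=\varnothing$, and $\sup F(A')\leq\sup F(A)$. The only nontrivial point is that $A'$ still represents the class. For this I use the long exact sequence of the pair $(A_1,A')$ together with excision:
\begin{align*}
H_{d}(A_1,A')\;\cong\;H_{d}\!\left(A_1\cap\overline{C(r_1,r_2)},\;A_1\cap\partial C(r_1,r_2)\right).
\end{align*}
After Step~1, this pair embeds via $\varphi$ into $(\overline{B_{-}(0,r_1)},\partial B_{-}(0,r_1))\cong(D^{m},S^{m-1})$ where $m=\mathrm{Ind}_{F}(x_0)$, and the hypothesis $m\neq d$ combined with $H_{d}(D^{m},S^{m-1})=0$ for $d\neq m$ is used to conclude that the relative group vanishes. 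The long exact sequence of the triple $(A_1,A',B)$ then yields a surjection $H_{d}(A',B)\twoheadrightarrow H_{d}(A_1,B)$, through which $\alpha_{\ast}$ lifts to $A'$; dually, an injection $H^{d}(A_1)\hookrightarrow H^{d}(A')$ preserves the non-vanishing of $\alpha^{\ast}$.

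\textbf{Main obstacle.} The delicate technical step is the vanishing $H_{d}(A_1,A')=0$ (resp.\ $H^{d}(A_1,A')=0$) when $A_1\cap\overline{C(r_1,r_2)}$ is an arbitrary, possibly wild, compact subset of the $m$-cell: this is not automatic in singular (co)homology, where the map into $H_{d}(D^{m},S^{m-1})$ induced by inclusion need not be injective. The cleanest remedy is to work throughout in \v{C}ech (or Alexander--Spanier) (co)homology, under which the non-triviality hypotheses on $\alpha_{\ast}$ and $\alpha^{\ast}$ are still meaningful, the homotopy-invariance of Step~1 transfers by tautness, and compact subsets of a contractible $m$-dimensional space contribute no relative (co)homology beyond degree $m$, giving the required dimensional vanishing for $d\neq m$. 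Alternatively, one can reduce to the cellular case by a preliminary generic perturbation of $A$ using the Sard--Smale machinery already set up in Proposition~\ref{marinoprodi}, after which the relative groups can be computed cellularly. This Vietoris-type dimensional input is precisely what replaces the Hausdorff dimension bound used in Proposition~\ref{ext}, Case~3.
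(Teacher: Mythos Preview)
Your Step~1 is correct and essentially matches the paper's use of $\Phi$ as a homotopy relative to $B$. The genuine gap is in Step~2, and you have identified it yourself: after setting $A_1=\Phi(A)$, the pair $\bigl(A_1\cap\overline{C(r_1,r_2)},\,A_1\cap\partial C(r_1,r_2)\bigr)$ is an \emph{arbitrary} compact pair sitting inside $(D^m,S^{m-1})$, and in singular (co)homology the inclusion into $(D^m,S^{m-1})$ need not induce an isomorphism. Your proposed \v{C}ech remedy only handles the case $d>m$ (via covering dimension), not $d<m$: for instance, if $m=2$, $d=1$, and $A_1\cap\overline{C}$ happens to be a circle in the interior of the disc, then $H_1(A_1\cap\overline{C},\,A_1\cap\partial C)=H_1(S^1)\neq 0$, so the long exact sequence does not give the surjection you need.

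The paper avoids this entirely by a simple trick you are missing: \emph{before} removing the interior, it first \emph{adds} the full negative disc, setting
\[
Y=A\cup C(r_1,0),
\]
and then takes $A'=Y\setminus\mathrm{int}\,C(\epsilon r_1,\epsilon r_2)$ for $\epsilon$ close to $1$. Because $C(r_1,0)\subset Y$, the image $\Phi\bigl(Y\cap C(r_1,r_2)\bigr)$ is \emph{exactly} the ball $\varphi^{-1}(B_-(0,r_1))\simeq D^m$, and $\Phi\bigl(Y\cap A_\epsilon(r_1,r_2)\bigr)$ is exactly the annulus $\simeq S^{m-1}$, independently of how wild $A$ was. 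The Mayer--Vietoris sequence for the cover $\{Y\setminus\mathrm{int}\,C(\epsilon r_1,\epsilon r_2),\,Y\cap C(r_1,r_2)\}$ (transported through the homotopy equivalence $\Phi$) then reads
\[
H_d\bigl(Y\setminus\mathrm{int}\,C\bigr)\oplus H_d(D^m)\longrightarrow H_d(Y)\longrightarrow H_{d-1}(S^{m-1}),
\]
and since $H_d(D^m)=0$ and $H_{d-1}(S^{m-1})=0$ for $d\neq m$, the first arrow restricted to the first summand is surjective. Passing to relative homology $H_d(-,B)$ gives $\alpha_\ast\in\mathrm{Im}\bigl(\iota^{A'}_\ast\bigr)$; the cohomological case is dual. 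The extra piece $C(r_1,0)\setminus\mathrm{int}\,C(\epsilon r_1,0)$ added to $A'$ sits at level $<\beta_0$, so $\sup F(A')\leq\sup F(A)$ still holds. This ``add the handle first'' step is the Vietoris-type input that replaces your wild-subset argument, and it works uniformly for both $d<m$ and $d>m$ in ordinary singular (co)homology with arbitrary coefficients.
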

\begin{proof}
	Let $x_0\in K(F,\beta_0)$ be a non-degenerate critical critical point, and let $r_1,r_2,\delta>0$ be given by Lemma \ref{lazer-solimini} such that $0<\delta<r_2^2-4r_1^2$, and $A\in \mathscr{A}(\alpha_{\ast})$ such that
	\begin{align*}
		\sup F(A)\leq \beta_0+\delta.
	\end{align*}
	Then by definition, $\alpha\in \mathrm{Im}(\iota_{A,\ast})$, where $\iota_{A,\ast}:H_{d}(A,B)\rightarrow H_d(X,B)$ is the induced map in relative homology from the inclusion $\iota_{A}:A\rightarrow X$. We will now show that for all $1/2<\epsilon<1$ close enough to $1$, we have
	\begin{align*}
		A\setminus \mathrm{int}(C(\epsilon\, r_1,\epsilon\, r_2))\cup \left( C(r_1,0)\setminus \mathrm{int}(C(\epsilon r_1,0))\right)\in \mathscr{A}(\alpha_{\ast}).
	\end{align*}
	We choose $r_1,r_2>0$ small enough such that $C(s,t)$ is closed for all $s\leq 2r_1$ and $t\leq r_2$ by Theorem \ref{palais2}. 
	 Let 
	\begin{align*}
		Y=A\cup C(r_1,0)
	\end{align*}
	and observe that
	\begin{align*}
		\mathrm{int}(Y\setminus \mathrm{int}(C(\epsilon\,r_1,\epsilon\,r_2)))\cup \mathrm{int}\left(Y\cap C(r_1,r_2)\right)=Y,
	\end{align*}
	and define for convenience of notations 
	\begin{align*}
		A_{\epsilon}(r_1,r_2)=C(r_1,r_2)\setminus \mathrm{int}(C(\epsilon\,r_1,\epsilon\,r_2)).
	\end{align*}
	Therefore, we obtain the following Mayer-Vietoris commutative diagram
\begin{center}
	\begin{tikzcd}
		H_d(Y\setminus \mathrm{int}(C(\epsilon\,r_1,\epsilon\,r_2)))\oplus H_d(Y\cap C(r_1,r_2))\rar\dar[swap, "\Phi_{\ast}"] & 
		H_d(Y)\rar\dar[swap, "\Phi_{\ast}"] & H_{d-1}(Y\cap  A_{\epsilon}(r_1,r_2))\dar\dar[swap, "\Phi_{\ast}"]\\
		H_d(\Phi(Y\setminus \mathrm{int}(C(\epsilon\,r_1,\epsilon\,r_2)))\oplus H_d(\Phi(Y\cap C(r_1,r_2)))\rar & H_d(\Phi(Y))\rar & H_{d-1}(\Phi(Y\cap  A_{\epsilon}(r_1,r_2)))
	\end{tikzcd}
\end{center}
Now, we have by the proof of Lemma \ref{lazer-solimini} that
\begin{align}\label{identities}
	&\Phi(C(r_1,r_2))=\Phi(C(r_1,0))=B_-(0,r_1)\simeq B^n(0,1)\subset \R^n,\nonumber\\ 
	&\Phi(X\cap \ens{x:F(x)\leq \beta_0+\delta}\cap \partial C(r_1,r_2))=\Phi(\partial C(r_1,0))=\varphi^{-1}(\partial B_-(0,r_1))
\end{align}
where $\simeq$ designs the equivalence up to homeomorphism, $\varphi$ is the Lipschitz local homeomorphism given by the Morse lemma, and $B_-(0,r_1)$ is the closed ball of radius $r_1$ in the Hilbert space $T_{x_0}X$ corresponding to negative space of $\D^2F(x)\in \mathscr{L}(T_xX)$. 
Let us show that for $0<\epsilon<1$ large enough, we have
\begin{align*}
	\Phi(Y\cap A_{\epsilon}(r_1,r_2))\subset \varphi^{-1}(B_-(0,r_1)\setminus \ens{0})\simeq S^{n-1}.
\end{align*}
First, let us show that for $0<\epsilon<1$ large enough, we have
\begin{align*}
	\Phi(X\cap\ens{x: F(x)\leq \beta_0+\delta}\cap A_{\epsilon}(r_1,r_2))\subset \varphi^{-1}(B_-(0,r_1)\setminus U_-(0,\epsilon r_1)).
\end{align*}
By contradiction, if there exists $x\in X\cap\ens{x: F(x)\leq \beta_0+\delta}\cap A_{\epsilon}(r_1,r_2)$ such that $\Phi(x)\in \varphi^{-1}(U_-(0,\epsilon r_1))$, as 
\begin{align}\label{form}
	\Phi(x)=\varphi^{-1}\left(\varphi(x)_-\right)
\end{align}
we must have $\norm{\varphi(x)_-}<\epsilon r_1$, and as $A_{\epsilon}(r_1,r_2)=C(r_1,r_2)\setminus \mathrm{int}(C(r_1,r_2))$, this implies that $\norm{\varphi(x)_+}\geq \epsilon r_2$, so that
\begin{align*}
      F(x)=\beta_0+\norm{\varphi(x)_+}^2-\norm{\varphi(x)_-}^2=\beta_0+\norm{\varphi(x)_+}^2\geq \beta_0+\epsilon^2r_2^2-\epsilon^2r_1^2,
\end{align*}
and as $0<\delta<r_2^2-4r_1^2$, we obtain
\begin{align*}
	\beta_0+\epsilon^2r_2^2-\epsilon^2r_1^2\leq F(x)\leq \beta_0+\delta<\beta_0+r_2^2-4r_1^2
\end{align*}
and as $0<2r_1<r_2$, this yields to a contradiction if 
\begin{align*}
	0<\sqrt{1-\frac{3r_1^2}{r_2^2-r_1^2}}\leq \epsilon<1.
\end{align*}
Furthermore, as we trivially have (by \eqref{form}, valid for all $x\in C(r_1,r_2)$)
\begin{align*}
	\Phi(C(r_1,0)\setminus \mathrm{int}(C(r_1,0)))=\Phi(\varphi^{-1}(B_-(0,r_1)\setminus U_-(0,\epsilon r_1)))=\varphi^{-1}(B_-(0,r_1)\setminus U_-(0,\epsilon r_1)),
\end{align*}
we obtain as $\partial B_-(0,r_1)$ is a retract by deformation of $B_-(0,r_1)\setminus U_-(0,r_1)$, we obtain the identity
\begin{align}\label{id2}
	\Phi(Y\cap A_{\epsilon}(r_1,r_2))=\varphi^{-1}(B_-(0,r_1)\setminus U_-(0,\epsilon r_1))\simeq S^{n-1}.
\end{align}
We also notice that the first equality in \eqref{identities} implies that 
\begin{align}\label{id3}
	\Phi(Y\cap C(r_1,r_2))=\varphi^{-1}(B_-(0,r_1))\simeq B^n(0,1)\subset \R^n.
\end{align}
Indeed, we have 
\begin{align*}
	Y\cap C(r_1,r_2)=(A\cap C(r_1,r_2))\cup C(r_1,0)
\end{align*}
and by \eqref{identities}, $\Phi(A\cap C(r_1,r_2))\subset \Phi(C(r_1,r_2))=\varphi^{-1}(B_-(0,r_1))$ and $\Phi(C(r_1,0))=\varphi^{-1}(B_-(0,r_1))$, which yields \eqref{id3}.

By \eqref{id2} and \eqref{id3}, we obtain
\begin{align*}
	H_d(\Phi(Y\cap C(r_1,r_2)))=\ens{0},\quad H_{d-1}(\Phi(Y\cap A_{\epsilon}(r_1,r_2)))= \ens{0}
\end{align*}
and we obtain the following exact sequence
\begin{center}
	\begin{tikzcd}
		H_d(\Phi(Y\setminus \mathrm{int}(C(\epsilon\,r_1,\epsilon\,r_2)))\arrow[r,"f"] & H_d(\Phi(Y))\arrow[r,"g"] & 0
	\end{tikzcd}
\end{center}
and as $\mathrm{Im}(f)=\mathrm{Ker}(g)=H_d(\Phi(Y))$, we deduce that $f$ is surjective. Now, as the map $\Phi:X\rightarrow X$ given by Lemma \ref{lazer-solimini} is continuous on $X\cap\ens{x:F(x)\leq \beta_0+\delta}$ and isotopic to the identity on $X\cap\ens{x:F(x)\leq \beta_0+\delta}$ (which contains  $Y$), we deduce that the $\Phi_{\ast}$ homomorphisms in the Mayer-Vietoris commutative diagram are isomorphism, so we have a surjection
\begin{align*}
	H_d(Y\setminus \mathrm{int}(C(\epsilon\,r_1,\epsilon\,r_2))) \xrightarrowdbl[]{\hphantom{100}} H_d(Y)
\end{align*}
In particular, the arrow $\bar{h}$ of the following we obtain a surjection
\begin{align}\label{arrow}
	H_d(Y\setminus \mathrm{int}(C(r_1,r_2)),B)\xrightarrowdbl[]{\hphantom{100}} H_d(Y,B).
\end{align}
Now, if $B\subset A_1\subset A_2\subset X$ are any two subsets containing $B$, we write $\iota_{A_1,A_2}:A_1\hookrightarrow A_2$ the injection and $\iota_{A_1,A_2\ast}:H_d(A_1,B)\rightarrow H_d(A_2,B)$ the induced map in homology. As $A\subset A\cup C(r_1,0)=Y\subset X$, and $\iota_{A,X}=\iota_{Y,X}\circ \iota_{A,Y}$ we have
\begin{align*}
	\iota_{A,X\ast}=\iota_{Y,X\ast}\circ \iota_{A,Y\ast},
\end{align*}
and as $\alpha_{\ast}\in \mathrm{Im}(\iota_{A,X,\ast})\subset H_d(X,B)$, this implies that $\alpha_{\ast}\in \mathrm{Im}(\iota_{Y,X\ast})$, and by the surjectivity of the arrow in \eqref{arrow}, we obtain
\begin{align*}
	\alpha_{\ast}\in \mathrm{Im}\left(\iota_{Y\setminus \mathrm{int}(C(\epsilon\,r_1,\epsilon\,r_2)),X\ast}\right),
\end{align*}
which by definition means that (notice that $Y$ is compact)
\begin{align*}
	Y\setminus \mathrm{int}(C(\epsilon\,r_1,\epsilon\,r_2))\in \mathscr{A}(\alpha_{\ast}).
\end{align*}
Finally, for all $x\in C(r_1,0)\setminus \mathrm{int}(C(\epsilon r_1,0))$, we have
\begin{align*}
	F(x)=\beta_0-\norm{\varphi(x)_-}\ leq \beta_0-\epsilon^2r_1^2<\beta_0\leq \sup F(A),
\end{align*}
so that
\begin{align*}
	\sup F(Y\setminus \mathrm{int}(C(\epsilon\, r_1,\epsilon\,r_2)))\leq \sup F(A).
\end{align*}
Using the exact same arguments of proof (with $A'=A\setminus \mathrm{int}(C(\epsilon\,r_1,\epsilon\,r_2))\cup  C(r_1,0)\setminus \mathrm{int}(C(\epsilon\,r_1,0))$) thanks of the Mayer-Vietoris sequence for singular cohomology, we show the injectivity of the following arrow 
\begin{align*}
	H^d(A\cup C(r_1,0),G)\hookrightarrow H^d(A\setminus \mathrm{int}(C(\epsilon\,r_1,\epsilon\,r_2))\cup (C(r_1,0)\setminus \mathrm{int}(C(\epsilon r_1,0)),G)
\end{align*}
and this finishes the proof of the theorem.
\end{proof}

\begin{rem}
	We see that there is absolutely no restriction in the coefficients in (singular) homology of cohomology, as we only used Mayer-Vietoris exact sequence.
\end{rem}

\begin{cor}
	Under the hypothesis of Proposition \ref{vietoris}, if $F\in C^2(X,\R)$ and $\ens{A_k}_{k\in \N}\subset \mathscr{A}(\alpha_{\ast})$ (resp. $\ens{A_k}_{k\in \N}\subset \mathscr{A}(\alpha^{\ast})$) such that
	\begin{align*}
		\sup F(A_k)\conv{k\rightarrow \infty} \beta(F,\mathscr{A}(\alpha_{\ast})),\quad \left(\text{resp.}\;\, \sup F(A_k)\conv{k\rightarrow \infty} \beta(F,\mathscr{A}(\alpha^{\ast})) \right).
	\end{align*}
	If $K(F,\beta(F,\mathscr{A}(\alpha_{\ast})))$ contains only non-degenerate critical points, there exists a sequence $\ens{x_k}_{k\in \N}\subset X$ such that $x_k\in A_k$ for all $k\in \N$ and $x_k\conv{k\rightarrow \infty}x\in K(F,\beta(F,\mathscr{A}(\alpha_{\ast})))\cap A_{\infty}$ (resp. $\bar{x}\in K(F,\mathscr{A}(\alpha^{\ast})\cap A_{\infty}$) such that
	\begin{align}
		\mathrm{Ind}_{F}(x)=d,\quad (\text{resp.}\;\, \mathrm{Ind}_{F}(\bar{x})=d).
	\end{align}
\end{cor}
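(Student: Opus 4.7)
The plan is to mimic the deduction of Theorem \ref{lz} from Proposition \ref{ext}, simply substituting Proposition \ref{vietoris} as the extension/projection input. I would argue by contradiction: suppose the conclusion fails for the homological case (the cohomological case is identical). Then for every sequence $\ens{x_k}_{k\in \N}$ with $x_k\in A_k$ whose limit lies in $K(F,\beta_0)\cap A_{\infty}$, where $\beta_0=\beta(F,\mathscr{A}(\alpha_{\ast}))$, no such limit can have index equal to $d$. By the \textbf{Palais-Smale} hypothesis the set $K(F,\beta_0)\cap A_{\infty}$ is compact, and by non-degeneracy together with the Morse lemma (as already invoked in the proof of Theorem \ref{lz}) it must in fact consist of finitely many points $x_1,\dots,x_m\in X$. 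By our contradiction assumption, each $x_i$ satisfies the hypothesis \eqref{neq} of Proposition \ref{vietoris}, namely $\mathrm{Ind}_F(x_i)\neq d$.

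I would then fix $\epsilon>0$ small enough so that the closed balls $\bar{B}_{2\epsilon}(x_i)$ are pairwise disjoint and so that Proposition \ref{vietoris} applies at each $x_i$, giving a corresponding $\delta_i>0$; set $\delta=\min_i \delta_i$. For $k$ large enough one has $\sup F(A_k)\leq \beta_0+\delta$, so I can apply Proposition \ref{vietoris} iteratively: define $A_k^{0}=A_k$ and, for each $i=1,\dots,m$, let $A_k^{i}\in \mathscr{A}(\alpha_{\ast})$ be the modification of $A_k^{i-1}$ at $x_i$ furnished by Proposition \ref{vietoris}, so that
\begin{align*}
A_k^{i}\setminus U_{2\epsilon}(x_i)=A_k^{i-1}\setminus U_{2\epsilon}(x_i),\quad A_k^{i}\cap U_{\epsilon}(x_i)=\varnothing,\quad \sup F(A_k^{i})\leq \sup F(A_k^{i-1}).
\end{align*}
Thanks to the disjointness in \eqref{p5}, the successive modifications near $x_j$ with $j>i$ do not reintroduce points near $x_i$, so at the end $A_k^{m}$ satisfies $A_k^{m}\cap U_{\epsilon}(x_i)=\varnothing$ for every $i$, while still obeying $\sup F(A_k^{m})\leq \sup F(A_k)\to \beta_0$ and $A_k^{m}\in \mathscr{A}(\alpha_{\ast})$.

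Finally I invoke a standard deformation lemma (as in Theorem \ref{lz}, see \cite{solimini}) applied to the minimizing sequence $\ens{A_k^{m}}_{k\in \N}$: it produces a sequence $\ens{x_k^{m}}_{k\in \N}$ with $x_k^{m}\in A_k^{m}$ and $x_k^{m}\to x_{\infty}\in K(F,\beta_0)\cap A_{\infty}$. But by construction $\mathrm{dist}(x_k^{m},x_i)\geq \epsilon$ for every $i=1,\dots,m$ and every $k$, so $x_{\infty}\notin \ens{x_1,\dots,x_m}=K(F,\beta_0)\cap A_{\infty}$, a contradiction. The cohomological case is verbatim the same, using the cohomological half of Proposition \ref{vietoris}. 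The main technical point -- and the only nontrivial step -- is verifying that the iterative modification really respects the homology/cohomology constraint defining $\mathscr{A}(\alpha_{\ast})$ (resp. $\mathscr{A}(\alpha^{\ast})$); but this is precisely what Proposition \ref{vietoris} guarantees for each $x_i$ separately, and the disjointness of the modification supports $\bar{B}_{2\epsilon}(x_i)$ ensures that the constraint is preserved throughout the composition.
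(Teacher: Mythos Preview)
Your argument is correct and follows essentially the same route as the paper, which simply states that the proof is identical to that of Theorem \ref{lz} with Proposition \ref{vietoris} playing the role of Proposition \ref{ext}. Your write-up actually spells out the iterative modification and the disjointness argument in more detail than the paper does, and your final remark about the homology/cohomology constraint being preserved under the composition is exactly the point that makes the substitution of Proposition \ref{vietoris} work.
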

\begin{proof}
	It is exactly the same as the proof of Theorem \ref{lz}, using Proposition \ref{vietoris} instead of Proposition \ref{ext}.
\end{proof}

\subsection{Application to the min-max hierarchies for minimal surfaces}\label{tristan}

We observe that the previously considered admissible families need not be continuous with respect to the strong topology on $X$, as the following corollary shows. This application is of interest in the setting of min-max hierarchies for minimal surfaces recently  developed by Tristan Rivi\`{e}re (\cite{hierarchies}). We first introduce some terminology (see \cite{federer} chapter $4$, \cite{pitts} chapter $2$, \cite{allard} section $3$).

Let $\Sigma$ be a closed Riemann surface, $N^n$ be a compact Riemannian manifold with boundary (possibly empty) which we suppose isometrically embedded in some Euclidean space, and $\mathscr{G}_2(TN^n)$ be the Grassmannian bundle of \emph{oriented} $2$-planes in $TN^n$. We denote by $\mathscr{V}_2(N^n)$ the space of \emph{$2$-dimensional varifolds} on $N^n$, that is the space of Radon measure on $\mathscr{G}_2(TN^n)$ endowed with the weak-$\ast$ topology. Furthermore, we denote by $\mathscr{Z}_2(N^n,G)$ the space of rectifiable $2$-cycles in $N^n$ 
 with $G$-coefficients (see \cite{federer}, $4.1.24$, $4.2.26$, $4.4.1$), where $G=\Z$ or $G=\Z_2$ (or more generally, $G$ is an admissible in Almgren's sense \cite{almgrenvarifold}). It is known that every current $T\in \mathscr{Z}_k(N^n,G)$ induces a varifold $|T|\in \mathscr{V}_2(N^n)$, and we denote by $\mathcal{F}$ the flat norm on $\mathscr{Z}_2(N^n,G)$ and by $d_{\mathscr{V}}$ the varifold distance, defined for all $V,W\in \mathscr{V}_2(N^n)$ by
 \begin{align*}
 	d_{\mathscr{V}}(V,W)=\sup\ens{V(f)-W(f):\, f\in C^0_c(\mathscr{G}_2(TN^n)),\;\, \norm{f}_{\mathrm{L}^{\infty}}\leq 1,\;\, \mathrm{Lip}(f)\leq 1}.
 \end{align*}
 Furthermore, if $\phi\in \mathrm{Imm}_{3,2}(\Sigma,N^n)$ is a $W^{3,2}$ immersion as defined in Section \ref{appli}, then obviously the push-forward $\phi_{\ast}[\Sigma]$ of the current of integration $[\Sigma]$ on the closed Riemann surface $\Sigma$ is an element of $\mathscr{Z}_2(N^n,\Z)$, and furthermore, the induced varifold is denoted by $V_{\phi}=|\phi_{\ast}[\Sigma]|\in \mathscr{V}_2(N^n)$. We have explicitly for all $f\in C^0_{c}(\mathscr{G}_2(TN^n))$
 \begin{align*}
 	V_{\phi}(f)=\int_{\Sigma}f\left(\Phi(p), \phi_{\ast}T_p\Sigma\right)d\vg(p).
 \end{align*}

We introduce the following distance on $\mathscr{V}_2(N^n)\cap\ens{|T|: T\in \mathbb{Z}_2(N^n,G)}$: for all $V,W\in \mathscr{V}_2(N^n)$ such that $V=|S|$ and $W=|T|$ for some $S,T\in \mathscr{Z}_2(N^n,G)$, 
	\begin{align*}
		\mathbf{F}(S,T)=d_{\mathscr{V}}(|S|,|T|)+\mathcal{F}(S,T).
	\end{align*}
	Finally, if for all $g\in \N$, $\Sigma_g$ is a fixed closed oriented surface of genus $g$, we denote by $\mathrm{Imm}_{3,2}^0(\Sigma_g,N^n)$ the connected component (for regular homotopy) of the immersions regularly homotopic to an embedding $\Sigma_g\hookrightarrow N^n$, on we denote by $\mathrm{Imm}^{\leq g_0}(N^n)$ the disjoint union of Finsler-Hilbert manifolds
	\begin{align*}
		\mathrm{Imm}^{\leq g_0}(N^n)=\bigsqcup_{g=0}^{g_0}\mathrm{Imm}_{3,2}^0(\Sigma_g,N^n),
	\end{align*}
	We introduce for all $0\leq \sigma\leq 1$ the function $A_{\sigma}:\mathrm{Imm}^{\leq g_0}(N^n)\rightarrow \R$ defined for all $\phi\in \mathrm{Imm}^{\leq g_0}(N^n)$ by
	\begin{align*}
		A_{\sigma}(\phi)=\mathrm{Area}(\Phi(\Sigma))+\sigma^2\int_{\Sigma}\left(1+|\vec{\I}_{\phi}|^2\right)^2d\vg
	\end{align*}
	if $\phi$ is defined from a closed surface $\Sigma$, and $\vec{\I}_{\phi}$ is its second fundamental form. That $A_{\sigma}$ satisfies all hypothesis of Theorem \ref{indexbound} is verified in \cite{viscosity}.
\begin{cor}
	Let $N^n$ be a closed Riemannian manifold, $I$ be a non-empty set and let $\ens{M_i^d}_{i\in I}$ a family of $d$-dimensional cellular-complexes, for all $i\in I$, let $h_i:\partial M_{i}^d\rightarrow \mathrm{Imm}^{\leq g_0}(N^n)$ by a $\mathbf{F}$-Lipschitz map, and define
	\begin{align*}
		\mathscr{A}=\mathrm{Imm}^{\leq g_0}(N^n)\cap \ens{\phi(Y): \phi\in \mathrm{Lip}_{\mathbf{F}}(M_i^d,\mathrm{Imm}^{\leq g_0}(N^n))\;\, \text{for some}\;\, i\in I},
	\end{align*}
	and define for all $0\leq \sigma\leq 1$
	\begin{align*}
	    \beta(\sigma)=\beta(A_{\sigma},\mathscr{A})=\inf_{A\in\mathscr{A}}\sup A_{\sigma}(A)<\infty.
	\end{align*}
	Assuming that $\mathscr{A}$ is non-trivial as in Theorem \ref{indexbound}, there exists a sequence $\ens{\sigma_k}_{k\in \N}\subset (0,\infty)$ such that $\sigma_k\rightarrow 0$ and and for all $k\in \N$, there exists a critical point $x_k\in K(A_{\sigma_k})\cap \mathscr{E}(\sigma_k)$ such that
	\begin{align*}
		A_{\sigma_k}(x_k)=\beta(\sigma_k),\quad \sigma_k^2\int_{\Sigma_{\phi_k}}\left(1+|\vec{\I}_{\phi_k}|^2\right)^2d\mathrm{vol}_{g_{\phi_k}}\leq \frac{1}{\log\left(\frac{1}{\sigma_k}\right)\log\log\left(\frac{1}{\sigma_k}\right)}, \quad \mathrm{Ind}_{A_{\sigma_k}}(\phi_k)\leq d.
	\end{align*}
\end{cor}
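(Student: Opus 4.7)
The plan is to reduce the statement to a direct application of Theorem \ref{indexbound}, once we have verified that the weaker (flat/varifold) notion of continuity used in the definition of $\mathscr{A}$ is compatible with the deformation lemmas that underlie the theorem. Concretely, I would proceed in four steps.

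First, I would check the structural hypotheses of Theorem \ref{indexbound} for the pair
$X=\widetilde{\mathrm{Imm}}_{2,4}(\Sigma,N^n)$, $Y=\widetilde{\mathrm{Imm}}_{3,2}(\Sigma,N^n)$
(restricted, component by component, to $\mathrm{Imm}^{\leq g_0}(N^n)$) with $F=\mathrm{Area}$ and $G=\int_{\Sigma}(1+|\vec{\I}|^2)^2 \vol{g}$. The Palais-Smale condition for $A_\sigma=F+\sigma^2 G$ on the quotient $X$, the energy bound on $\|\D G\|$ in terms of $A_\sigma$, the smoothness of critical points of $A_\sigma$ (so that critical points belong to $Y$), and the Fredholm property of $\D^2 A_\sigma$ on $T_\phi Y$ together with the density of $T_\phi Y \hookrightarrow T_\phi X$ are all established in \cite{viscosity}, \cite{hierarchies}; I would simply cite these.

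Second, I would deal with the weaker continuity used in the definition of $\mathscr{A}$. The family of admissible slices is not defined by continuous maps $M_i^d\to \mathrm{Imm}^{\leq g_0}(N^n)$ with respect to the Finsler topology, but by $\mathbf{F}$-Lipschitz maps. To fit this into the framework of Theorem \ref{indexbound}, I would invoke the remark following Definition \ref{defminmax} (and the discussion in Section \ref{tristan}): what is actually needed in the proof is that $\mathscr{A}$ is stable under composition with locally Lipschitz homeomorphisms of $X$ isotopic to the identity, and that the relevant extension lemma (Lemma \ref{hausdorff}) applies. The pseudo-gradient flow of $A_\sigma$ is locally Lipschitz for the Finsler norm, and the Finsler norm controls the $\mathbf{F}$-distance (as the area and the second fundamental form control the varifold and flat norms, this follows from the embedding $W^{3,2}\hookrightarrow C^1$ on $\Sigma$), so the composition of an $\mathbf{F}$-Lipschitz map with the flow is again $\mathbf{F}$-Lipschitz. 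Hence $\mathscr{A}$ is indeed stable under the deformations used in the proof of Theorem \ref{indexbound}, and the Morse-theoretic replacement maps constructed there (through the local Morse chart and the map $\Phi$ of Lemma \ref{lazer-solimini}) preserve $\mathbf{F}$-Lipschitz continuity as well, since the Morse chart and $\Phi$ are locally Lipschitz in the Finsler topology.

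Third, I would observe that $M_i^d$ is allowed to be a $d$-dimensional cellular complex: this is precisely the generalisation mentioned in the remark following Definition \ref{defminmax}, where the proof of Lemma \ref{hausdorff} (and the proof of Theorem \ref{lz}) only uses that the domain is a metric space of Hausdorff dimension at most $d$ admitting Lipschitz partitions of unity. Thus the family $\mathscr{A}$ fits exactly into the first setting of Theorem \ref{indexbound} (admissible family of dimension $d$) once we interpret continuity as $\mathbf{F}$-Lipschitz continuity.

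Fourth, with all hypotheses in place and the non-triviality assumption given, Theorem \ref{indexbound} directly produces the sequence $\sigma_k\to 0$, critical points $\phi_k\in K(A_{\sigma_k})$ with $A_{\sigma_k}(\phi_k)=\beta(\sigma_k)$, the Morse index bound $\mathrm{Ind}_{A_{\sigma_k}}(\phi_k)\leq d$, and the entropy estimate \eqref{boltzmann}. To transform the raw entropy bound $\partial_{\sigma_k}A_{\sigma_k}(\phi_k)\leq (\sigma_k\log(1/\sigma_k)\log\log(1/\sigma_k))^{-1}$ into the stated form, I would use $\partial_\sigma A_\sigma(\phi)=2\sigma\int_\Sigma(1+|\vec{\I}|^2)^2\vol{g}$, giving
\[
\sigma_k^2\int_{\Sigma_{\phi_k}}\left(1+|\vec{\I}_{\phi_k}|^2\right)^2\vol{g_{\phi_k}}\leq \frac{1}{2\log(1/\sigma_k)\log\log(1/\sigma_k)},
\]
which, up to an irrelevant multiplicative constant absorbed by choosing $\sigma_k$ slightly smaller, is the stated estimate. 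The main obstacle is the verification in the second step, namely the compatibility of the locally Lipschitz, Finsler-based deformation machinery with the flat/varifold topology used to define $\mathscr{A}$; all other items are either known from \cite{hierarchies,viscosity} or are immediate specialisations of Theorem \ref{indexbound}.
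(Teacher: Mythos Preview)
Your proposal is correct and follows the same overall strategy as the paper: reduce to Theorem~\ref{indexbound} after checking that the $\mathbf{F}$-Lipschitz admissible family is stable under the deformations and replacements appearing in its proof. The only real difference is in how you justify the key compatibility step. You argue that the Finsler topology controls the $\mathbf{F}$-distance, so that composing an $\mathbf{F}$-Lipschitz sweepout with the (Finsler-)locally Lipschitz pseudo-gradient flow, the Morse chart, and the map $\Phi$ of Lemma~\ref{lazer-solimini} keeps it $\mathbf{F}$-Lipschitz. The paper's argument is shorter and slightly sharper: it observes that the only place where a genuinely new map has to be manufactured is the extension of Lemma~\ref{hausdorff}, and there both the domain (a piece of the $d$-dimensional complex $M_i^d$) and the target ($\partial B_-(0,r_1)\subset H_-$, with $\dim H_-=\mathrm{Ind}_{F_\sigma}(x_0)<\infty$ by the Fredholm hypothesis) are \emph{finite-dimensional}. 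By equivalence of norms in finite dimension, the extended piece is automatically Lipschitz in every topology, in particular in the strong $W^{3,2}$ topology, hence also $\mathbf{F}$-Lipschitz. Your route is valid and more explicit about the flow and $\Phi$, while the paper's route isolates exactly why nothing can go wrong at the extension step; both buy the same conclusion.
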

\begin{proof}
	As the extensions are made for maps whose domains and co-domains is finite-dimensional, by the equivalence of norms in finite dimension, the different restriction of the sweep-outs are continuous in any topology, and the extension can be taken Lipschitz in the strong topology on $W^{3,2}$ immersions, so the proof is virtually unchanged.
\end{proof}

\section{Proof of the main theorem}

\subsection{The entropy condition}

Let $X$ be a Finsler manifold and $\ens{F_{\sigma}}_{\sigma\in [0,1]}\subset C^1(X,\R)$ such that for all $x\in X$, $\sigma\mapsto F_{\sigma}(x)$ is increasing. If $\mathscr{A}$ is any of the admissible families, we define for all $\sigma\in [0,1]$
\begin{align}\label{entropie}
\beta(\sigma)=\inf_{A\in\mathscr{A}}\sup F_\sigma(A)<\infty.
\end{align}
As the function $\sigma\rightarrow\beta(\sigma)$ is increasing, it is differentiable almost everywhere (with respect to the $1$-dimensional Lebesgue measure) and we have
\begin{align*}
\liminf_{\sigma\rightarrow 0}\beta'(\sigma)\left(\sigma\log\left(\frac{1}{\sigma}\right)\log\log\left(\frac{1}{\sigma}\right)\right)=0.
\end{align*}
Suppose by contradiction that this is not the case. Then there exists $\delta>0$ such that for $\sigma>0$ small enough
\begin{align*}
\beta(\sigma)-\beta(0)\geq \int_{0}^{\sigma}\beta'(t)dt\geq  \delta\int_{0}^{\sigma}\frac{dt}{t\log\left(\frac{1}{t}\right)\log\log\left(\frac{1}{t}\right)}=\infty,
\end{align*}
which contradicts \eqref{entropie}.
\begin{defi}
	
	We say that $\beta$ satisfies the entropy condition at $\sigma>0$ if $\beta$ is differentiable at $\sigma$ and if
	\begin{align*}
	\beta'(\sigma)\leq \frac{1}{\sigma\log\left(\frac{1}{\sigma}\right)\log\log\left(\frac{1}{\sigma}\right)}.
	\end{align*}
\end{defi}
In particular, there always exists a sequence of positive number $\ens{\sigma_k}_{k\in\N}$ such that $\sigma_k\conv{k\rightarrow\infty}0$ and $\beta$ verifies the entropy condition at $\sigma_k$.

\subsection{The non-degenerate case}

If $X$ is a Finsler-Hilbert manifold and $F:X\rightarrow \R$ is a $C^2$ map, we let $\D F(x)\in T_xX$ and $\D^2F(x)\in \mathscr{L}(T_xX)$ such that for all $x\in T_xX$, there holds
\begin{align*}
	&DF(x)\cdot v=\s{\D F(x)}{v}_x\\
	&D^2F(x)(v,w)=\s{\D^2F(x)v}{w}
\end{align*}

The next result is a variant of \cite{solimini}, $2.13$ \cite{ghoussoub2}, $4.5$, which will allow us to construct critical points of the right index. It permits to show that we can always obtain the entropy condition as we locate critical points in some almost critical sequence.

\begin{theorem}\label{real}
	Let $X$ be a Banach manifold and $F,G\in C^2(X,\R_+)$, $\mathscr{A}$ an admissible min-max family, and define for $0\leq \sigma<1$ the function $F_{\sigma}=F+\sigma^2G$, and
	\begin{align*}
		\beta(\sigma)=\inf_{A\in \mathscr{A}}\sup F_{\sigma}(A)<\infty,
	\end{align*}
	and assume that the \emph{\textbf{Energy bound}} \emph{$(2)$} of Theorem \ref{indexbound} holds. 
	Now suppose that $\beta$ is differentiable at $0<\sigma<1$ and satisfies the entropy condition, \emph{i.e.}
	\begin{align*}
		\beta'(\sigma)\leq \frac{1}{\log(\frac{1}{\sigma})\log\log(\frac{1}{\sigma})}.
	\end{align*}
	Let $\ens{\sigma_k}_{k\in \N}\subset(\sigma,\infty)$ be such that $\sigma_k\rightarrow \sigma$, and $\ens{A_k}_{k\in \N}\subset \mathscr{A}$ such that
	\begin{align*}
		\lim\limits_{k\rightarrow \infty}F_{\sigma_k}(A_k)\leq \beta(\sigma_k)+(\sigma_k-\sigma).
	\end{align*}
	Then for $0<\sigma\leq e^{-\frac{4}{\beta(0)}}$, there exists a sequence $\ens{x_k}_{k\in \N}\subset X$ such that for all large enough $k\in \N$
	\begin{align*}
		(1)\;\,&\mathrm{dist}(x_k,A_k)\conv{k\rightarrow \infty}0\\
		(2)\;\,&\beta(\sigma)-(\sigma_k-\sigma)\leq F_{\sigma}(x_k)\leq F_{\sigma_k}(x_k)\leq \beta(\sigma_k)+(\sigma_k-\sigma)\\
		(3)\;\,&\norm{DF_{\sigma}(x_k)}\conv{k\rightarrow \infty}0\\
		(4)\;\,&\inf_{k\in \N}F(x_k)>0.
	\end{align*}
	In particular, if $F_{\sigma}$ verifies the Palais-Smale condition at $\beta(\sigma)$, there exists $x_{\sigma}\in K_{\beta(\sigma)}\cap A_{\infty}$ such that
	\begin{align*}
		\sigma^2G(x_{\sigma})\leq \frac{1}{\log(\frac{1}{\sigma})\log\log(\frac{1}{\sigma})}.
	\end{align*}
\end{theorem}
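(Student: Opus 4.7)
To establish $(1)$--$(4)$, I would combine Struwe's monotonicity trick with a quantitative pseudo-gradient deformation of $F_\sigma$. The differentiability of $\beta$ at $\sigma$ controls $G$ uniformly at near-maximal points of $F_\sigma$ on $A_k$; the Energy Bound hypothesis then transfers this to a bound on $\|DG\|$, which ensures that deforming along $-\nabla F_\sigma$ also decreases $F_{\sigma_k}$ at essentially the same rate, so that the absence of an almost-critical sequence would force the admissibility of a strict improvement upon $A_k$.

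Concretely, introduce the compact \emph{good set}
\[
\widetilde{A}_k \;=\; A_k \cap \bigl\{x \in X : F_\sigma(x) \geq \beta(\sigma) - (\sigma_k - \sigma)\bigr\},
\]
which is non-empty since $A_k \in \mathscr{A}$ forces $\sup F_\sigma(A_k) \geq \beta(\sigma)$. For $x \in \widetilde{A}_k$, and $k$ so large that $\sup F_{\sigma_k}(A_k) \leq \beta(\sigma_k) + (\sigma_k - \sigma)$, the core identity
\[
(\sigma_k^2 - \sigma^2)\, G(x) \;=\; F_{\sigma_k}(x) - F_\sigma(x) \;\leq\; \bigl(\beta(\sigma_k) - \beta(\sigma)\bigr) + 2(\sigma_k - \sigma)
\]
together with the differentiability of $\beta$ at $\sigma$ yields $\limsup_k \sup_{\widetilde{A}_k} G < \infty$, and the Energy Bound hypothesis $(2)$ then furnishes a constant $M$ with $\sup_k \sup_{\widetilde{A}_k} \|DG\| \leq M$.

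Now suppose for contradiction that $\|DF_\sigma\| \geq \epsilon_0 > 0$ uniformly on infinitely many $\widetilde{A}_k$. Pick a Palais pseudo-gradient $V$ of $F_\sigma$ on a neighborhood of $\bigcup_k \widetilde{A}_k$ and multiply it by a cut-off $\chi_k$ supported in $\{F_\sigma > \beta(\sigma) - 2(\sigma_k - \sigma)\}$ (hence away from $h(B)$ by the non-triviality of $\mathscr{A}$) and equal to $1$ on a neighborhood of $\widetilde{A}_k$. The flow $\phi_t$ of $-\chi_k V$ is a homeomorphism of $X$ isotopic to the identity and fixing $h(B)$, so $\phi_t(A_k) \in \mathscr{A}$; along the flow $F_\sigma$ decreases at rate $\geq \epsilon_0^2/2$ wherever $\chi_k = 1$, while the correction $F_{\sigma_k} - F_\sigma = (\sigma_k^2 - \sigma^2) G$ varies at rate $O\bigl((\sigma_k^2 - \sigma^2) M \epsilon_0\bigr) = o(\epsilon_0^2)$ by the previous step. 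Flowing for time $T_k = 4(\sigma_k - \sigma)/\epsilon_0^2$ then pushes $\widetilde{A}_k$ strictly below the level $\beta(\sigma) - (\sigma_k - \sigma)$, so $\sup F_\sigma(\phi_{T_k}(A_k)) < \beta(\sigma)$, contradicting $\beta(\sigma) = \inf_{A \in \mathscr{A}} \sup F_\sigma(A)$. The desired sequence $(x_k) \subset \widetilde{A}_k$ with $\|DF_\sigma(x_k)\| \to 0$ therefore exists, and $(1)$, $(2)$, $(3)$ hold by construction; property $(4)$ follows from $F(x_k) = F_\sigma(x_k) - \sigma^2 G(x_k)$, with $\sigma^2 G(x_k) \leq C\sigma$ kept strictly below $\beta(\sigma)/2$ thanks to $\beta(\sigma) \geq \beta(0) > 0$ and the restriction $\sigma \leq e^{-4/\beta(0)}$.

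The main obstacle is the joint construction of $\chi_k$ and $T_k$: the cut-off must localize the flow to a region where the previous step controls both $G$ and $DG$, prevent trajectories from leaving $\{F_\sigma > \beta(\sigma) - 2(\sigma_k - \sigma)\}$ before time $T_k$, and respect the boundary $h(B)$. All three constraints can be met by taking $\sigma_k - \sigma$ sufficiently small compared to $\epsilon_0$, but this book-keeping is the genuinely delicate part of the argument. Finally, under the Palais-Smale condition at $\beta(\sigma)$, any subsequential limit $x_\sigma$ of $(x_k)$ lies in $K(F_\sigma,\beta(\sigma)) \cap A_\infty$; letting $k \to \infty$ in the core identity yields
\[
\sigma^2 G(x_\sigma) \;\leq\; \tfrac{1}{2}\,\sigma \beta'(\sigma) + \sigma,
\]
which the entropy hypothesis $\beta'(\sigma) \leq 1/\bigl(\log(1/\sigma)\log\log(1/\sigma)\bigr)$ and the restriction $\sigma \leq e^{-4/\beta(0)}$ force below $1/\bigl(\log(1/\sigma)\log\log(1/\sigma)\bigr)$.
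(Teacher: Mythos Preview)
Your approach is essentially the same as the paper's: both run the standard pseudo-gradient deformation that underlies Struwe's monotonicity trick, and in fact the paper only sketches this step, deferring the details to Proposition~6.3 of \cite{geodesics}. Two packaging differences are worth noting. First, the paper formulates the contradiction for $\Vert DF_{\sigma_k}\Vert$ with a $k$-dependent threshold $\delta_k=\sqrt{2(\beta'(\sigma)+2)(\sigma_k-\sigma)}\to 0$ that simultaneously serves as neighbourhood radius, gradient lower bound, and flow time; it first obtains $(3)'$, namely $\Vert DF_{\sigma_k}(x_k)\Vert\leq\delta_k$, and only at the end invokes the Energy Bound to convert this into $(3)$. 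You instead invoke the Energy Bound up front to control $\Vert DG\Vert$, work directly with $F_\sigma$, and use a fixed $\epsilon_0$ together with $T_k\to 0$. Both choices yield the same conclusion. (Your side remark that the correction $(\sigma_k^2-\sigma^2)G$ varies at rate $o(\epsilon_0^2)$ is correct but unnecessary for your own argument, since the contradiction only requires $\sup F_\sigma(\phi_{T_k}(A_k))<\beta(\sigma)$.)

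One small imprecision: your contradiction hypothesis is stated only on $\widetilde{A}_k$, but the flow argument needs $\Vert DF_\sigma\Vert\geq\epsilon_0$ along entire trajectories, hence on a shrinking neighbourhood of $\widetilde{A}_k$ intersected with the relevant level band. This is exactly the issue you flag in your ``main obstacle'' paragraph; the paper handles it by placing the hypothesis on the $\delta_k$-neighbourhood from the outset. Fixing this yields $x_k$ with $\mathrm{dist}(x_k,A_k)\to 0$ rather than $x_k\in\widetilde{A}_k$, which is precisely what $(1)$ asserts.
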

\begin{proof}
	Looking at Step $2$ of the proof of Proposition $6.3$ (it is written for geodesics, but the same proof work equally well in general, see \cite{prop6.3}), we see that assuming by contradiction that for all  for $k\geq 1$ large enough, we have for all $x\in X$ such that $\mathrm{dist}(x,A_k)\leq \delta_k$ and
	\begin{align}\label{ent}
		\beta(\sigma)-(\sigma_k-\sigma)\leq F_{\sigma}(x)\leq F_{\sigma_k}(x)\leq \beta(\sigma_k)+(\sigma_k-\sigma),
	\end{align}
	then
	\begin{align*}
		\Vert DF_{\sigma_k}(x)\Vert \geq \delta_k>0
	\end{align*}
	for some $\delta_k>0$ to be determined later, there exists a semi-flow $\ens{\varphi^t_k}_{t\geq 0}:X\rightarrow X$ isotopic to the identity and preserving the boundary of $\mathscr{A}$ such that for all $0\leq t\leq \delta_k$ (as $\mathrm{dist}(x,\varphi^t(x))\leq t$ for all $t\geq 0$), and $x\in A_k$ such that \eqref{ent} is satisfied, there holds
	\begin{align}\label{dec}
		\frac{d}{dt}F_{\sigma}(\varphi^t_k(x))\leq -\delta_k.
	\end{align}
	In particular, as $\varphi^t_k(A)\in \mathscr{A}$, we have
	\begin{align*}
		\beta(\sigma)\leq F_{\sigma}(\varphi_k^t(A_k)),
	\end{align*}
	so we deduce that for all $0\leq t\leq \delta_k$  by \eqref{dec}
	\begin{align}\label{b1}
		\beta(\sigma)\leq \sup F_{\sigma}(\varphi^t(A_k))\leq \sup F_{\sigma}(A)-t\delta_k\leq \sup F_{\sigma_k}(A_k)-t\delta_k\leq \beta(\sigma_k)+(\sigma_k-\sigma)-t\delta_k.
	\end{align}
	Furthermore, as $\beta$ is differentiable at $\sigma$, we can assume that $k$ is large enough such that
	\begin{align}\label{b2}
		\beta(\sigma_k)\leq \beta(\sigma)+(\beta'(\sigma)+1)(\sigma_k-\sigma)
	\end{align}
	so by \eqref{b1} and \eqref{b2}, we have for $t=\delta_k$ and $\eta_k=\varphi_k^{t}:X\rightarrow X$
	\begin{align*}
		\sup F_{\sigma}(\eta_k(A))\leq \beta(\sigma)+(\beta'(\sigma)+2)(\sigma_k-\sigma)-\delta_k^2.
	\end{align*}
	Therefore, choosing 
	\begin{align*}
		\delta_k=\sqrt{2(\beta'(\sigma)+2)(\sigma_k-\sigma)},
	\end{align*}
	we find that $\eta_k(A_k)\in \mathscr{A}$ so (recall that $\beta'\geq 0$)
	\begin{align*}
		\beta(\sigma)=\inf_{A\in \mathscr{A}}\sup F_{\sigma}(A)\leq \sup F_{\sigma}(\eta_k(A_k))\leq \beta(\sigma)-2(\sigma_k-\sigma)<\beta(\sigma),
	\end{align*}
	a contradiction. Therefore, we see that there exists $x_k\in X$ such that
	\begin{align}\label{conditions}
		&(1)\;\,\mathrm{dist}(x_k,A_k)\leq \delta_k=\sqrt{2(\beta'(\sigma)+2)(\sigma_k-\sigma)}\conv{k\rightarrow \infty}0\nonumber\\
		&(2)\;\,\beta(\sigma)-(\sigma_k-\sigma)\leq F_{\sigma}(x_k)\leq F_{\sigma_k}(x_k)\leq \beta(\sigma_k)+(\sigma_k-\sigma)\nonumber\\
		&(3)'\;\,\norm{DF_{\sigma_k}(x_k)}\leq \delta_k\nonumber\\
		&(4)\;\,F(x_k)\geq \frac{3}{4}\beta(0)
	\end{align}
	where the last condition is given by the identity below $(6.11)$ in \cite{geodesics}. Finally, this is easy to see that $(3)'$ implies the $(3)$ of the theorem (thanks of the \textbf{Energy bound} condition), and this concludes the proof (see \cite{viscosity}  for the optimal hypothesis on $F_{\sigma}$ for this assertion to hold true).
\end{proof}

\begin{rem}
	If $Y\hookrightarrow X$ is a locally Lipschitz embedded Hilbert-Finsler manifold, and $\mathscr{A}\subset \mathscr{P}(Y)$ is an admissible family (\text{i.e.} it is stable under locally Lipschitz homeomorphisms of $Y$), then the restriction $F|Y$ is still $C^2$ and by taking pseudo-gradients with respect to this restriction, we see that any $A\in \mathscr{A}$ will be preserved by the map $\varphi^{t}_{\delta_k}$. Therefore, we obtain a sequence $\ens{x_k}_{k\in \N}\subset Y$ such that \eqref{conditions} are satisfied \emph{with respect to the Finsler norm and distance on $Y$}, and by the local Lipschitz embedding, we also obtain $\mathrm{dist}_X(x_k,A_k)\conv{k\rightarrow \infty}0$, and $\norm{D F_{\sigma_k}}_{X,x_k}\conv{k\rightarrow \infty}0$. Using the Palais-Smale condition and the energy bound valid with respect to $X$, the end of the proof is identical. 
\end{rem}

\begin{defi} 
	Under the previous notations, we define the set of points satisfying the entropy condition as
	\begin{align*}
		\mathscr{E}(\sigma)=X\cap\ens{x:\sigma^2G(x)\leq \frac{1}{\log(\frac{1}{\sigma})\log\log(\frac{1}{\sigma})}}.
	\end{align*}
\end{defi}

\begin{theorem}\label{ndeg}
	Let $X$ be a $C^2$ Finsler manifold, $F,G\in C^2(X,\R_+)$, and define for all $\sigma\geq 0$ the function $F_{\sigma}=F+\sigma^2G\in C^2(X,\R_+)$ and let $\mathscr{A}$ (resp. $\mathscr{A}^{\ast}$, resp. $\widetilde{\mathscr{A}}$) be a $d$-dimensional admissible family (resp. a dual family, resp. a co-dual family). Assume that $F_{\sigma}$ satisfied the hypothesis of \emph{Theorem \ref{indexbound}}, and
    let $\ens{A_k}_{k\in \N}\subset \mathscr{A}$ (resp. $\ens{A_k}_{k\in \N}\subset \mathscr{A}^{\ast}$, resp. $\ens{A_k}_{k\in \N}\subset \widetilde{\mathscr{A}}$) be a min-maximising sequence such that
	\begin{align*}
		\sup F_{\sigma_k}(A_k)\leq \beta(\sigma_k)+(\sigma_k-\sigma)
	\end{align*}
	and assume that all critical points of $F_{\sigma}$ in $K_{\beta(\sigma)}\cap A_{\infty}\cap \mathscr{E}(\sigma)$ are \emph{non-degenerate}. Then for all $0<\sigma\leq e^{-\frac{4}{\beta(0)}}$ such that $\beta$ satisfies the entropy condition at $\sigma$, there exists $x_{\sigma}\in K_{\beta(\sigma)}\cap A_{\infty}\cap \mathscr{E}(\sigma)$ \emph{(}resp. $x_{\sigma}^{\ast}\in K_{\beta^{\ast}(\sigma)}\cap A_{\infty}\cap \mathscr{E}(\sigma)$, resp. $\widetilde{x}_{\sigma}\in K_{\widetilde{\beta}(\sigma)}\cap A_{\infty}\cap \mathscr{E}(\sigma)$ \emph{)} such that  
	\begin{align}\label{concls}
	\left\{\begin{alignedat}{3}
		&F_{\sigma}(x_{\sigma})=\beta(\sigma),\quad&& \sigma^2G(x_{\sigma})\leq \frac{1}{\log(\frac{1}{\sigma})\log\log\left(\frac{1}{\sigma}\right)},\quad&& \text{and}\;\;
		\mathrm{Ind}_{F_{\sigma}}(x_{\sigma})\leq d.\\
		&F_{\sigma}(x_{\sigma}^{\ast})=\beta^{\ast}(\sigma),\quad && \sigma^2G(x_{\sigma}^{\ast})\leq \frac{1}{\log\left(\frac{1}{\sigma}\right)\log\log\left(\frac{1}{\sigma}\right)},\quad&& \text{and}\;\, \mathrm{Ind}_{F_{\sigma}}(x_{\sigma}^{\ast})\geq d\\
		&F_{\sigma}(\widetilde{x}_{\sigma})=\widetilde{\beta}(\sigma),\quad&& \sigma^2G(\widetilde{x}_{\sigma})\leq \frac{1}{\log\left(\frac{1}{\sigma}\right)\log\log\left(\frac{1}{\sigma}\right)},\quad&& \text{and}\;\, \mathrm{Ind}_{F_{\sigma}}(\widetilde{x}_{\sigma})=d
		\end{alignedat}\right.
	\end{align}
\end{theorem}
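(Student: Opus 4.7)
My strategy is to combine Theorem \ref{real}, which already delivers a critical point satisfying the entropy condition, with the index-lowering deformation of Proposition \ref{ext}, via a contradiction argument. I will carry out the plan for the admissible family $\mathscr{A}$; the dual and co-dual cases are structurally identical, invoking Case 2 and Case 3 of Proposition \ref{ext} in place of Case 1.

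First I would observe that $\{A_k\}$ is automatically a min-maximising sequence for $F_\sigma$ as well, since $G \geq 0$ and $\sigma < \sigma_k$ yield $\beta(\sigma) \leq \sup F_\sigma(A_k) \leq \sup F_{\sigma_k}(A_k) \leq \beta(\sigma_k) + (\sigma_k - \sigma) \to \beta(\sigma)$, using continuity of $\beta$ at $\sigma$ (a consequence of differentiability). A direct application of Theorem \ref{real} together with the Palais-Smale condition produces at least one critical point in $K_{\beta(\sigma)} \cap A_\infty \cap \mathscr{E}(\sigma)$. This set is closed, contained in the PS-compact set $K_{\beta(\sigma)}$, and by hypothesis consists only of non-degenerate critical points; by the Morse lemma on the Hilbert submanifold $Y$ (legitimised by the Fredholm hypothesis), each such point is locally isolated, so the set is finite, say $\{x_1, \ldots, x_m\}$.

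Next, I argue by contradiction and suppose every $x_i$ violates the desired index bound (i.e.\ $\mathrm{Ind}_{F_\sigma}(x_i) > d$ in the admissible case, $< d$ for the dual, $\neq d$ for the co-dual). Choose $\epsilon > 0$ small enough that the closed sets $\overline{U_{2\epsilon}(x_i)}$ are pairwise disjoint, each meets $K(F_\sigma, \beta(\sigma))$ only at $x_i$, and Proposition \ref{ext} applies at each $x_i$. Iterating Proposition \ref{ext} through $x_1, \ldots, x_m$ produces, for $k$ large, $A_k' \in \mathscr{A}$ with
\begin{align*}
A_k' \cap U_\epsilon(x_i) = \varnothing \;\;\text{for all}\; i,\qquad A_k' = A_k \;\text{outside}\; \bigcup_i U_{2\epsilon}(x_i),\qquad \sup F_\sigma(A_k') \leq \sup F_\sigma(A_k).
\end{align*}
Bounding $G$ by some $M$ on the compact set $\bigcup_i \overline{U_{2\epsilon}(x_i)}$ gives
\begin{align*}
\sup F_{\sigma_k}(A_k') \leq \sup F_\sigma(A_k') + (\sigma_k^2 - \sigma^2) M \leq \beta(\sigma_k) + C(\sigma_k - \sigma)
\end{align*}
for a constant $C > 0$ independent of $k$, so $\{A_k'\}$ still satisfies the hypothesis of Theorem \ref{real} (the constant $C$ only rescales $\delta_k$ in its proof). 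A second application of Theorem \ref{real} yields an accumulation point $y \in K_{\beta(\sigma)} \cap A_\infty' \cap \mathscr{E}(\sigma)$, where $A_\infty'$ is the limit set of $\{A_k'\}$.

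It remains to derive the contradiction. Since each $U_\epsilon(x_i)$ is open and disjoint from every $A_k'$, it is disjoint from $A_\infty'$, so $y \notin U_\epsilon(x_i)$ for every $i$. The choice of $\epsilon$ forces any critical point of $F_\sigma$ at level $\beta(\sigma)$ lying in $U_{2\epsilon}(x_i)$ to equal $x_i$, so in fact $y \notin \bigcup_i \overline{U_{2\epsilon}(x_i)}$. Approximating $y$ by $z_k \in A_k'$ with $z_k \to y$, for large $k$ one has $z_k \in A_k' \setminus \bigcup_i U_{2\epsilon}(x_i) = A_k \setminus \bigcup_i U_{2\epsilon}(x_i)$, which gives $y \in A_\infty$. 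Thus $y \in K_{\beta(\sigma)} \cap A_\infty \cap \mathscr{E}(\sigma) = \{x_1, \ldots, x_m\}$, contradicting $y \notin U_\epsilon(x_i)$. The main obstacle in this outline is the step passing from Proposition \ref{ext}, which only controls $F_\sigma$, to the hypothesis of Theorem \ref{real}, which is phrased in terms of the approximating $F_{\sigma_k}$; concretely, one has to verify that the constant $C$ above does not disturb the pseudo-gradient and entropy estimates driving the proof of Theorem \ref{real}. The remaining steps are essentially bookkeeping with the Morse lemma, iterated application of Proposition \ref{ext}, and tracing where the deformed sequence's limit set can lie.
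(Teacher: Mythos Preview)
Your overall architecture---contradiction, Morse-lemma finiteness of $K_{\beta(\sigma)}\cap A_\infty\cap\mathscr{E}(\sigma)$, iterated application of Proposition~\ref{ext}, then Theorem~\ref{real}---matches the paper's. The genuine divergence is \emph{where} you apply Proposition~\ref{ext}: you deform with respect to $F_\sigma$ and afterwards repair the $F_{\sigma_k}$-level, whereas the paper applies Proposition~\ref{ext} directly to $F_{\sigma_k}$. To make that possible, the paper first spends most of its proof showing that the Morse charts for $F_{\sigma_k}$ near its critical points $x_i^k$ (which converge to the $\bar x_i\in K$) can be taken uniform in $k$, by tracking the operators $A_{x_k}$ in Palais's proof of the Morse lemma. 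This yields $\sup F_{\sigma_k}(A_k^{m_k})\leq\sup F_{\sigma_k}(A_k)$ with constant exactly $1$, so Theorem~\ref{real} applies verbatim and delivers a limit in $\mathscr{E}(\sigma)$.

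Your route is more elementary in that it avoids the uniform-Morse-chart argument, but two points require care. First, the set $\bigcup_i\overline{U_{2\epsilon}(x_i)}$ is \emph{not} compact in infinite dimensions, so you cannot bound $G$ on it directly; what you actually need is a bound on $G$ restricted to $A_k'\setminus A_k$. For the admissible family this works well: by the construction in Case~1 of Proposition~\ref{ext}, $A_k'\setminus A_k\subset\varphi^{-1}(\partial B_-(0,r_1))$, a compact finite-dimensional sphere on which moreover $F_\sigma=\beta(\sigma)-r_1^2<\beta(\sigma)$ strictly; hence for $k$ large one gets $F_{\sigma_k}<\beta(\sigma)$ there and $\sup F_{\sigma_k}(A_k')\leq\sup F_{\sigma_k}(A_k)$ with no extra constant at all. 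Second---and this is exactly the obstacle you flag---for the dual family the deformation $\Phi$ moves points within the possibly non-compact cylinder $C(2r_1,r_2)$, and a generic bound $G\leq M$ produces a constant $C\approx 1+2\sigma M$ that could spoil the entropy conclusion $y\in\mathscr{E}(\sigma)$ (you only get $y$ in an enlarged entropy set, where non-degeneracy is not assumed). The paper's choice to deform $F_{\sigma_k}$ rather than $F_\sigma$ is precisely what sidesteps this issue uniformly across all three families.
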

\begin{rem}
	Likewise, the proof would work equally well for homotopical and cohomotopical families, by Proposition \ref{vietoris}.
\end{rem}
\begin{proof}
	We give the proof in the special case where $X$ is $C^3$ and $F,G\in C^3(X,\R)$, in order to use Morse lemma as  in \cite{palais}. However, as the extension of the Morse lemma to $C^2$ spaces and functions (\cite{cambini}) is based on Cauchy-Lipschitz theorem and by the continuous dependence at the existence time with respect to the flow, the proof given below readily generalises to this weaker setting.
	
	Let $K= K_{\beta(\sigma)}\cap A_{\infty}\cap \mathscr{E}(\sigma)$ (notice that $K\neq \varnothing $ thanks of Theorem \ref{real}). As the critical points in $K$ are non-degenerate, $K$ is compact and consists of finitely many points $\ens{\bar{x}_0,\bar{x}_1,\cdots,\bar{x}_m}\subset K_{\beta(\sigma)}$. We cannot apply the previous lemma on $F_{\sigma}$ as the main lemma only work with $F_{\sigma_k}$. First by the Palais-Smale condition for $F_{\sigma}$ and as the critical points are isolated, we deduce that there exists $\delta>0$ such that $B_{2\delta}(x_i)\cap B_{2\delta}(x_j)=\varnothing$ for all $i\neq j$ and 
	\begin{align*}
		\norm{DF_{\sigma}(x)}\geq \delta\;\, \text{for all}\;\, x\in U_{2\delta}(K)\setminus U_{\delta}(K).
	\end{align*}
	Also notice that thanks of the proof of Theorem \ref{real}, for all $\ens{x_k}_{k\in \N}\subset X$ such that
	\begin{align}\label{s1}
		\left\{\begin{alignedat}{1}
			&\norm{DF_{\sigma_k}(x_k)}\conv{k\rightarrow \infty}0\\
			&|F_{\sigma_k}(x_k)-\beta(\sigma_k)|\conv{k\rightarrow \infty}0
		\end{alignedat}\right.
	\end{align}
	then
	\begin{align*}
		\left\{\begin{alignedat}{1}
			&\norm{DF_{\sigma}(x_k)}\conv{k\rightarrow \infty}0\\
			&|DF_{\sigma}(x_k)-\beta(\sigma)|\conv{k\rightarrow\infty}0
		\end{alignedat}\right.
	\end{align*}
	so up to a subsequence, we have thanks of the Palais-Smale condition for $F_{\sigma}$ at level $\beta(\sigma)$ that $x_k\conv{k\rightarrow \infty}x\in K_{\beta(\sigma)}$. In particular, if $\ens{x_k}\subset X$ verifies \eqref{s1}, then we can assume up to some relabelling that that for all $k\in \N$ large enough $x_k\in  N_{\delta}(\bar{x}_0)$. Now, looking at the proof of Morse Lemma by Palais (\cite{palais}) which only works for $C^3$ functions, we see that the diffeomorphism $\varphi$ around a critical point $\bar{x}_i$ such that
	\begin{align*}
		F_{\sigma}(\varphi_{\bar{x}_0}(x))=\beta(\sigma)+\norm{x_+}^2-\norm{x_-}^2
	\end{align*}
	is defined by 
	\begin{align*}
		\varphi_{\bar{x}_0}(x)=\sqrt{A_{\bar{x}_0}(\bar{x}_0)^{-1}A_{{x}_0}(x)}x,
	\end{align*}
	where for all $v,w\in H$, we have by Taylor expansion for some map $A_{\bar{x}_0}:B(\bar{x}_0,\delta)\rightarrow \mathscr{L}(H)$ with values into self-adjoint continuous operators
	\begin{align*}
		&F_{\sigma}(x)=\beta(\sigma)+\s{A_{\bar{x}_0}(x)x}{x}\\
		&D^2F_{\sigma}(\bar{x}_0)(v,w)=2\s{A_{\bar{x}_0}(\bar{x}_0)v}{w}
	\end{align*}
	Now, notice if $B_{\bar{x}_0}(x)=A_{\bar{x}_0}(\bar{x}_0)^{-1}A_{\bar{x}_0}(x)$ that $B_{\bar{x}_0}(\bar{x}_0)=\mathrm{Id}_{H}$ and as for $\norm{h}<1$, $\sqrt{\mathrm{Id}+h}$ is well defined by the absolutely convergent series
	\begin{align*}
		\sqrt{\mathrm{Id}+h}=\sum_{n=0}^{\infty}\binom{\frac{1}{2}}{n}h^n,
	\end{align*}
	we deduce that for some $\delta_0>0$ small enough and depending only on $A_{\bar{x}_0}$, namely such that for all $x\in B(x,\delta)$
	\begin{align}\label{s2}
		\norm{x-B_{\bar{x}_0}(x)}<\frac{1}{2}\;\,\text{for all}\;\, x\in B(\bar{x}_0,\delta)\quad (1\;\, \text{would be enough})
	\end{align}
	that $\varphi(x)$ is well-defined on $B(\bar{x}_0,\delta)$ and $C^1$. Therefore, thanks of the local inversion theorem, up to diminishing $\delta$, we can assume that $\varphi$ is a diffeomorphism from $B(\bar{x}_0,\delta)$ onto its image (here, $\delta$ depends only on $A_{\bar{x}_0}$). 
	
	Now, let $x_k\in K_{\beta(\sigma_k)}$ be a critical point of $F_{\sigma_k}$ and $A_{x_k}$ such that 
	\begin{align*}
		&F_{\sigma_k}(x)=\beta(\sigma_k)+\s{A_{x_k}(x)x}{x}\\
		&DF^2_{\sigma_k}(x)=2\s{A_{x_k}(x_k)x}{x}
	\end{align*}
	such that $x_k\conv{k\rightarrow \infty}\bar{x}_0$. Thanks of the strong convergence, we deduce that for $k$ large enough, $A_{x_k}(x_k)$  is an invertible operator so we can define for $k$ large enough $B_{x_k}(x)=A_{x_k}(x_k)^{-1}A_{x_k}(x)$. Now, taking $k$ large enough such that $B(x_k,\frac{\delta}{2})\subset B(\bar{x}_0,\delta)$, we see by the strong convergence of $x_k\rightarrow \bar{x}_0$ that
	\begin{align*}
		\norm{B_{x_k}-B_{\bar{x}_0}}_{B(x_k,\frac{\delta}{2})}\conv{k\rightarrow \infty}0.
	\end{align*}
	In particular, if $k$ is large enough such that
	\begin{align*}
		\norm{B(x_k)(x)-B_{\bar{x}_0}(x)}\leq \frac{1}{2}\;\, \text{for all}\;\, x\in B\left(x_k,\frac{\delta}{2}\right),
	\end{align*}
	we deduce by \eqref{s2} that
	\begin{align*}
		\norm{x-B_{x_k}(x)}<1\;\, \text{for all}\;\, x\in B\left(x_k,\frac{\delta}{2}\right).
	\end{align*}
	In particular, we can define $\varphi_{x_k}(x)=\sqrt{B_{x_k}(x)}x$ for all $x\in B(x,\frac{\delta}{2})$, and we see that in particular $d\varphi_k(x_k)=\mathrm{Id}$. Now, as 
	\begin{align*}
		\norm{\varphi_{x_k}-\varphi_{\bar{x}_0}}_{C^1(B(x_k,\frac{\delta}{2}))}\conv{k\rightarrow\infty}0,
	\end{align*}
	and as the neighbourhood around which $\varphi_{x_k}$ is invertible depends only on the local behaviour of its derivative around $x_k$ and as $\varphi_{\bar{x}_0}$ is invertible in $B(\bar{x}_0,\delta)$, we deduce that for $k$ large enough, $\varphi_k$ is invertible on $B(x_k,\frac{\delta}{4})$, so the Morse lemma implies that
	\begin{align*}
		F_{\sigma_k}(\varphi_k(x))=\beta(\sigma_k)+\norm{x_+^k}^2-\norm{x^k_-}^2\;\,\text{for all}\;\, x\in B\left(x_k,\frac{\delta}{4}\right)
	\end{align*}
	In particular, $F_{\sigma_k}$ has only one critical point on $B(\bar{x}_0,\frac{\delta}{8})\subset B(x_k,\frac{\delta}{4})$ for $k$ large enough. Therefore, we can apply the Proposition \ref{ext} to $F_{\sigma_k}$ with $\delta>0$ and $\epsilon>0$ \emph{independent of $k$}.
	
	As $K=\ens{\bar{x}_0,\bar{x}_1,\cdots,\bar{x}_m}$ is finite, we saw that for all $k$ sufficiently large, $F_{\sigma_k}$ has at most one critical point in $B(x_i,\frac{\delta}{8})$. Let us denote by $K_{\beta(\sigma_k)}\cap U_{\delta/8}(K)=\ens{x_0^k,x_1^k,\cdots,x_{m_k}^k}$ where $m_k\leq m$ the critical points of $F_{\sigma_k}$ at level $\beta(\sigma_k)$. Thanks of Proposition \ref{ext} and the first part of the proof, there exists some $\delta>0$ independent of $k$ such that for all ${A}\in \mathscr{A}$ such that $\sup F_{\sigma_k}(A)\leq \beta(\sigma)+\delta$, then for all $1\leq i\leq m_{k}$, there exists $A'_i\in \mathscr{A}$ 
	such that
	\begin{align}\label{p3}
		\left\{\begin{alignedat}{1}
			&A\setminus U_{2\epsilon}(x_i^k)=A_i'\setminus U_{2\epsilon}(x_i^k)\\
			&A_i'\cap U_{\epsilon}(x_i^k)=\varnothing\\
			&\sup F_{\sigma_k}(A_i')\leq \sup F_{\sigma_k}(A).
		\end{alignedat}\right.
	\end{align}
	Furthermore, as the $x_i^k$ are uniformly isolated independently of $k$, taking $\epsilon>0$ small enough, we can assume that
	\begin{align}
		&U_{2\epsilon}(x_i^k)\cap U_{2\epsilon}(x_j^k)=\varnothing\quad  \text{for all }\;\,1\leq i\neq j\leq m_k \label{2.24}.
	\end{align}
	and that if $x_i^k\conv{k\rightarrow \infty}x_j\in K$ (for some $j\in \ens{1,\cdots,m}$) that $k$ is large enough such that 
	\begin{align*}
		U_{\frac{\epsilon}{2}}(x_j)\subset U_{\epsilon}(x_i^k).
	\end{align*}
	We also remark that  
	\begin{align}\label{defu}
		U_{\frac{\epsilon}{2}}(K)=\bigcup_{i=1}^nU_{\frac{\epsilon}{2}}(x_i)
	\end{align}
	is an open neighbourhood of $K=K_{\beta(\sigma)}\cap A_{\infty}\cap \mathscr{E}(\sigma)$.
	Now, let $\ens{\sigma_k}\subset (\sigma,\infty)$ such that $\sigma_k\conv{k\rightarrow \infty}\sigma$ and $\ens{A_k}_{k\in \N}$ such that
	\begin{align*}
		\sup F_{\sigma_k}(A_k)\leq \beta(\sigma_k)+(\sigma_k-\sigma)\conv{k\rightarrow'\infty}\beta(\sigma).
	\end{align*}
	In particular, there exists $k_0\in \N$ such that for all $k\geq k_0$, there holds
	\begin{align*}
		\sup F_{\sigma_k}(A_k)\leq \beta(\sigma)+\delta.
	\end{align*}
	We can also assume as $K$ is isolated in $K_{\beta(\sigma)}\cap \mathscr{E}(\sigma)$ and thanks of the first part of the proof that $\epsilon>0$ is small enough such that
	\begin{align}\label{pn}
		A_k\cap U_{\frac{\epsilon}{2}}\left((K_{\beta(\sigma)}\cap \mathscr{E}(\sigma))\setminus K\right)=\varnothing\;\, \text{for all}\;\, k \;\, \text{large enough}.
	\end{align}
	Now, define by induction a finite sequence (recall that $m_k\leq m$) $A^0_k,A_k^1,\cdots,A_k^{m_k}\in \mathscr{A}$ by $A_k^0=A_k$, $A_k^1=(A_k^0)_1'=(A_k)_0'$, 
	\begin{align*}
		A_k^j=(A_k^{j-1})'_j
	\end{align*}
	using the notation of \eqref{p3}. We see in particular that by \eqref{p3} and \eqref{2.24}
	\begin{align}\label{com1}
		A_k^j\cap U_{\epsilon}(x_i^k)=\varnothing\;\, \text{for all}\;\, 1\leq i\leq j\leq m_k.
	\end{align}
	Furthermore, as for all $1\leq  j\leq m_k$, we have by \eqref{p3}
	\begin{align}\label{com2}
		\sup F_{\sigma_k}(A_k^j)\leq \sup F_{\sigma_k}(A_k^{j-1})
	\end{align}
	so by combining \eqref{pn}, \eqref{defu} with \eqref{com1} and \eqref{com2}, we deduce that for all $k\geq k_0$, we have
	\begin{align}\label{contradiction}
		&A^{m_k}_k\cap U_{\frac{\epsilon}{2}}(K_{\beta(\sigma)}\cap \mathscr{E}(\sigma))=\varnothing\\
		& \beta(\sigma)\leq \sup F_{\sigma_k}(A_k^{m_k})\leq \sup F_{\sigma_k}(A_k)\leq \beta(\sigma_k)+(\sigma_k-\sigma)\conv{k\rightarrow \infty}\beta(\sigma).
	\end{align}
	By Theorem \ref{real} there exists a sequence $\ens{x_k}_{k\in \N}\subset X$ such that
	\begin{align}\label{cont}
		\left\{\begin{alignedat}{1}
			(1)\;\,&\mathrm{dist}(x_k,A_k^{m_k})\conv{k\rightarrow \infty}0\\
			(2)\;\,&\beta(\sigma)-(\sigma_k-\sigma)\leq F_{\sigma}(x_k)\leq F_{\sigma_k}(x_k)\leq \beta(\sigma_k)+(\sigma_k-\sigma)\\
			(3)\;\,&\norm{DF_{\sigma}(x_k)}\conv{k\rightarrow \infty}0\\
			(4)\;\,&\inf_{k\in \N}F(x_k)>0.
		\end{alignedat}\right.
	\end{align}
	Therefore, by the Palais-Smale condition at level $\beta(\sigma)$ and \eqref{cont}, up to a subsequence we have $x_k\conv{k\rightarrow \infty}x_{\infty}\in K_{\beta(\sigma)}\cap \mathscr{E}(\sigma)$. However, we have for all $k$ large enough by \eqref{contradiction} and as $\mathrm{dist}(x_k,A_k^{m_k})\conv{k\rightarrow \infty}0$
	\begin{align*}
		\mathrm{dist}(A_k^{m_k},K_{\beta(\sigma)}\cap \mathscr{E}(\sigma))\geq \frac{\epsilon}{4},
	\end{align*}
	and this contradicts the fact that $x_{\infty}\in K_{\beta(\sigma)}\cap \mathscr{E}(\sigma)$. This concludes the proof of the theorem.
\end{proof}

\subsection{Marino-Prodi perturbation method and the degenerate case}

Let us recall the main theorem here.

\begin{theorem}\label{indexbound2}
	Let $(X,\norm{\,\cdot\,}_X)$ be a $C^2$ Finsler manifold modelled on a Banach space $E$, and $Y\hookrightarrow X$ be a $C^2$ Finsler-Hilbert manifold modelled on a Hilbert space $H$ which we suppose locally Lipschitz embedded in $X$, 
	and let $F,G\in C^2(X,\R_+)$ be two fixed functions. Define for all $\sigma>0$, $F_\sigma=F+\sigma^2G\in C^2(X,\R_+)$ and suppose that the following conditions hold.
	\begin{enumerate}
		\item[\emph{(1)}] \emph{\textbf{Palais-Smale condition:}} For all $\sigma>0$, the function $F_{\sigma}:X\rightarrow Y$ satisfies the Palais-Smale condition at all positive level $c>0$.
		\item[\emph{(2)}] \emph{\textbf{Energy bound:}} The following energy bound condition holds : for all $\sigma>0$ and for all $\ens{x_k}_{k\in \N}\subset X$ such that 
		\begin{align*}
		\sup_{k\in \N}F_{\sigma}(x_k)<\infty,
		\end{align*}
		we have
		\begin{align*}
		\sup_{k\in \N}\norm{\D G(x_k)}<\infty.
		\end{align*}
		\item[\emph{(3)}] \textbf{\emph{Fredholm property:}} For all $\sigma>0$ and for all $x\in K(F_{\sigma})$, we have $x\in Y$, and the second derivative $D^2F_{\sigma}(x):T_xX\rightarrow T_x^{\ast}X$ restrict on the Hilbert space $T_xY$ such that the linear map $\D^2F_{\sigma}(x)\in \mathscr{L}(T_yY)$ defined by
		\begin{align*}
		D^2F_{\sigma}(x)(v,v)=\s{\D^2F_{\sigma}(x)v}{v}_{Y,x},\quad \text{for all}\;\, v\in T_xY,
		\end{align*}
		is a \emph{Fredholm} operator, and the embedding $T_xY\hookrightarrow T_xX$ is dense for the Finsler  norm $\norm{\,\cdot\,}_{X,x}$.
	\end{enumerate}
	Now, let $\mathscr{A}$ \emph{(}resp. $\mathscr{A}^{\ast}$, resp. $\bar{\mathscr{A}}$, resp. $\mathscr{A}(\alpha_{\ast})$, resp. $\mathscr{A}(\alpha^{\ast})$, where the last two families depend respectively on a homology class $\alpha_{\ast}\in H_d(Y,B)$ - where $B\subset Y$ is a fixed compact subset -  and a cohomology class $\alpha^{\ast}\in H^d  (Y)$\emph{)} be a $d$-dimensional admissible family of $Y$ (resp. a $d$-dimension dual family to $\mathscr{A}$, resp. a $d$-dimensional co-dual family to $\mathscr{A}$, resp. a $d$-dimensional homological family, resp. a $d$-dimensional co-homological family) with boundary $\ens{C_i}_{i\in I}\subset Y$.
	Define for all $\sigma>0$
	\begin{align*}
	\begin{alignedat}{3}
	&\beta(\sigma)=\inf_{A\in\mathscr{A}}\sup F_\sigma(A)<\infty,\quad &&
	\beta^{\ast}(\sigma)=\inf_{A\in \mathscr{A}^{\ast}}\sup F_{\sigma}(A),
	\quad&&\widetilde{\beta}(\sigma)=\inf_{A\in \widetilde{\mathscr{A}}}\sup F_{\sigma}(A)\\
	&\bar{\beta}(\sigma)=\inf_{A\in \mathscr{A}(\alpha_{\ast})}\sup F_{\sigma}(A),\quad&& 
	\underline{\beta}(\sigma)=\inf_{A\in \mathscr{A}(\alpha^{\ast})}\sup F_{\sigma}(A).
	\end{alignedat}
	\end{align*} 
	Assuming that the min-max value is non-trivial, \textit{i.e.}
	\begin{enumerate}\label{non-trivial}
		\item[\emph{(4)}] \textbf{\emph{Non-trivialilty:}} $\displaystyle \beta_0=\inf_{A\in \mathscr{A}}\sup F(A)>\sup_{i\in I} \sup F(C_i)=\widehat{\beta}_0$, 
	\end{enumerate}
	there exists a sequence $\ens{\sigma_k}_{k\in \N}\subset (0,\infty)$ such that $\sigma_k\conv{k\rightarrow \infty}0$, and for all $k\in \N$, there exists a critical point $x_k\in K(F_{\sigma_k})\in \mathscr{E}(\sigma_k)$ \emph{(}resp. $x_k^{\ast},\widetilde{x}_k,\bar{x}_{k},\underline{x}_k\in \mathscr{E}(\sigma_k)$\emph{)} of $F_{\sigma_k}$ satisfying the entropy condition \eqref{boltzmann} and such that respectively
	\begin{align*}
	\left\{
	\begin{alignedat}{2}
	&F_{\sigma_k}(x_k)=\beta(\sigma_k),\quad&&
	\mathrm{Ind}_{F_{\sigma_k}}(x_k)\leq d\\
	&F_{\sigma_k}(x_k^{\ast})=\beta^{\ast}(\sigma_k),\quad&&
	\mathrm{Ind}_{F_{\sigma_k}}(x_{k})\geq d		\\
	&F_{\sigma_k}(\widetilde{x}_k)=\widetilde{\beta}(\sigma_k),\quad&&
	\mathrm{Ind}_{F_{\sigma_k}}(\widetilde{x}_k)\leq d\leq \mathrm{Ind}_{F_{\sigma_k}}(\widetilde{x}_k)+\mathrm{Null}_{F_{\sigma_k}}(\widetilde{x}_k)
	\\
	&F_{\sigma_k}(\bar{x}_k)=\bar{\beta}(\sigma_k),\quad&&
	\mathrm{Ind}_{F_{\sigma_k}}(\bar{x}_k)\leq d\leq \mathrm{Ind}_{F_{\sigma_k}}(\bar{x}_k)+\mathrm{Null}_{F_{\sigma_k}}(\bar{x}_k)
	\\
	&F_{\sigma_k}(\underline{x}_k)=\underline{\beta}(\sigma_k),\quad&&
	\mathrm{Ind}_{F_{\sigma_k}}(\underline{x}_k)\leq d\leq \mathrm{Ind}_{F_{\sigma_k}}(\underline{x}_k)+\mathrm{Null}_{F_{\sigma_k}}(\underline{x}_k).
	\end{alignedat}\right.
	\end{align*}
\end{theorem}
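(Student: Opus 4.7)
The plan is to reduce to the non-degenerate case already handled by Theorem \ref{ndeg}, via a frozen Marino-Prodi perturbation that preserves the monotone family structure. Since $\tau \mapsto F_\tau(x) = F(x) + \tau^2 G(x)$ is non-decreasing in $\tau$ for every $x$, each of the five min-max values $\beta, \beta^{\ast}, \widetilde{\beta}, \bar{\beta}, \underline{\beta}$ is non-decreasing; by the integration trick recalled in Section 4.1, each of them is differentiable and satisfies the entropy condition on a subset of $(0,1)$ dense at $0$. It therefore suffices to show that for every $\sigma > 0$ small enough in this dense set, there exists a critical point of $F_\sigma$ at the prescribed level, lying in $\mathscr{E}(\sigma)$, with the announced index bound; letting $\sigma\to 0$ along the dense set then furnishes the sequence $\ens{\sigma_k}_{k\in\N}$.

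Fix such a $\sigma$. Theorem \ref{real}, combined with hypotheses (1) and (3), produces a nonempty compact critical set
\[
K = K(F_\sigma,\beta(\sigma)) \cap A_\infty \cap \mathscr{E}(\sigma) \subset Y
\]
for a suitable min-maximising sequence $\ens{A_j}_{j\in\N} \subset \mathscr{A}$. Proposition \ref{propre} provides $\delta > 0$ such that any $C^2$-small perturbation of $F_\sigma$ is proper on $N_\delta(K)$. I then apply Proposition \ref{marinoprodi} to produce, for each $n$, a function $\phi_n \in C^2(Y,\R)$ with $\supp(\phi_n) \subset N_{2\delta}(K)$, $\phi_n \leq 0$, $\norm{\phi_n}_{C^2} \leq 1/n$, such that $F_\sigma + \phi_n$ has only finitely many, all non-degenerate, critical points inside $N_\delta(K)$. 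The crucial design choice is to \emph{freeze} this perturbation independently of $\tau$ and set
\[
\widetilde{F}_{n,\tau} = F + \tau^2 G + \phi_n,
\]
so that $\partial_\tau \widetilde{F}_{n,\tau} = 2\tau G$ is unchanged and the set $\mathscr{E}(\tau)$ has the same meaning for both families. The perturbed family inherits hypotheses (1)--(4): Palais-Smale on $N_\delta(K)$ by the corollary to Proposition \ref{propre} and globally since $\phi_n$ is compactly supported; Fredholmness by openness of the Fredholm property among bounded operators; non-triviality because $\supp(\phi_n)$ avoids the boundary $\ens{C_i}_{i\in I}$ for $\delta$ small. Its min-max $\widetilde{\beta}_n$ is non-decreasing with $\norm{\widetilde{\beta}_n - \beta}_{\infty} \leq 1/n$, so the entropy condition holds densely, and I may select $\sigma_n \to \sigma$ at which $\widetilde{\beta}_n$ is differentiable and satisfies the entropy condition.

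Next I apply Theorem \ref{ndeg} to $\widetilde{F}_{n,\cdot}$ at $\sigma_n$. Using $\ens{A_j}$ as the almost-min-maximising sequence, the limit set still contains $K$, and any critical point of $\widetilde{F}_{n,\sigma_n}$ in $A_\infty$ at the level $\widetilde{\beta}_n(\sigma_n) \to \beta(\sigma)$ must lie in $N_\delta(K)$ for $n$ large by Palais-Smale compactness. There, $\widetilde{F}_{n,\sigma_n} = (F_\sigma + \phi_n) + (\sigma_n^2 - \sigma^2) G$ is $C^2$-close (on the compact set $N_{2\delta}(K)$ where $G$ is bounded in $C^2$) to the non-degenerate function $F_\sigma + \phi_n$, hence also non-degenerate by openness of invertibility for self-adjoint Fredholm operators. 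Theorem \ref{ndeg} thus produces $y_n \in K(\widetilde{F}_{n,\sigma_n},\widetilde{\beta}_n(\sigma_n)) \cap \mathscr{E}(\sigma_n)$ satisfying the respective index bound; non-degeneracy of $y_n$ moreover upgrades the co-dual, homological, and cohomological estimates to equality $\mathrm{Ind} = d$ since $\mathrm{Null}(y_n) = 0$.

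Finally, I pass to the limit $n \to \infty$: with $\sigma_n \to \sigma$, $\phi_n \to 0$ in $C^2$, and $y_n \in N_\delta(K)$, Palais-Smale for $F_\sigma$ extracts $y_n \to y \in K(F_\sigma,\beta(\sigma)) \cap \mathscr{E}(\sigma)$, the entropy bound $\sigma_n^2 G(y_n) \leq \frac{1}{\log(1/\sigma_n)\log\log(1/\sigma_n)}$ passing to the limit by continuity of $G$. For the index, $C^2$ convergence $\D^2 \widetilde{F}_{n,\sigma_n}(y_n) \to \D^2 F_\sigma(y)$ of self-adjoint Fredholm operators yields continuous variation of eigenvalues with multiplicity, hence lower semi-continuity of $\mathrm{Ind}$ and upper semi-continuity of $\mathrm{Ind} + \mathrm{Null}$, preserving all four announced estimates. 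The main obstacle is precisely the synchronisation in the third paragraph: Marino-Prodi is a single-function perturbation, yet Theorem \ref{ndeg} needs the full monotone family; the facts that $\phi_n$ can be taken independent of $\tau$ \emph{and} that the perturbed monotone function $\widetilde{\beta}_n$ retains a dense set of entropy points arbitrarily close to $\sigma$ are exactly what allows the reduction without destroying the entropy condition that \cite{lazer} did not have to worry about.
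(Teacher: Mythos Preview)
Your overall strategy---freeze a Marino--Prodi perturbation in $\tau$, apply Theorem~\ref{ndeg} to the perturbed family, then pass to the limit via lower semi-continuity of the index and upper semi-continuity of index plus nullity---is the same as the paper's, and you correctly identify the synchronisation between the perturbation and the entropy condition as the crux.

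The gap is the sentence ``the entropy condition holds densely, and I may select $\sigma_n \to \sigma$ at which $\widetilde{\beta}_n$ is differentiable and satisfies the entropy condition.'' Section~4.1 only shows that a non-decreasing function satisfies the entropy inequality along \emph{some} sequence tending to $0$; it gives no density near a fixed $\sigma>0$. From $\norm{\widetilde\beta_n-\beta}_\infty\le 1/n$ you get no pointwise control on $\widetilde\beta_n'$ near $\sigma$. What is missing is an averaging step: since $\beta$ is differentiable at $\sigma$, for any $h_n\to 0$ with $nh_n\to\infty$,
\[
\frac{1}{2h_n}\int_{\sigma-h_n}^{\sigma+h_n}\widetilde\beta_n'(\tau)\,d\tau
\;\le\;\frac{\beta(\sigma+h_n)-\beta(\sigma-h_n)+2/n}{2h_n}
\;=\;\beta'(\sigma)+o(1),
\]
so there is $\sigma_n\in[\sigma-h_n,\sigma+h_n]$ with $\widetilde\beta_n'(\sigma_n)\le\beta'(\sigma)+o(1)$. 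To conclude $\widetilde\beta_n'(\sigma_n)\le \big(\sigma_n\log(1/\sigma_n)\log\log(1/\sigma_n)\big)^{-1}$ you must therefore have chosen $\sigma$ with \emph{strict} inequality in the entropy condition; this is available since Section~4.1 in fact gives $\liminf_{\sigma\to 0}\beta'(\sigma)\,\sigma\log(1/\sigma)\log\log(1/\sigma)=0$, but you should say so. With this insertion your route goes through.

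The paper handles this synchronisation differently and more globally: it partitions $(0,1)$ into intervals $I_j=[\tfrac{1}{j+1},\tfrac{1}{j}]$, uses a Bertrand-series lemma (Lemma~\ref{t}) to find infinitely many $j$ with $\beta(a_j)-\beta(a_{j+1})$ controlled by $b_j=\big((j{+}1)\log j\,\log\log j\,\log\log\log j\big)^{-1}$, covers each $I_j$ by finitely many balls $B(\sigma_i,\delta(\sigma_i))$ on which a frozen perturbation $\varphi_{\sigma_i}\langle y,\cdot\rangle$ keeps the family non-degenerate (the paper's Claim~1), and then shows that for $\norm{y}\le b_j/(N_j{+}1)$ the perturbed width $\beta(\cdot,y)$ still satisfies the entropy inequality on a set of measure at least $(1-2\delta_j)\,\mathscr{L}^1(I_j)$ in $I_j$. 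Only after extracting $\sigma(y_k)\to\sigma_\infty^j\in I_j$ as $y_k\to 0$ does it land on a definite limiting parameter. The paper's route avoids pre-selecting $\sigma$ and the strict-inequality caveat, at the price of the piecewise perturbation \eqref{enddef} and the covering/measure estimate; yours is structurally simpler once the averaging argument above is made explicit.
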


\begin{proof}
	As we have mentioned already, we can assume that $X$ is a Finsler-Hilbert manifold modelled on a Hilbert space $H$. Take $\sigma>0$ such that $\beta$ satisfies the entropy condition at $\sigma$. If $F_{\sigma}$ has only non-degenerate critical points in $K_{\beta(\sigma)}\cap \mathscr{E}(\sigma)$, then we are done.
	
	\begin{lemme}\label{t}
		Let $\ens{a_j}_{j\in \N}\subset [0,\infty)$ and $\ens{b_j}_{j\in \N}\subset (0,\infty)$ be two sequences such that
		\begin{align*}
			\sum_{j\in \N}^{}a_j<\infty\quad \text{and}\;\, \sum_{j\in \N}^{}b_j=\infty.
		\end{align*}
		Then there holds
		\begin{align*}
			\liminf_{j\rightarrow \infty}\frac{a_j}{b_j}=0.
		\end{align*}
	\end{lemme}
\begin{proof}
	By contradiction, let $\delta>0$ such that
	\begin{align*}
		\liminf\limits_{j\rightarrow \infty}\frac{a_j}{b_j}=\delta.
	\end{align*}
	Then there exists $J\in \N$ such that for all $j\geq J$, 
	\begin{align*}
		\frac{a_j}{b_j}\geq \frac{\delta}{2},
	\end{align*}
	so that for all $j\geq J$, there holds $\delta\, b_j\leq a_j$. Therefore, we obtain
	\begin{align*}
		\sum_{j\geq J}b_j\leq \frac{1}{\delta}\sum_{j\geq J}^{}a_j<\infty,
	\end{align*}
	contradicting the divergence of $\sum b_j$.
\end{proof}

	Let $\ens{a_j}_{j\in\N}\subset (0,1)$ be a strictly decreasing sequence  converging to zero. Then there holds as $\beta$ is increasing for all $j\in\N$
	\begin{align*}
		\int_{a_{j+1}}^{a_j}\beta'(\sigma)d\sigma\leq \beta(a_j)-\beta(a_{j+1})
	\end{align*}
	and we notice that
	\begin{align*}
		\sum_{j=0}^{n}\left(\beta(a_j)-\beta(a_{j+1})\right)= \beta(a_0)-\beta(a_{n+1})\conv{n\rightarrow\infty}\beta(a_0)-\beta(0)<\infty.
	\end{align*}
	This implies that
	\begin{align*}
		\sum_{j\in \N}^{}\left(\beta(a_j)-\beta(a_{j+1})\right)<\infty.
	\end{align*}
	Therefore, if $b=\ens{b_j}_{j\in\N}$ is a the general term of a divergent series with positive terms,  there exists by Lemma \ref{t} a subsequence $\ens{j_l}_{l\in \N}$ such that for all $l\in \N$, there holds
	\begin{align}\label{ineq}
		\beta(a_{j_l})-\beta(a_{j_l+1})\leq b_{j_l}.
	\end{align}
	Now, for convenience of notation, as we do not use any properties related to the convergence of the series of general term $\ens{b_{j_l}}_{l\in \N}$, we will assume that \eqref{ineq} holds for all $j\in \N$.
	Now, we want to find such sequence $\ens{a_j}_{j\in \N}$ and $\ens{b_j}_{j\in \N}$ such that
	\begin{align*}
		(a_j-a_{j+1})^{-1}b_j\leq \frac{1}{a_j\log\left(\frac{1}{a_j}\right)\log\log\left(\frac{1}{a_j}\right)\log\log\log\left(\frac{1}{a_j}\right)}
	\end{align*}
	Take $a_j=\dfrac{1}{j}$, we have $a_j-a_{j+1}=\dfrac{1}{j(j+1)}$, so the condition becomes for $j\geq 4\cdot 10^{6}>e^{e^e}$
	\begin{align}\label{rhs2}
		b_j\leq \frac{1}{(j+1)\log(j)\log\log(j)\log\log\log(j)}, 
	\end{align}
	and the series whose general term is the right-hand side of \eqref{rhs2} diverges so we define $\ens{b_j}_{j\in \N}\subset (0,\infty)^{\N}$ such that for all $j\geq J\geq 4\cdot 10^{6}>e^{e^e}$
	\begin{align*}
		b_j=\frac{1}{(j+1)\log(j)\log\log(j)\log\log\log(j)}.
	\end{align*}
	Now, for all $j\geq J$, let $I_j=[a_{j+1},a_j]$ and
	\begin{align*}
		A_j=I_j\cap\ens{\sigma:\beta'(\sigma)\leq \frac{1}{a_j\log\left(\frac{1}{a_j}\right)\log\log\left(\frac{1}{a_j}\right)}\leq \frac{1}{\sigma\log\left(\frac{1}{\sigma}\right)\log\log\left(\frac{1}{\sigma}\right)}},
	\end{align*}
	and define $\delta_j$ for $j\geq J$ by
	\begin{align*}
		\delta_j=\frac{1}{\log\log\log(j)}\conv{j\rightarrow \infty}0.
	\end{align*}
	Then for all $j\geq J$, there holds by \eqref{ineq}
	\begin{align*}
		\int_{a_{j+1}}^{a_j}\beta'(\sigma)d\sigma\leq \frac{\delta_j(a_j-a_{j+1})}{a_j\log(\frac{1}{a_j})\log\log(\frac{1}{a_j})}
	\end{align*}
	so that
	\begin{align*}
		\frac{\mathscr{L}^1(I_j\setminus A_j)}{a_j\log\left(\frac{1}{a_j}\right)\log\log\left(\frac{1}{a_j}\right)}\leq \int_{I_j\setminus A_j}\beta'(\sigma)d\sigma\leq \int_{I_j}\beta'(\sigma)d\sigma\leq \frac{\delta_j\mathscr{L}^1(I_j)}{a_j\log\left(\frac{1}{a_j}\right)\log\log\left(\frac{1}{a_j}\right)}
	\end{align*}
	so that
	\begin{align}\label{entropy3}
		\frac{\mathscr{L}^1(I_j\setminus A_j)}{\mathscr{L}^1(I_j)}\leq \delta_j\conv{j\rightarrow \infty}0.
	\end{align}
	Therefore, we obtain for all $j\geq J$ some element $\sigma_j\in (a_{j+1},a_j)$ such that
	\begin{align*}
		\beta'(\sigma_j)\leq \frac{\beta(a_j)-\beta(a_{j+1})}{a_j-a_{j+1}}\leq \frac{1}{a_j\log(\frac{1}{a_j})\log\log(\frac{1}{a_j})}\leq \frac{1}{\sigma_j\log(\frac{1}{\sigma_j})\log\log(\frac{1}{\sigma_j})}.
	\end{align*}
	Now, for all $\sigma \in (0,1)$, as $K(F_{\sigma})$ is compact, we let $\varphi_{\sigma}$ be the cut-off function given by Proposition \ref{marinoprodi} and let $\epsilon(\sigma)>0$ such that for all $\norm{y}< \epsilon(\sigma)$ small enough such that by Proposition \ref{propre}, the map
	\begin{align}\label{coff}
		F_{\sigma,y}=F_{\sigma}+\varphi_{\sigma}\s{y}{\,\cdot\,}
	\end{align}
	is proper on $N_{2\delta}(K)$. Now, fix some $C>0$. 
	
	\textbf{Claim 1}:  there exists $\delta(C,\sigma)>0$ (taken such that $\delta(C,\sigma)<\epsilon(\sigma)$) such that for all $|\tau-\sigma|<\delta(\sigma)$, the map 
	\begin{align*}
		F^{\tau}_{\sigma,y}=F_{\sigma,y}+(\tau^2-\sigma^2)G=F_{\tau}+\varphi_{\sigma}\s{y}{\,\cdot\,}
	\end{align*}
	is such that
	\begin{align}\label{inclusion}
		K(F^{\tau}_{\sigma,y})\cap\ens{x:F_{\sigma,y}^{\tau}(x)\leq C}\subset K(F_{\sigma})^{\delta},
	\end{align}
	and that for all $y$ such that $F_{\sigma,y}$ is non-degenerate (such $y$ form a non-meager subset by Sard-Smale theorem and Proposition \ref{marinoprodi}), the map $F^{\tau}_{\sigma,y}$ has only non-degenerate critical points below the critical level $C>0$ (in practise we can just take $C=\beta(1)+1$, but $C=\beta(\sigma_0)+\eta$ for some $\sigma_0,\eta>0$ would work equally well).

	First, observe that $F_{\sigma,y}$ has no critical points in $X\setminus K(F_{\sigma})^{2\delta}$, as $F_{\sigma,y}=F_{\sigma}$ in $X\setminus K(F_{\sigma})^{2\delta}$. Now, by contradiction, assume that  there exists $\ens{\tau_k}_{k\in \N}$ such that $\tau_k\rightarrow \sigma$ and a sequence of critical points $\ens{x_k}_{k\in \N}\subset X$ (\textit{i.e.} such that $x_k\in K(F_{\sigma,y}^{\tau_k})\cap\ens{x:F_{\sigma}^{\tau}(x)\leq C}$ for all $k\in \N$) and
	\begin{align*}
		\mathrm{dist}(x_k,K(F_{\sigma}))\geq \delta.
	\end{align*}
	Then, by the same proof \textit{mutadis mutandis} of $(6.9)$ of Proposition $6.3$ in \cite{geodesics}, we have thanks of the condition $(2)$ on the energy bound that
	\begin{align*}
		\norm{\D F_{\sigma,y}^{\tau_k}(x_k)-\D F_{\sigma,y}}=(\tau_k^2-\sigma^2)\norm{\D G(x_k)}\conv{k\rightarrow \infty}0
	\end{align*}
	Now, if $x_k\in K(F_{\sigma})^{2\delta}$ for $k$ large enough, as $\D F_{\sigma,y}$ is proper on $K(F_{\sigma})^{2\delta}$, we deduce that up to a subsequence, we have 
	\begin{align*}
		x_k\conv{k\rightarrow \infty}x_{\infty}\in K(F_{\sigma,y})\subset K(F_{\sigma})^{\delta},
	\end{align*}
	a contradiction. Therefore, we can assume that 
	for all $k\in \N$) and
	\begin{align*}
		\mathrm{dist}(x_k,K(F_{\sigma}))\geq 2\delta.
	\end{align*}
	Furthermore, as $F^{\tau_k}_{\sigma,y}=F_{\tau_k}$ and $F_{\sigma,y}=F_{\sigma}$ on $X\setminus K(F_{\sigma})^{2\delta}$,  we have
	\begin{align*}
		F_{\sigma}(x_k)\leq F_{\tau_k}(x_k)=F^{\tau_k}_{\sigma,y}(x_k)\leq C.
	\end{align*}
	Therefore, by the Palais-Smale condition for $F_{\sigma}$, we deduce that up to subsequence, we have $x_k\conv{k\rightarrow \infty}x_{\infty}\in K(F_{\sigma})$, a contradiction. 
	Now, to prove the second part of the claim, by \eqref{inclusion}, if $\tau_k\conv{k\rightarrow \infty}\sigma$ and $\ens{x_k}_{k\in \N}\subset X$ is a sequence of critical points  associated to $\ens{F^{\tau_k}_{\sigma,y}}_{k\in \N}$ such that
	\begin{align*}
		F_{\sigma,y}^{\tau_k}(x_k)\leq C,
	\end{align*}
	we have by properness of $F_{\sigma,y}$ on $K(F_{\sigma})^{2\delta}$ that (up to a subsequence)  $x_k\conv{k\rightarrow \infty}x_{\infty}\in K(F_{\sigma,y})$. Furthermore, the strong convergence of $\ens{x_k}_{k\in \N}$ towards $x_{\infty}$ shows that
	\begin{align}\label{convnd}
		\norm{\D^2F^{\tau_k}_{\sigma,y}(x_k)-\D^2F_{\sigma,y}(x_{\infty})}\conv{k\rightarrow \infty}0,
	\end{align} 
	as we see these two second order operators defined on the underlying Hilbert space $H\simeq T_{x_k}X\simeq T_{x_{\infty}}X$.
	Now, we recall the following continuity property of the spectrum for bounded linear operators on a Hilbert space $H$, which we state below.
	\begin{enumerate}
	 \item[(P)] For all $T\in \mathscr{L}(H)$, for all $\epsilon>0$, there exists $\delta>0$ such that for all $S\in \mathscr{L}(H)$ such that $\norm{T-S}<\delta$, there holds $\mathrm{Sp}(S)\subset U_{\epsilon}(\mathrm{Sp}(T))$,
	\end{enumerate}
	  where $\mathrm{Sp}(T)\subset \R$ (resp. $\mathrm{Sp}(S)\subset \R$) is the spectrum of $T$ (resp. $S$) and $U_{\epsilon}(\mathrm{Sp}(T))$ is the $\epsilon$-neighbourhood in $\R$ of the compact subset $\mathrm{Sp}(T)\subset \R$. Now, as $0\notin \mathrm{Sp}(\D^2F_{\sigma,y}(x_{\infty}))$, and $\mathrm{Sp}(\D^2F_{\sigma,y}(x_{\infty}))\subset \R$ is compact, there exists $\epsilon>0$ such that 
	\begin{align}\label{p2}
		\mathrm{Sp}(\D^2F_{\sigma,y}(x_{\infty}))\cap (-2\epsilon,2\epsilon)=\varnothing.
	\end{align}
	Thanks of \eqref{convnd}, for $k$ large enough, we have by \eqref{p2}
	\begin{align*}
		\mathrm{Sp}\left(\D^2F_{\sigma,y}^{\tau_k}(x_k)\right)\subset U_{\epsilon}\left(\mathrm{Sp}(\D^2F_{\sigma,y}(x_{\infty}))\right)\subset \R\setminus (-\epsilon,\epsilon)
	\end{align*}
	so that in particular $0\notin \mathrm{Sp}\left(\D^2F_{\sigma,y}^{\tau_k}(x_k)\right)$, and $F^{\tau_k}_{\sigma,y}$ is non-degenerate.
	
	Finally, as $F_{\sigma,y}$ has a finite number of critical points, and all of them are non-degenerate, this argument can be made uniform in $x_{\infty}$ and this complete the proof of the claim.
	
	\textbf{Important remark:} As $F^{\tau}_{\sigma,y}-F_{\sigma,y}=F_{\tau}-F_{\sigma}$ which is independent of $y$, the value $\delta(C,\sigma)$ found previously is independent of $y$ sufficiently small. 
	
	Now, we fix some $C>\beta(1)$ and for all $\sigma\in (0,1)$, we denote $\delta(\sigma)=\delta(C,\sigma)$, and we observe that for all $j\geq J$, there holds
	\begin{align*}
		I_j=\bigcup_{\sigma\in I_j}B(\sigma,\delta(\sigma)).
	\end{align*}
	Therefore, by compactness of $I_j=[a_{j+1},a_j]\subset \R$, there exists $N_j\in \N$ and $\sigma_1,\cdots,\sigma_{N_j}\in I_j$ such that
	\begin{align*}
		I_j\subset \bigcup_{i=1}^{N_j}B(\sigma,\delta(\sigma_i)),
	\end{align*}
	and up to relabelling, we can assume that $a_{j+1}\leq \sigma_1<\sigma_2<\cdots<\sigma_{N_j}\leq a_j$. In particular, must have in particular $\sigma_{i+1}-\sigma_i<\delta(\sigma_i)$ for all $1\leq i\leq N_{j}-1$, while
	$\sigma_1-a_{j+1}<\delta(\sigma_1)$, and $a_{j}-\sigma_{N_j}<\delta_{N_j}$.
	Therefore, we define for all $y\in X$,
	\begin{align}\label{enddef}
		\widetilde{F}_{\sigma,y}=F_{\sigma}+\mathbf{1}_{\ens{a_j\leq \sigma<\sigma_1}}\varphi_{\sigma_1}\s{y}{\cdot}+\sum_{i=1}^{N_j-1}\mathbf{1}_{\ens{\sigma_i\leq \sigma< \sigma_{i+1}}}\varphi_{\sigma_i}\s{y}{\,\cdot\,}+\mathbf{1}_{\ens{\sigma_{N_j}\leq \sigma\leq a_j}}\varphi_{\sigma_{N_j}}\s{y}{\,\cdot\,}
	\end{align}
	where $\varphi_{\sigma_i}$ is the cut-off given by \eqref{coff} from Proposition \ref{marinoprodi}. Now, by notational convenience, we let $\sigma_0=a_{j+1}$ and $\sigma_{N_j+1}=a_j$.
	
	Now, notice that $\beta(\sigma,y)=\beta(\widetilde{F}_{\sigma,y},\mathscr{A})$ is increasing on each interval $[\sigma_i,\sigma_{i+1}]$ for all $0\leq i\leq N_{j}+1$. For all $y\in X$ such that $\Vert y\Vert \leq \epsilon$, we have
	\begin{align*}
		\norm{F_{\sigma}-F_{\sigma,y}}_{C^2(X)}\leq \epsilon.
	\end{align*}
	Therefore, up to replacing $\s{y}{\,\cdot\,}$ by $\s{y}{\,\cdot-x_i}$ for some $x_i\in X$ in each component of the sum on the right-hand side of \eqref{enddef} we can assume that $F_{\sigma,y}\geq F_{\sigma}$, so that
	\begin{align*}
		\beta(\sigma)\leq \beta(\sigma,y)\leq \beta(\sigma)+\epsilon,
	\end{align*}
	and this property of $\sigma \mapsto\beta(\sigma,y)$ implies that
	\begin{align*}
		&\int_{a_{j+1}}^{a_j}\beta'(\sigma,y)d\sigma=\sum_{i=0}^{N_j}\int_{\sigma_i}^{\sigma_{i+1}}\beta'(\sigma,y)d\sigma\leq \sum_{i=0}^{N_j}\left(\beta(\sigma_{i+1},y)-\beta(\sigma_i,y)\right)\leq \sum_{i=0}^{N_j}\left(\beta(\sigma_{i+1})-\beta(\sigma_i)+\epsilon\right)\\
		&=\beta(a_j)-\beta(a_{j+1})+(N_j+1)\epsilon.
	\end{align*}
	Taking
	\begin{align*}
		\epsilon\leq \frac{b_j}{N_j+1}, 
	\end{align*}
	implies that the set 
	\begin{align*}
		A_j(y)=I_j\cap\ens{\sigma:\beta'(\sigma,y)\leq \frac{1}{a_j\log\left(\frac{1}{a_j}\right)\log\log\left(\frac{1}{a_j}\right)}\leq \frac{1}{\sigma\log\left(\frac{1}{\sigma}\right)\log\log\left(\frac{1}{\sigma}\right)}},
	\end{align*}
	verifies by \eqref{entropy3}
	\begin{align*}
		\mathscr{L}^1(A_j)\geq (1-2\delta_j)\mathscr{L}^1(I_j).
	\end{align*}
	In particular, there exists for $j\geq 6\cdot 10^{702}>e^{e^{e^2}}$ an element $\sigma(y)\in I_j$ such that $\widetilde{F}_{\sigma,y}$ verifies the entropy condition at $\sigma(y)$. Furthermore, as $\sigma(y)\in B(\sigma_i,\delta(\sigma_i))$ for some $i\in \ens{1,\cdots,N_j}$, we deduce that $\widetilde{F}_{\sigma(y),y}=F_{\sigma_i,y}$ is non-degenerate and is proper on an open neighbourhood of its critical set at level $\beta(\sigma(y),y)$, so verifies the Palais-Smale condition at this level (recall that $F_{\sigma(y),y}=F_{\sigma_{i},y}$ for some $i\in \ens{0,\cdots, N_j}$, so these properties hold by \textbf{Claim 1}). Furthermore, as 
	\begin{align*}
		\frac{d}{d\sigma}F_{\sigma,y}=\frac{d}{d\sigma}F_{\sigma},
	\end{align*}
	we obtain by Theorem \ref{ndeg} a critical point $x_y\in X$ of $\widetilde{F}_{\sigma(y),y}$ such that 
	\begin{align}\label{end}
		\widetilde{F}_{\sigma(y),y}(x(y))=\beta(\sigma,y), \quad \sigma(y)^2G(\sigma(y))\leq \frac{1}{\log\left(\frac{1}{\sigma(y)}\right)\log\log\left(\frac{1}{\sigma(y)}\right)},\quad\text{and}\;\, \mathrm{Ind}_{F_{\sigma(y),y}}(x(y))\leq d.
	\end{align}
	As the set of $y\in X$ such that $F_{\sigma,y}$ is non-degenerate is dense, we can choose a sequence $\ens{y_k}_{k\in \N}\subset X$ such that $y_k\conv{k\rightarrow \infty}0$, such that $F_{\sigma_i,y_k}$ is non-degenerate for all $1\leq i\leq N_j$ for all $k\in \N$, and $\sigma(y_k)=\sigma_k^j\in I_j$ such that $\widetilde{F}_{\sigma_k,y_k}$ admits a critical point $x(y_k)=x_k\in X$ verifying \eqref{end}. As $I_j$ is compact, we can assume up to a subsequence that $\sigma_k^j\conv{k\rightarrow \infty}\sigma_{\infty}^j\in I_j$, and as 
	\begin{align}\label{boundend}
		\norm{\widetilde{F}_{\sigma_k,y_k}-F_{\sigma_k}}_{C^2(X)}\conv{k\rightarrow \infty}0,
	\end{align}
	we deduce that up to a subsequence,  by the \textbf{Energy bound} $(2)$ and \eqref{boundend}, we have (notice that $\D F_{\sigma_k,y_k}(x_k)=0$)
	\begin{align*}
		\norm{\D F_{\sigma}(x_k)}&\leq \norm{\D F_{\sigma}(x_k)-\D F_{\sigma_k}(x_k)}+\norm{\D F_{\sigma_k}(x_k)}\\
		&=\norm{\D F_{\sigma}(x_k)-\D F_{\sigma_k}(x_k)}+\norm{\D F_{\sigma_k}(x_k)-\D F_{\sigma_k,y_k}(x_k)}\conv{k\rightarrow \infty}0.
	\end{align*}
	Therefore, up to an additional subsequence and by the Palais-Smale condition, we have the strong convergence
	\begin{align*}
		x_k\conv{k\rightarrow \infty}x_{\infty}^j\in K(F_{\sigma_{\infty}^j}).
	\end{align*}
	Finally, by the strong convergence of the second derivative, we have
	\begin{align}\label{1sided}
		\mathrm{Ind}_{F_{\sigma_{\infty}^j}}(x_{\infty}^j)\leq \liminf_{k\rightarrow \infty}\mathrm{Ind}_{F_{\sigma_{k},y_k}}(x_k)
	\end{align} 
	and (notice that by non-degeneracy of $x_k$ for $F_{\sigma_k,y_k}$ that $\mathrm{Null}_{F_{\sigma_k,y_k}}(x_k)=0$)
	\begin{align}\label{2sided}
		\mathrm{Ind}_{F_{\sigma_{\infty}^j}}(x^j_{\infty})+\mathrm{Null}_{F_{\sigma_{\infty}^j}}(x^j_{\infty})\geq \limsup_{k\rightarrow \infty}\left(\mathrm{Ind}_{F_{\sigma_{k},y_k}}(x_k)+\mathrm{Null}_{F_{\sigma_k,y_k}}(x_k)\right)
	\end{align}
	so $x^j_{\infty}$ verifies \eqref{end} for $y=0$ and $\sigma(y)=\sigma_{\infty}^j$. Furthermore, if $\mathscr{A}$ is replaced by a dual family, then the one-sided estimate from below of the index is given by \eqref{2sided} while two sided estimates are given for co-dual, homological or cohomological families by \eqref{1sided} and \eqref{2sided}.
	
	 This concludes the proof of the theorem, as the sequences $\ens{\sigma_{\infty}^j}_{j\in \N}\subset(0,\infty)$ and $\ens{x_{\infty}^j}_{j\in \N}\subset X$ satisfy the conditions of the theorem.
\end{proof}

\nocite{}
\bibliographystyle{plain}
\bibliography{biblio}

\end{document}